\pgfplotsset{compat=1.15}
\newtheorem{thm}{Theorem}
\newtheorem{prop}[thm]{Proposition}
\newtheorem{lem}[thm]{Lemma}
\newtheorem{conj}{Conjecture}
\theoremstyle{definition}
\newtheorem{definition}{Definition}
\theoremstyle{remark}
\newtheorem{remark}{\bf Remark}
\newtheorem{example}{\bf Example}
\newtheorem{prob}{\bf Problem}
\title{Structure of Trees with Respect to Nodal Vertex Sets}
\author{Asghar Bahmani}
\address{Department of Mathematics and Computer Science\\
	Amirkabir University of Technology, Tehran, Iran\\
	{asghar.bahmani@aut.ac.ir}}
\author{Dariush Kiani}
\address{Department of Mathematics and Computer Science\\
Amirkabir University of Technology, Tehran, Iran \&
School of Mathematics,
Institute for Research in Fundamental Sciences (IPM), Tehran, Iran\\
{dkiani@aut.ac.ir}}
\begin{document}

\begin{abstract}
Let $T$ be a tree with a given adjacency  eigenvalue $\lambda$. In this paper, by using the $\lambda$-minimal trees, we determine the structure of trees with a given multiplicity of the eigenvalue $\lambda$. Furthermore, we consider the relationship between the structure of trees and the eigensystem of a given Laplacian eigenvalue.\\% This work is done in a new manner and proofs on some recent works in this topic.\\

\end{abstract}
\keywords{Tree, Eigenvector, Laplacian Matrix, Adjacency Matrix, Tree Structure}
\subjclass[2010]{ 05C05, 05C25, 05C50, 11R04, 15A18}
\maketitle

%\begin{keyword}
%%% keywords here, in the form: keyword \sep keyword  
%Tree,Eigenvector, Laplacian Matrix, Adjacency Matrix, Tree Structure
%%% MSC codes here, in the form: \MSC code \sep code
%\MSC[2010]  05C05, 05C25, 05C31, 05C50 11R04 15A18% 
%
%\end{keyword}

%\end{frontmatter}

%%
% Start line numbering here if you want
%\linenumbers
\section{Introduction}
All graphs in this paper are finite and simple, unless noted otherwise. We denote by $[n]$ the set $\{1,\ldots,n\}$.
For any integer $n\in\mathbb{N}$, we denote by $M_{n}(\mathbb{R})$ and $Sym_{n}(\mathbb{R})$ the set of all $n\times n$ real matrices and   all $n\times n$ symmetric matrices, respectively. The complete graph on $n$ vertices is denoted by $K_n$. 

The Laplacian (signless Laplacian, resp.) matrix of a simple graph $G$, denoted by $L_{G}$ ($Q_{G}$, resp.), is $D(G)-A_{G}$ ($D(G)+A_{G}$, resp.), where $D(G)$ is a diagonal matrix with diagonal entries $(d_{i})_i$ and $A_{G}$ is the adjacency matrix of $G$. 
A vertex $v\in V(G)$, and a subgraph $H$ of $G$, we denote by $N_{H}(v)$ the set $\{u: u\in V(H), uv\in E(G)\}$. The set of all adjacency eigenvalues of $G$ is denoted by $Spec(G)$.  

Let $T$ be a tree. We denote by $\rho(T)$ the spectral radius (the largest eigenvalue) of $T$.
We say a vector $\boldsymbol{x}\in\mathbb{R}^{n}$ is nowhere-zero, if for every $i\in [n]$, $\boldsymbol{x}_{i}\neq 0$.
For any integers $i$, $\bf{e}_{i}$ denotes the vector with a $1$ in the $i^{\rm th}$ coordinate and $0$'s elsewhere.
Let $\theta$ be an algebraic integer with the monic minimal polynomial $f(x)$. The \textit{norm} of $\theta$ is the product of the conjugates of $\theta$ and we denote it by ${\rm Norm}(\theta)$. We say $\theta$ is a \textit{totally real algebraic integer} if every root of $f(x)$ is real. We denote by $\mathtt{TRAI}$, the set of all totally real algebraic integers. We say $\theta$ is a \textit{totally positive algebraic integer} if every root of $f(x)$ is a positive real number. We denote by $\mathtt{TPAI}$, the set of all totally positive algebraic integers. The following theorem states that every element of $\mathtt{TRAI}$ is a tree eigenvalue.

\begin{thm}\cite[Theorem 1]{S1}\label{trai}
Every totally real algebraic integer is an eigenvalue of some finite tree.
\end{thm}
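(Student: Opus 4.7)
The plan is to show that, given a totally real algebraic integer $\theta$ with monic minimal polynomial $f(x)\in\mathbb{Z}[x]$ of degree $n$, one can construct a finite tree $T$ whose characteristic polynomial $\chi_T$ is divisible by $f$; this forces $\theta\in\mathrm{Spec}(T)$. I would proceed by induction on $n$. For the base case $n=1$ we have $\theta\in\mathbb{Z}$: the choice $T=K_2$ handles $\theta=0$, while the star $T=K_{1,k^2}$ handles $\theta=\pm k$ with $k\in\mathbb{Z}_{>0}$, since its spectrum is $\{\pm k,0,\ldots,0\}$.

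For the inductive step, the key combinatorial tool is Schwenk's leaf-deletion identity: if $v$ is a leaf of $T$ with unique neighbor $u$, then
\[
\chi_T(x)=x\,\chi_{T-v}(x)-\chi_{T-v-u}(x).
\]
Iterating this three-term recurrence along a path of successive leaves is exactly a Stieltjes continued-fraction expansion of $\chi_T$, and classical orthogonal-polynomial theory guarantees that any polynomial with $n$ simple real roots is the characteristic polynomial of a weighted path on $n$ vertices (equivalently, of a Jacobi tridiagonal matrix). This suggests trying to write $f(x)=x\,g(x)-h(x)$ with $g,h\in\mathbb{Z}[x]$ themselves realized, by the inductive hypothesis applied to TRAI of smaller degree, as characteristic polynomials of smaller trees that can be glued along a new leaf to produce $T$.

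The main obstacle is controlling \emph{integrality}: the weights produced by the continued-fraction expansion are generically irrational real numbers, whereas an unweighted tree requires a $\{0,1\}$-adjacency matrix. One therefore has to replace each weighted edge by an unweighted subtree gadget (for instance, using the fact that $K_{1,m}$ contributes effective weight $\sqrt{m}$) without disturbing divisibility of $\chi_T$ by~$f$. This is exactly where the total-reality hypothesis must enter non-trivially, because one must stay within $\mathbb{Z}[\theta]$ while choosing each gadget: viewing multiplication by $\theta$ on $\mathbb{Z}[\theta]$ as a self-adjoint operator under the trace pairing, one looks for a $\mathbb{Z}$-basis in which this operator is tree-structured and integer-valued. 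Producing such a basis (or, equivalently, forcing the continued-fraction coefficients to be integer squares via a recursive doubling inside a suitable order of $\mathbb{Q}(\theta)$) is the algebraic heart of the argument and the step I would expect to be the hardest — the combinatorial gluing via Schwenk's formula is then essentially bookkeeping.
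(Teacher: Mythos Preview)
This theorem is not proved in the present paper; it is quoted from Salez~\cite{S1} as background, so there is no proof here against which to compare your attempt.

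Assessing your sketch on its own merits, the Schwenk/continued-fraction intuition is the right starting point and is in fact related to Salez's method, but the inductive scheme you propose has a structural gap. In the decomposition $f(x)=x\,g(x)-h(x)$ dictated by the leaf-deletion identity, $g$ and $h$ are not independent: if $g=\chi_{T'}$ then $h$ must equal $\chi_{T'-u}$ for a \emph{specific} vertex $u$ of the \emph{same} tree $T'$, so you cannot realise $g$ and $h$ by two separate appeals to an inductive hypothesis and then glue. Moreover, $g$ and $h$ are typically reducible, so ``the inductive hypothesis applied to TRAI of smaller degree'' does not apply to them; the induction would have to be formulated over a larger class such as all monic integer polynomials with only real roots, and that statement is simply false (tree spectra are symmetric about~$0$, so for instance $x^{2}+x-2$ is never a tree characteristic polynomial).

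Your final paragraph correctly identifies integrality as the crux but does not resolve it: the remarks about trace pairings and ``recursive doubling inside an order of $\mathbb{Q}(\theta)$'' are hopes, not arguments, and there is no reason multiplication by $\theta$ on $\mathbb{Z}[\theta]$ should admit a basis making it simultaneously symmetric, integral, and acyclic. Salez's actual proof avoids induction on degree altogether: he runs a quotient-type recursion on monic integer polynomials and uses total reality to bound the coefficients uniformly along the orbit, forcing eventual periodicity; the tree is then read off from the resulting cycle. Your outline does not reach that mechanism.
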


In this paper, we define the $\lambda$-minimal (resp., $\mu$-L-minimal for a Laplacian eigenvalue) trees and by using them, we determine the structure of trees with eigenvalue $\lambda$ (resp. $\mu$) of a given multiplicity. There are many papers on considering the structure of trees with a given adjacency eigenvalue $\lambda$. These papers consider the structure of trees and study the relationship between the structure of trees and their eigenvectors and eigenvalues. In these works, the authors consider the zero coordinates of eigenvectors and their corresponding vertices. See \cite{FGWG,Fi}, \cite{GF}, \cite{JLM,JLS,JLSS,MS}, and \cite{S2}.

In this paper, we consider the structure of acyclic matrices and prove results above in a unified and new manner.  Also, as a consequence, we obtain some results on the relationship between the structure of trees and the Laplacian eigenvectors.  
In Section 2, we consider the relationship between the multiplicity of an eigenvalue $\lambda$ of an acyclic matrix and its structure. In Sections 3, we consider the $\lambda$-minimal trees for the adjacency eigenvalues of trees. In Section 4, we define minimal cut-trees for the Laplacian eigenvalues.

\section{The Multiplicity and Nodal Vertex Sets}
Suppose that $M=[m_{ij}]\in Sym_{n}(\mathbb{R})$. We denote by $G_{M}$ the simple graph on $n$ vertices such that for every $i,j\in V(G_{M})$, $i$ and $j$ are adjacent if and only if $m_{ij}\neq 0$. Whenever $G_{M}$ is a tree we denote it by $T_{M}$. If $G_{M}$ is a forest (union of some trees), then we say $M$ is an \textit{acyclic} matrix. For every eigenvalue $\theta$ of $M$, the \textit{nodal vertex set} $\mathcal{N}_{\theta}(M)$ is the set 
$$\{u\in V(G_{M})\,:\,{\text{for every $\theta$-eigenvector}}\,\boldsymbol{x}, \boldsymbol{x}_{u}=0\}.$$
We use the following notations in the sequel:
\begin{itemize}
\item
$\mathcal{N}_{\theta}^{\circ}(M)=\{u\in V(G_{M})\,:\, u\in \mathcal{N}_{\theta}(M), N_{G_{M}}(u)\subseteq \mathcal{N}_{\theta}(M)\}$,
\item
$\partial \mathcal{N}_{\theta}(M)=\mathcal{N}_{\theta}(M)\setminus \mathcal{N}_{\theta}^{\circ}(M)$,
\item
$\mathcal{E}_{\theta}^{\circ}(M)=\{uv\in E(G_{M})\,:\, u,v\in \mathcal{N}_{\theta}(M)\}.$
\end{itemize}

\textbf{Structure of Acyclic Matrices:}
We need the following propositions and lemmas for the main theorem.
\begin{prop}\cite[Proposition 1]{Fi}\label{mul1}
	If $A$ is an  $n\times n$ acyclic irreducible matrix with an eigenvalue $\theta$ and a nowhere-zero $\theta$-eigenvector, then $m_{\theta}(A)=1$. Moreover, If $i\in [n]$ and  $B$ is obtained from $A$ by deleting the row  and column $i$, then $m_{\theta}(B)=0$.
\end{prop}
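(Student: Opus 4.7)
The plan is to prove both assertions by induction on $n=|V(T_A)|$, with the inductive step built around a Schur-complement-type elimination of a leaf. Since $A$ is symmetric, acyclic, and irreducible, $G_A$ is connected and hence a tree, which is the structural hypothesis I will exploit.

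For the multiplicity claim, after disposing of the trivial base $n=1$, I would pick a leaf $v$ with unique neighbour $u$. The eigen-equation at $v$ reads $(\theta-a_{vv})x_v=a_{vu}x_u$; combined with $\boldsymbol{x}$ being nowhere-zero and $a_{vu}\neq 0$, this forces $\theta\neq a_{vv}$ and expresses $x_v$ in terms of $x_u$. I would then introduce the acyclic matrix $\tilde A$ on $T_A - v$, obtained from the principal submatrix $A(v)$ by adding $\frac{a_{vu}^{2}}{\theta-a_{vv}}$ to the $(u,u)$-entry, and verify, by substituting $x_v$ into the equation of $A$ at $u$, that $\boldsymbol{x}|_{T_A-v}$ is a nowhere-zero $\theta$-eigenvector of $\tilde A$; the induction hypothesis then gives $m_{\theta}(\tilde A)=1$. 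Applying the same substitution to an arbitrary $\theta$-eigenvector $\boldsymbol{y}$ of $A$ shows that $\boldsymbol{y}|_{T_A-v}$ is a $\theta$-eigenvector of $\tilde A$, hence equals $c\,\boldsymbol{x}|_{T_A-v}$ for some scalar $c$; the equation at $v$ then yields $y_v=cx_v$, so $\boldsymbol{y}=c\boldsymbol{x}$ and $m_{\theta}(A)=1$.

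For the moreover part, I would fix $i\in[n]$ and suppose for contradiction that $\boldsymbol{z}\neq 0$ is a $\theta$-eigenvector of $B=A(i)$. Extending $\boldsymbol{z}$ by $\tilde z_i:=0$ produces a vector for which the eigen-equations of $A$ hold at every index except possibly $i$, so $(A-\theta I)\tilde{\boldsymbol{z}}=c\,\mathbf{e}_i$ for some scalar $c$. Two cases finish the argument. If $c=0$, then $\tilde{\boldsymbol{z}}$ lies in the one-dimensional $\theta$-eigenspace $\operatorname{span}(\boldsymbol{x})$, and $\tilde z_i=0$ together with $x_i\neq 0$ forces $\tilde{\boldsymbol{z}}=0$, contradicting $\boldsymbol{z}\neq 0$. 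If $c\neq 0$, then $\mathbf{e}_i\in\operatorname{Im}(A-\theta I)=\ker(A-\theta I)^{\perp}=\operatorname{span}(\boldsymbol{x})^{\perp}$, which forces $x_i=0$, again contradicting the nowhere-zero hypothesis.

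The main technical point I anticipate is the leaf-elimination step itself: one must check that $\tilde A$ remains acyclic, irreducible, and carries a nowhere-zero $\theta$-eigenvector so that the induction applies, and one must rule out the degenerate possibility $\theta=a_{vv}$, which is exactly where the nowhere-zero hypothesis is used. Once that reduction is in place, both assertions reduce to short linear-algebra arguments resting on $\dim\ker(A-\theta I)=1$ together with the identification $\operatorname{Im}(A-\theta I)=\ker(A-\theta I)^{\perp}$ afforded by the symmetry of $A$.
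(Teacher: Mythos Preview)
Your argument is correct. The leaf-elimination step is sound: deleting a leaf from a tree leaves a tree, so $\tilde A$ is again acyclic and irreducible; the computation $(\theta-a_{vv})x_v=a_{vu}x_u$ with $x_u,x_v\neq 0$ indeed rules out $\theta=a_{vv}$, and the substitution you describe verifies that $\boldsymbol{x}|_{T_A-v}$ is a nowhere-zero $\theta$-eigenvector of $\tilde A$. The uniqueness then propagates back exactly as you say. For the second assertion, the dichotomy on $c$ is clean and uses only $m_\theta(A)=1$ together with the symmetry of $A$ (to identify $\operatorname{Im}(A-\theta I)$ with $\ker(A-\theta I)^\perp$).

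However, there is nothing to compare against in this paper: the authors do not give their own proof of this proposition but merely cite it as \cite[Proposition~1]{Fi} (Fiedler, 1975). So your proof stands on its own rather than as an alternative to something in the text. For what it is worth, Fiedler's original argument is also inductive on leaf deletion, but phrased in terms of determinant/characteristic-polynomial recursions rather than the Schur-complement modification you use; your version is arguably more transparent because it tracks the eigenvector directly, while Fiedler's yields the interlacing-type statement $m_\theta(B)=0$ more immediately from the polynomial viewpoint. Either way, the structural input (existence of a leaf, nowhere-zero eigenvector forcing $\theta\neq a_{vv}$) is the same.
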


\begin{lem}\label{nonz}
Suppose that $A$ is an acyclic matrix. If $\theta$ is an eigenvalue of $A$, then there exists a $\theta$-eigenvector $\boldsymbol{x}$ such that for every  $u\notin \mathcal{N}_{\theta}(A)$, $\boldsymbol{x}_{u}\neq 0$.
\end{lem}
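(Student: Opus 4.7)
The plan is to give a short vector-space argument that does not actually require acyclicity of $A$; it uses only the definition of $\mathcal{N}_{\theta}(A)$ and the fact that we work over $\mathbb{R}$. First I would fix the eigenspace $E_{\theta}=\ker(\theta I - A)\subseteq\mathbb{R}^{n}$, which is a finite-dimensional real vector space. For every vertex $u\in V(G_{M})\setminus\mathcal{N}_{\theta}(A)$, the definition of the nodal vertex set provides some $\theta$-eigenvector $\boldsymbol{y}^{(u)}\in E_{\theta}$ with $\boldsymbol{y}^{(u)}_{u}\neq 0$.

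The key step is then to pass from ``one eigenvector avoiding each coordinate separately'' to ``one eigenvector avoiding all those coordinates at once''. For each such $u$, introduce the subspace
\[
H_{u}\;=\;\{\boldsymbol{x}\in E_{\theta}\,:\,\boldsymbol{x}_{u}=0\}\;\subseteq\; E_{\theta}.
\]
The existence of $\boldsymbol{y}^{(u)}$ shows that $H_{u}$ is a \emph{proper} subspace of $E_{\theta}$. Since $V(G_{M})$ is finite, the collection $\{H_{u}\}_{u\notin\mathcal{N}_{\theta}(A)}$ is a finite family of proper subspaces. I would then invoke the standard fact that a vector space over an infinite field (here $\mathbb{R}$) is never the union of finitely many proper subspaces, and conclude that
\[
E_{\theta}\,\setminus\,\bigcup_{u\notin\mathcal{N}_{\theta}(A)}H_{u}\;\neq\;\varnothing.
\]
Any $\boldsymbol{x}$ chosen from this nonempty set is a $\theta$-eigenvector of $A$ satisfying $\boldsymbol{x}_{u}\neq 0$ for every $u\notin\mathcal{N}_{\theta}(A)$, which is exactly the conclusion of the lemma.

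If one prefers a more constructive version of the same idea, one can fix a basis $\boldsymbol{x}_{1},\ldots,\boldsymbol{x}_{k}$ of $E_{\theta}$ and consider a generic linear combination $\boldsymbol{x}=c_{1}\boldsymbol{x}_{1}+\cdots+c_{k}\boldsymbol{x}_{k}$; for each $u\notin\mathcal{N}_{\theta}(A)$ the condition $\boldsymbol{x}_{u}=0$ is a nontrivial linear equation in $(c_{1},\ldots,c_{k})$, and since $\mathbb{R}^{k}$ is not a finite union of proper hyperplanes, generic coefficients work. I do not foresee a serious obstacle here; the only mild point is to notice, from the definition of $\mathcal{N}_{\theta}(A)$, that each $H_{u}$ is strictly smaller than $E_{\theta}$, which is precisely what rules out a pathological case where every eigenvector vanishes at some $u$ outside the nodal vertex set.
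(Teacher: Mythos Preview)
Your argument is correct and rests on the same idea as the paper's proof: the eigenspace $E_{\theta}$ cannot be covered by the finitely many proper coordinate-hyperplanes $H_{u}$, so some $\theta$-eigenvector avoids them all. The paper carries this out by an explicit iterative combination (choosing $c$ so that $\boldsymbol{\alpha}+c\boldsymbol{\beta}$ has support equal to the union of the supports of $\boldsymbol{\alpha}$ and $\boldsymbol{\beta}$, then repeating across a basis of $E_{\theta}$), which is exactly your ``constructive version''; your observation that acyclicity plays no role is also accurate.
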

\begin{proof}
	Suppose that $m_{\theta}(A)=k$ and $\boldsymbol{x_1},\ldots,\boldsymbol{x_k}$ are $k$ orthogonal $\theta$-eigenvectors.
	If $\boldsymbol{\alpha} , \boldsymbol{\beta}\in\mathbb{R}^{n}$, then we can choose $c\in\mathbb{R}$ such that for the vector $\boldsymbol{\gamma}=\boldsymbol{\alpha}+c\boldsymbol{\beta}$, $\boldsymbol{\gamma}_i=0$ if and only if $\boldsymbol{\alpha}_i=\boldsymbol{\beta}_i=0$, $i\in [n]$. Therefore, we choose some real numbers $c_1,\ldots,c_{k}$ and put
	$\boldsymbol{x}=\sum_{i=1}^{k}c_i \boldsymbol{x_i}$ such that $\boldsymbol{x}_{u}\neq 0$ if $u\notin \mathcal{N}_{\theta}(A)$.

\end{proof}

\begin{lem}\cite[Lemma 9]{BK2}\label{mul01}
	Let $\mathcal{H}$ and $\mathcal{L}$ be two symmetric matrices with row and column indices $I$ and $J$, respectively. Suppose that $m_{\mathcal{H}}(\lambda)=1$ and $\mathcal{G}$ is the  symmetric matrix given below,
	\begin{center}
		$\mathcal{G}=
		\begin{blockarray}{cc|c|c}
		\begin{block}{c(c|c|c)}
		I  & \mathcal{H}   & \pmb{x} & A\\\cline{1-4}
		v & \pmb{x}^{T} &a & \pmb{y}   \\\cline{1-4}
		J & A^{T}& \pmb{y}^{T} &  \mathcal{L}  \\
		\end{block}
		\end{blockarray},$
	\end{center}
	for a matrix $A$ and a vector $\pmb{x}$. If $A^{T}\pmb{\alpha}=\pmb{0}$ and $\pmb{x}^{T}\pmb{\alpha}\neq 0$, for a $\lambda$-eigenvector $\pmb{\alpha}$ of $\mathcal{H}$, then $m_{\mathcal{G}-{\{v\}}}(\lambda)=m_{\mathcal{G}}(\lambda)+1$.
\end{lem}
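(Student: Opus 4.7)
The plan is to exhibit an $(m_{\mathcal{G}}(\lambda)+1)$-dimensional space of $\lambda$-eigenvectors of $\mathcal{G}-\{v\}$ and invoke Cauchy interlacing for the matching upper bound. Writing a generic vector in $\mathbb{R}^{|I|+1+|J|}$ as $\boldsymbol{\gamma}=(\boldsymbol{\gamma_1};\gamma_v;\boldsymbol{\gamma_2})$, the equation $\mathcal{G}\boldsymbol{\gamma}=\lambda\boldsymbol{\gamma}$ splits into three block equations
$$\mathcal{H}\boldsymbol{\gamma_1}+\boldsymbol{x}\gamma_v+A\boldsymbol{\gamma_2}=\lambda\boldsymbol{\gamma_1},\qquad \boldsymbol{x}^T\boldsymbol{\gamma_1}+a\gamma_v+\boldsymbol{y}\boldsymbol{\gamma_2}=\lambda\gamma_v,\qquad A^T\boldsymbol{\gamma_1}+\boldsymbol{y}^T\gamma_v+\mathcal{L}\boldsymbol{\gamma_2}=\lambda\boldsymbol{\gamma_2},$$
which will be the workhorse throughout.

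First I would show that every $\lambda$-eigenvector $\boldsymbol{\gamma}$ of $\mathcal{G}$ satisfies $\gamma_v=0$. To do this, pair the first block equation with $\boldsymbol{\alpha}^T$ from the left. Since $\boldsymbol{\alpha}^T\mathcal{H}=\lambda\boldsymbol{\alpha}^T$ (symmetry of $\mathcal{H}$ together with $\mathcal{H}\boldsymbol{\alpha}=\lambda\boldsymbol{\alpha}$) and $\boldsymbol{\alpha}^T A=(A^T\boldsymbol{\alpha})^T=\boldsymbol{0}^T$ by hypothesis, everything collapses to $(\boldsymbol{x}^T\boldsymbol{\alpha})\,\gamma_v=0$; as $\boldsymbol{x}^T\boldsymbol{\alpha}\neq 0$, this forces $\gamma_v=0$.

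Next I would produce the extra eigenvector. Set $\boldsymbol{\beta}=(\boldsymbol{\alpha};\boldsymbol{0})\in\mathbb{R}^{|I|+|J|}$; a direct block calculation using $\mathcal{H}\boldsymbol{\alpha}=\lambda\boldsymbol{\alpha}$ and $A^T\boldsymbol{\alpha}=\boldsymbol{0}$ gives $(\mathcal{G}-\{v\})\boldsymbol{\beta}=\lambda\boldsymbol{\beta}$. Crucially, $\boldsymbol{\beta}$ does \emph{not} arise by deleting the $v$-th coordinate of any $\lambda$-eigenvector of $\mathcal{G}$: such an eigenvector would have to be $(\boldsymbol{\alpha};0;\boldsymbol{0})$ by the first step, but substituting this into the middle block equation forces $\boldsymbol{x}^T\boldsymbol{\alpha}=0$, contradicting the hypothesis. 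Here the assumption $m_{\mathcal{H}}(\lambda)=1$ makes the choice of $\boldsymbol{\alpha}$ (up to scalar) canonical.

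To conclude, let $\pi$ denote the linear map that deletes the $v$-th coordinate. By the first step, $\pi$ restricts to an injection $\ker(\mathcal{G}-\lambda I)\hookrightarrow\ker((\mathcal{G}-\{v\})-\lambda I)$; adjoining $\boldsymbol{\beta}$ (which lies outside the image by the preceding paragraph) furnishes an $(m_{\mathcal{G}}(\lambda)+1)$-dimensional subspace of $\lambda$-eigenvectors of $\mathcal{G}-\{v\}$, so $m_{\mathcal{G}-\{v\}}(\lambda)\geq m_{\mathcal{G}}(\lambda)+1$. The reverse inequality is immediate from Cauchy interlacing applied to the principal submatrix obtained by deleting the row and column indexed by $v$, and equality follows. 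The main obstacle is the first step: it hinges delicately on the hypothesis $A^T\boldsymbol{\alpha}=\boldsymbol{0}$, without which a residual term $\boldsymbol{\alpha}^T A\boldsymbol{\gamma_2}$ would remain in the inner-product identity and $\gamma_v$ need not vanish.
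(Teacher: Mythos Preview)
Your argument is correct. The paper itself does not supply a proof of this lemma---it is quoted verbatim from \cite[Lemma 9]{BK2}---so there is no in-paper proof to compare against. That said, your first step (showing $\gamma_v=0$ for every $\lambda$-eigenvector of $\mathcal{G}$ by pairing the top block equation with $\boldsymbol{\alpha}^T$) is precisely the fact the paper records in Remark~\ref{linkzero} as coming from the proof of \cite[Lemma 8]{BK2}, so your approach is aligned with the source. The remaining steps---the injection $\pi:\ker(\mathcal{G}-\lambda I)\hookrightarrow\ker((\mathcal{G}-\{v\})-\lambda I)$, the explicit extra eigenvector $\boldsymbol{\beta}=(\boldsymbol{\alpha};\boldsymbol{0})$ lying outside the image of $\pi$, and the Cauchy interlacing upper bound---are all sound. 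One minor remark: the hypothesis $m_{\mathcal{H}}(\lambda)=1$ is not actually used in your argument (nor does it need to be, given the other hypotheses on $\boldsymbol{\alpha}$); your aside that it makes $\boldsymbol{\alpha}$ canonical is true but inessential to the logic.
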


\begin{remark}\label{linkzero}
	In the proof of \cite[Lemma 8]{BK2}, it is shown that for every $\lambda$-eigenvector $\bf{\xi}$ of $\mathcal{G}$, ${\bf{\xi}}_{v}=0$.
\end{remark}

\begin{prop}\cite[Corollary 10]{BK2}\label{adj11}
	Let  $H$ and $L$  be  two  vertex disjoint (weighted) graphs such that $m_{H}(\theta)=1$, for some $\theta\in \mathbb{R}$. Suppose that $G$ is a graph formed  by joining  an arbitrary vertex $v\in V(L)$ to some arbitrary vertices of $H$. If for a $\theta$-eigenvector $\boldsymbol{x}$ of $H$, $\sum_{u\in N_{H}(v)} \boldsymbol{x}_{u}\neq 0$, then $m_{L-v}(\theta)=m_{G}(\theta)$.
	\begin{figure}[H]
		\centering
\begin{tikzpicture}[scale=.6,line cap=round,line join=round,>=triangle 45,x=1.0cm,y=1.0cm]
\draw [line width=1.pt] (-3.5,0.84)-- (-5.2,1.54);
\draw [line width=1.pt] (-3.5,0.84)-- (-5.28,0.38);
\draw [line width=1.pt] (-5.44,0.94) circle (1.5cm);
\draw (-5.48,-1.26) node[anchor=north west] {$L$};
\draw (-1.66,-0.86) node[anchor=north west] {$H$};
\draw [line width=1.pt] (-1.38,0.85) circle (1.5cm);
\draw [line width=1.pt] (-3.5,0.84)-- (-1.78,1.44);
\draw [line width=1.pt] (-3.5,0.84)-- (-1.76,0.24);
\draw [line width=1.pt] (-5.04,0.94) circle (2.02cm);
\draw (-6.66,1.36) node[anchor=north west] {$L-v$};
\draw (-2.04,1.68) node[anchor=north west] {$\vdots$};
\draw (-5.16,1.78) node[anchor=north west] {$\vdots$};
\begin{scriptsize}
\draw [fill=red] (-3.5,0.84) circle (2.5pt);
\draw[color=blue] (-3.56,1.21) node {$v$};
\end{scriptsize}
\end{tikzpicture}
		\caption{The graph $G$ of Proposition \ref{adj11}. }
	\end{figure}
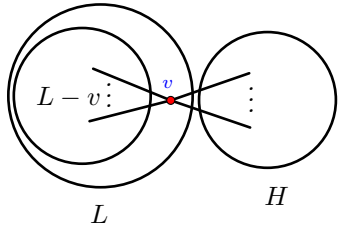
\end{prop}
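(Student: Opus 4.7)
The plan is to apply Lemma \ref{mul01} directly to the weighted adjacency matrix $\mathcal{G}$ of $G$, decomposed with respect to the ordered vertex partition $V(H)$, $\{v\}$, $V(L)\setminus\{v\}$. Under this decomposition, the top-left block $\mathcal{H}$ is the weighted adjacency matrix of $H$ (so $m_{\mathcal{H}}(\theta)=1$ by assumption), the bottom-right block $\mathcal{L}$ is the weighted adjacency matrix of $L-v$, and the off-diagonal block $A$ connecting $V(H)$ with $V(L)\setminus\{v\}$ is the zero matrix, because every edge of $G$ between $H$ and $L$ is incident to $v$ by construction.

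Next I would identify the remaining ingredients of Lemma \ref{mul01}: the vector $\pmb{x}$ is the restriction to $V(H)$ of the $v$-column of $\mathcal{G}$, supported exactly on $N_H(v)$ with entries equal to the edge weights $w(uv)$; $\pmb{y}$ is the analogous restriction to $V(L)\setminus\{v\}$; and $a$ is the $(v,v)$ diagonal entry. Taking $\pmb{\alpha}$ to be the given $\theta$-eigenvector $\boldsymbol{x}$ of $H$, the condition $A^{T}\pmb{\alpha}=\pmb{0}$ holds automatically because $A=0$, while $\pmb{x}^{T}\pmb{\alpha}$ equals the weighted sum $\sum_{u\in N_H(v)} w(uv)\,\boldsymbol{x}_u$, which reduces to $\sum_{u\in N_H(v)}\boldsymbol{x}_u$ in the unweighted joining case and is nonzero by hypothesis.

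Both premises of Lemma \ref{mul01} being met, it yields $m_{\mathcal{G}-\{v\}}(\theta)=m_{\mathcal{G}}(\theta)+1$. Since $\mathcal{G}-\{v\}$ is block-diagonal with blocks $\mathcal{H}$ and $\mathcal{L}$, we have $m_{\mathcal{G}-\{v\}}(\theta)=m_{H}(\theta)+m_{L-v}(\theta)=1+m_{L-v}(\theta)$. Combining the two equalities gives $m_{L-v}(\theta)=m_{G}(\theta)$, as required. I expect no substantive obstacle: the only step demanding care is verifying that the block $A$ vanishes in the chosen decomposition, and this is immediate from the description of how $G$ is assembled from $H$, $L$, and the edges at $v$; after that, the conclusion is a one-line subtraction.
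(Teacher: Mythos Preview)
Your proof is correct and follows exactly the route one would expect: since the paper does not prove this proposition but cites it as \cite[Corollary~10]{BK2}, with Lemma~\ref{mul01} being \cite[Lemma~9]{BK2}, the intended derivation is precisely the direct specialization of Lemma~\ref{mul01} that you carry out. Your identification of the blocks (in particular that $A=0$ because all $H$--$L$ edges pass through $v$) and the final subtraction using $m_{\mathcal{G}-\{v\}}(\theta)=m_H(\theta)+m_{L-v}(\theta)$ are exactly right.
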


The following elementary equation is important for us in the sequel. For any matrix $M=[m_{ij}]\in Sym_{n}(\mathbb{R})$ with an eigenvalue $\theta$ and a $\theta$-eigenvector $\boldsymbol{x}$, for every $i\in [n]$, we have
\begin{equation}\label{eigeneqn}
(\theta-m_{ii})\boldsymbol{x}_{i}=\sum_{j\neq i}m_{ij}\boldsymbol{x}_{j}=\sum_{ji\in E(G_{M})}m_{ij}\boldsymbol{x}_{j}.
\end{equation}

\textbf{Connection Between Two Matrices}: Let $A=[a_{ij}]\in Sym_{n}(\mathbb{R})$ and $B=[b_{ij}]\in Sym_{m}(\mathbb{R})$ for some $m,n\in\mathbb{N}$. We define a matrix $C$ as a connection between $A$ and $B$ for some $r\in[n], s\in[m]$ and a nonzero real number $\omega$, as follows:
\[
C=\begin{blockarray}{ccccccc}
\begin{block}{c(ccc|ccc)}
&&&&\bf{0}&\vdots&\bf{0}\\
r&&\bf{A}&&\vdots&\text{\small $\omega$}&\vdots\\
&&&&\bf{0}&\vdots&\bf{0}\\\cline{2-7}
&\bf{0}&\vdots&\bf{0}&&&\\
n+s&\vdots&\text{\small $\omega$}&\vdots&&\bf{B}&\\
&\bf{0}&\vdots&\bf{0}&&&\\
\end{block}
\end{blockarray}\,, \text{ where }
c_{ij}=\begin{cases}
a_{ij}& i,j\in [n],\\
b_{(i-n)(j-n)} & n+1\leq i,j\leq n+m,\\
\omega & i=r,j=s+n,\\
\omega & j=r,i=s+n,\\
0& \text{otherwise.}
\end{cases}
\]

\textbf{Acyclic Irreducible Matrix Construction:} Suppose that $\theta\in\mathbb{R}$. First, we define three subsets of symmetric matrices:
\begin{itemize}
	\item $\mathfrak{Z}_{\theta}=\{A: G_{A}\text{ is a tree,}\, m_{A}(\theta)=0\}\subset \bigcup_{n\geq 1}Sym_{n}(\mathbb{R})$,
	\item $\mathfrak{Min}_{\theta}=\{A: G_{A}\text{ is a tree and $A$ has a nowhere-zero $\theta$-eigenvector}\}\subset \bigcup_{n\geq 1}Sym_{n}(\mathbb{R})$,
	\item $\mathfrak{Link}_{\theta}=\{A=[a]_{1\times 1}: a\in\mathbb{R}\}$.
\end{itemize} 
By Proposition \ref{mul1} every element of $\mathfrak{Min}_{\theta}$ has the eigenvalue $\theta$ with the multiplicity 1.  
Now, we make an acyclic irreducible matrix by connecting some elements of $\mathfrak{Z}_{\theta}\bigcup\mathfrak{Min}_{\theta}\bigcup\mathfrak{Link}_{\theta}$ such that
\begin{enumerate}
	\item the resulting matrix is acyclic and irreducible,
	\item for any selected matrices  $\mathbf{m},\mathbf{m'}\in \mathfrak{Z}_{\theta}\bigcup\mathfrak{Min}_{\theta}$, we do not connect them,
	\item every chosen element of $\mathfrak{Link}_{\theta}$ (as a linking vertex) is connected to at least two elements of $\mathfrak{Min}_{\theta}$.
\end{enumerate}
We define the set $\mathcal{A}_{\theta}$ as\\
$
\mathcal{A}_{\theta}:=\{ P(\displaystyle\bigoplus_{i=1}^{k}A_i)P^{T}:\text{$P$ is a permutation matrix, $k\in\mathbb{N}$, and $A_i$ is obtained by the construction above} \},
$
where
\begin{center}
$
\displaystyle\bigoplus_{i=1}^{k}A_i :=\begin{bmatrix}
A_1& \bf{0} & \cdots &\bf{0}  \\ 
\bf{0} & A_2  &\cdots &\bf{0} \\ 
\vdots & \vdots & \ddots & \vdots \\ 
\bf{0} & \bf{0} &\cdots  & A_k
\end{bmatrix}.
$
\end{center}

In the following schematic figure, $\mathbf{m}_{i}\in \mathfrak{Z}_{\theta}\bigcup\mathfrak{Min}_{\theta}$, $i\in [n]$.

\begin{figure}[H]
\centering
\begin{tikzpicture}[scale=.7,line cap=round,line join=round,>=triangle 45,x=1.0cm,y=1.0cm]

\draw [line width=1.pt] (0.34,-1.08) circle (1.2cm);
\draw [line width=1.pt] (5.6,0.4) circle (1.2cm);
\draw [line width=1.pt] (9.46,-0.04) circle (1.2cm);
\draw [line width=1.pt] (1.26,-4.96) circle (1.2cm);
\draw [line width=1.pt] (-2.56,2.4) circle (1.2cm);
\draw [line width=1.pt] (8.82,-3.04) circle (1.2cm);
\draw [line width=1.pt] (6.08,-3.84) circle (1.2cm);
\draw [line width=1.pt] (5.98,3.98) circle (1.2cm);
\draw [line width=1.pt] (3.12,4.08) circle (1.2cm);
\draw [line width=1.pt] (9.14,3.13) circle (1.2cm);
\draw [line width=1.pt] (-3.44,-0.47) circle (1.2cm);
\draw [line width=1.pt] (-1.48,-5.09) circle (1.2cm);
\draw [line width=1.pt] (-3.24,-3.15) circle (1.2cm);
\draw [line width=1.pt] (-1.12,0.64)-- (-2.12,1.98);
\draw [line width=1.pt] (-1.12,0.64)-- (-2.82,-0.34);
\draw [line width=1.pt] (-1.12,0.64)-- (-0.02,-0.6);
\draw [line width=1.pt] (-0.46,-2.92)-- (-2.52,-2.94);
\draw [line width=1.pt] (-0.46,-2.92)-- (-1.04,-4.62);
\draw [line width=1.pt] (-0.46,-2.92)-- (0.86,-4.36);
\draw [line width=1.pt] (-0.46,-2.92)-- (0.14,-1.66);
\draw [line width=1.pt] (1.82,-0.74)-- (1.14,-0.94);
\draw [line width=1.pt] (1.82,-0.74)-- (2.44,-0.6);
\draw [line width=1.pt] (6.,-0.1)-- (6.7,-1.66);
\draw [line width=1.pt] (6.7,-1.66)-- (6.14,-3.54);
\draw [line width=1.pt] (6.7,-1.66)-- (8.34,-2.76);
\draw [line width=1.pt] (7.76,1.18)-- (6.46,0.84);
\draw [line width=1.pt] (7.76,1.18)-- (8.76,0.4);
\draw [line width=1.pt] (7.76,1.18)-- (8.48,2.42);
\draw [line width=1.pt] (4.78,2.46)-- (5.4,1.1);
\draw [line width=1.pt] (4.78,2.46)-- (5.84,3.4);
\draw [line width=1.pt] (4.78,2.46)-- (3.34,3.4);
\draw [line width=1.pt] (3.44,-0.36)-- (4.96,0.06);
\draw (2.4,-0.24) node[anchor=north west] {\LARGE $\cdots$};
\draw (-2.86,2.8) node[anchor=north west] {$\mathbf{m}_1$};
\draw (-3.94,-0.14) node[anchor=north west] {$\mathbf{m}_2$};
\draw (-3.7,-2.86) node[anchor=north west] {$\mathbf{m}_3$};
\draw (-1.8,-4.76) node[anchor=north west] {$\mathbf{m}_4$};
\draw (0.9,-4.56) node[anchor=north west] {$\mathbf{m}_5$};
\draw (0.0,-0.82) node[anchor=north west] {$\mathbf{m}_6$};

\draw (-0.56,3.99) node[anchor=north west] {$\mathbf{m}_7$};
\draw (1.0,1.72)  node[anchor=north west] {$\mathbf{m}_8$};

\draw (5.74,-3.64) node[anchor=north west] {$\mathbf{m}_n$};
\draw (8.26,-2.76) node[anchor=north west] {$\mathbf{m}_{n-1}$};
\draw (9.02,0.28) node[anchor=north west] {$\mathbf{m}_{n-2}$};
\draw (8.4,3.38) node[anchor=north west] {$\mathbf{m}_{n-3}$};
\draw (5.22,4.3) node[anchor=north west] {$\mathbf{m}_{n-4}$};
\draw (2.54,4.46) node[anchor=north west] {$\mathbf{m}_{n-5}$};
\draw (4.92,0.64) node[anchor=north west] {$\mathbf{m}_{n-6}$};

\draw [line width=1.pt] (-1.12,0.64)-- (-0.28,1.99);
\draw [line width=1.pt] (-0.28,1.99)-- (-0.28,3.34);
\draw [line width=1.pt] (-0.28,1.99)-- (0.58,1.56);
\draw [line width=1.pt] (-0.28,3.8) circle (1.2cm);
\draw [line width=1.pt] (1.28,1.52) circle (1.2cm);

\begin{scriptsize}

\draw[color=black] (-0.28,1.94)  node {\LARGE\textbullet};
\draw[color=black] (-0.28,1.99)  node {\Large$\bigodot$};
\draw[color=blue] (-0.21,1.6) node {\Large$\mathfrak{s}$};

\draw[color=black] (-1.12,0.60)  node {\LARGE\textbullet};
\draw[color=black] (-1.12,0.64)  node {\Large$\bigodot$};
\draw[color=blue] (-1.08,1.11) node {\Large$\mathfrak{u}$};
\draw [fill=red] (-0.46,-2.97) node {\LARGE\textbullet};
\draw [fill=red] (-0.46,-2.92)  node {\Large$\bigodot$};
\draw[color=blue] (-0.42,-2.45) node {\Large$\mathfrak{v}$};
\draw [fill=red] (1.82,-0.78)  node {\LARGE\textbullet};
\draw [fill=red] (1.82,-0.74)  node {\Large$\bigodot$};
\draw[color=blue] (1.90,-0.25) node {\Large$\mathfrak{w}$};
\draw [fill=red] (7.76,1.13) node {\LARGE\textbullet};
\draw [fill=red] (7.76,1.18)  node {\Large$\bigodot$};
\draw[color=blue] (7.8,1.67) node {\Large$\mathfrak{p}$};
\draw [fill=red] (6.7,-1.69) node {\LARGE\textbullet};
\draw [fill=red] (6.7,-1.66)  node {\Large$\bigodot$};
\draw[color=blue] (6.84,-1.19) node {\Large$\mathfrak{q}$};
\draw [fill=red] (4.78,2.41) node {\LARGE\textbullet};
\draw [fill=red] (4.78,2.46)  node {\Large$\bigodot$};
\draw[color=blue] (4.82,2.95) node {\Large$\mathfrak{r}$};
\end{scriptsize}
\end{tikzpicture}
\caption{A tree structure with linking vertices $\mathfrak{s},\mathfrak{u},\mathfrak{v},\mathfrak{w},\ldots,\mathfrak{p},\mathfrak{q},\mathfrak{r}$.}
\end{figure}

\begin{lem}\label{uniqrep}
	Every $\mathbf{m}\in \mathfrak{Min}_{\theta}$ has a unique representation (up to permutation) in $\mathcal{A}_{\theta}$.
\end{lem}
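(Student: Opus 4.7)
The plan is to show that the only valid construction producing $\mathbf{m}$ inside $\mathcal{A}_\theta$ is the trivial one, where $\mathbf{m}$ itself is taken as a single $\mathfrak{Min}_\theta$ block with no links. Since $T_\mathbf{m}$ is connected, every representation $P(\bigoplus_{i=1}^{k}A_i)P^T$ of $\mathbf{m}$ must have $k=1$, and I proceed by strong induction on $|V(T_\mathbf{m})|$, the base case being immediate. Assume for contradiction that $\mathbf{m}\in\mathfrak{Min}_\theta$ admits a non-trivial construction, and let $\mathcal{B}$ denote the \emph{block-link tree} whose vertices are the chosen blocks and links and whose edges are the connections. A vertex/edge count (using that $T_\mathbf{m}$ is a tree) gives $|E(\mathcal{B})|=|V(\mathcal{B})|-1$, so $\mathcal{B}$ is itself a tree. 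By rule (3) every link has degree $\geq 2$ in $\mathcal{B}$, so every leaf of $\mathcal{B}$ is a block; and by rule (1) each leaf block is attached to its unique neighboring link by a single edge, at a single vertex (a second connection between the same block and link would produce a cycle). Let $\alpha$ be the nowhere-zero $\theta$-eigenvector of $\mathbf{m}$, unique up to scalar by Proposition \ref{mul1}.

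First, suppose $\mathcal{B}$ has a leaf $H\in\mathfrak{Min}_\theta$ attached to a link $v$ at $u\in V(H)$. Reading $\mathbf{m}\alpha=\theta\alpha$ on the rows in $V(H)$, whose only external contribution comes from the edge $uv$, one obtains
\[
(H-\theta I)\,\alpha|_{V(H)}=-\omega_{uv}\,\alpha_v\,\mathbf{e}_u.
\]
Since $H\in\mathfrak{Min}_\theta$ gives $\ker(H-\theta I)=\mathrm{span}(\beta)$ with $\beta$ nowhere-zero, pairing both sides with $\beta$ yields $\omega_{uv}\alpha_v\beta_u=0$, and hence $\alpha_v=0$, contradicting that $\alpha$ is nowhere-zero.

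Otherwise every leaf of $\mathcal{B}$ lies in $\mathfrak{Z}_\theta$; pick such a leaf $Z$ attached to a link $v$ at $u\in V(Z)$. Since $\theta\notin Spec(Z)$, $Z-\theta I$ is invertible, so we Schur-complement out $V(Z)$: set
\[
a'_v:=a_v-\omega_{uv}^{2}\,\mathbf{e}_u^{T}(Z-\theta I)^{-1}\mathbf{e}_u,
\]
and let $\mathbf{m}'$ be obtained from $\mathbf{m}$ by deleting the rows/columns indexed by $V(Z)$ and replacing the diagonal at $v$ with $a'_v$. A direct calculation shows that $\alpha|_{V(\mathbf{m})\setminus V(Z)}$ is a nowhere-zero $\theta$-eigenvector of $\mathbf{m}'$, so $\mathbf{m}'\in\mathfrak{Min}_\theta$. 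The original construction transports to $\mathbf{m}'$ by deleting $Z$ and updating the diagonal of the link $v$: rules (1) and (2) are preserved automatically, and rule (3) is preserved because $v$'s $\mathfrak{Min}_\theta$-neighbors are untouched. This inherited construction is still non-trivial, since $v$ originally had degree $\geq 3$ in $\mathcal{B}$ ($Z$ together with the $\geq 2$ $\mathfrak{Min}_\theta$-neighbors required by rule (3)). As $|V(T_{\mathbf{m}'})|<|V(T_\mathbf{m})|$, the induction hypothesis applied to $\mathbf{m}'$ supplies the desired contradiction.

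The main obstacle is this second case: the orthogonality trick of the first case breaks down when the leaf block is in $\mathfrak{Z}_\theta$, since there is no $\theta$-eigenvector of $Z$ to pair against, so one must absorb $Z$ into the link $v$ via a Schur complement and then carefully check that the reduced matrix remains in $\mathfrak{Min}_\theta$ and that its inherited block--link structure still obeys the three defining rules of $\mathcal{A}_\theta$.
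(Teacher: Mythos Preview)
Your argument is correct, and its heart (Case~1) is exactly the paper's proof: the paper invokes Remark~\ref{linkzero} (built on Lemma~\ref{mul01}) to conclude that any $\theta$-eigenvector of $\mathbf{m}$ vanishes at a linking vertex adjacent to a $\mathfrak{Min}_\theta$ leaf block, and your pairing $\beta^{T}(H-\theta I)\alpha|_{V(H)}=0$ is precisely the computation underlying that remark. Your explicit introduction of the block--link tree $\mathcal{B}$ is a clarifying addition over the paper's somewhat terse phrasing.

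Where you diverge is Case~2, which is in fact vacuous, so the Schur-complement reduction and the induction are unnecessary. Rule~(3) forces every link to have at least two $\mathfrak{Min}_\theta$ neighbors in $\mathcal{B}$, and this alone guarantees a $\mathfrak{Min}_\theta$ leaf whenever the construction is non-trivial. Indeed, take a longest path $p_0,p_1,\ldots,p_\ell$ in $\mathcal{B}$; the endpoint $p_0$ is a leaf, hence a block. If $p_0\in\mathfrak{Z}_\theta$, then by rule~(2) its neighbor $p_1$ must be a link, and of $p_1$'s $\geq 2$ neighbors in $\mathfrak{Min}_\theta$ at most one can be $p_2$ (and none is $p_0$), so some $H\in\mathfrak{Min}_\theta$ adjacent to $p_1$ lies off the path; maximality of the path then forces $H$ to be a leaf of $\mathcal{B}$. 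Thus Case~1 always applies, and your proof collapses to the paper's one-step contradiction.
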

\begin{proof}
	By Proposition \ref{mul1}, $\mathbf{m}$ does not have any $\theta$-eigenvector with a zero entry. Suppose that $\bf{\xi}$ is a $\theta$-eigenvector of $\mathbf{m}$. There exists a pendant vertex of $T_{\mathbf{m}}$, say $u$, such that ${\bf{\xi}}_{u}\neq 0$. % (If there does not exist such vertex, we have $\bf{\xi}=\bf{0}$.)
	Now, we can choose the subtree $T^{\prime}$ of $T_{\mathbf{m}}$ containing $u$ such that $\mathbf{m}_{|T^{\prime}}\in \mathcal{A}_{\theta}$  and $T^{\prime}$ is adjacent with exactly one vertex of $T_{\mathbf{m}}-T^{\prime}$, say $v$. By Remark \ref{linkzero}, ${\bf{\xi}}_{v}=0$, a contradiction with Proposition \ref{mul1}. Hence, $\mathbf{m}$ has a unique representation in $\mathcal{A}_{\theta}$.
	
\end{proof}

The following theorem of Fiedler describes a relation between the multiplicity of an eigenvalue and the linking vertices.
\begin{thm}\cite[Theorem 2.4]{Fi}\label{mulnum}
	Let $\theta\in\mathbb{R}$ and $A=(a_{ik})$ be an $n\times n$ acyclic matrix, let $y=(y_{i})$ be a
	$\lambda$-eigenvector of $A$. If there are not two indices $i,k$ such that $a_{ik}\neq 0$ and $y_{i}=y_{k}=0$,
	then 
	\begin{center}
		$m_{\theta}(A)=c+\sum\limits_{k=3}^{n-1}(k-2)s_{k}$,
	\end{center}
	where $c$ is the number
	of components of $G_{A}$ and $s_{k}$,$(k=3,\ldots,n-1)$, is the number
	of those indices $j$ for which $y_{j}=0$ and $a_{jl}\neq 0$ for exactly $k$ indices $l\neq j$. In
	other words, $s_{k}$, is the number of vertices of $G_{A}$ corresponding to zero coordinates
	of $y$ and having degree $k$.
\end{thm}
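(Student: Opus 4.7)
The plan is to proceed by strong induction on $n$, the size of $A$. Since the hypothesis and the formula are both additive over the connected components of $G_A$, I would first reduce to the case where $G_A$ is a single tree so that $c=1$, and it suffices to show $m_\theta(A)=1+\sum_{k\geq 3}(k-2)s_k$. The base case $n=1$ and the case $Z_y:=\{u:y_u=0\}=\emptyset$ are immediate: in the latter, Proposition \ref{mul1} gives $m_\theta(A)=1$ directly.

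Assume instead $Z_y\neq\emptyset$ and pick $j\in Z_y$. First note that $\deg(j)\geq 2$: otherwise $j$ would be pendant with unique neighbour $l$, and the eigenvalue equation at $j$ would force $a_{jl}y_l=(\theta-a_{jj})y_j=0$, hence $y_l=0$, contradicting the independence of $Z_y$. Write $d=\deg(j)$ and label the (nonzero) neighbours $l_1,\ldots,l_d$. Removing $j$ breaks the tree into subtrees $T_1,\ldots,T_d$ with $l_r\in T_r$, and $y|_{V\setminus\{j\}}$ is a $\theta$-eigenvector of $A':=A-j$ whose zero set $Z_y\setminus\{j\}$ is still independent; moreover, since $j$ has no zero-vertex neighbour, every remaining zero vertex keeps its degree. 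Applying the inductive hypothesis to each $A_{T_r}$ with eigenvector $y|_{T_r}$ and summing then yields
\[
m_\theta(A')=d+\sum_{k\geq 3}(k-2)(s_k-[k=d])=2+\sum_{k\geq 3}(k-2)s_k,
\]
which is valid for both $d=2$ and $d\geq 3$. It therefore suffices to prove the Parter-type identity $m_\theta(A)=m_\theta(A')-1$.

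For this, I would analyse the restriction map $\rho\colon\{z\in V_A:z_j=0\}\to V_{A'}$, $z\mapsto z|_{V\setminus\{j\}}$, where $V_A$ and $V_{A'}$ denote the $\theta$-eigenspaces. A direct check of the eigenvalue equations at the $l_r$ shows $\rho$ is well defined and injective, with image $\{w\in V_{A'}:\sum_r a_{jl_r}w_{l_r}=0\}$. The eigenvector $w=y|_{T_1}\oplus 0\oplus\cdots\oplus 0\in V_{A'}$ witnesses that this linear constraint is nontrivial, since $\sum_r a_{jl_r}w_{l_r}=a_{jl_1}y_{l_1}\neq 0$; hence the image has codimension one in $V_{A'}$.

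The heart of the argument, and the step I expect to be hardest to set up cleanly, is showing $\{z\in V_A:z_j=0\}=V_A$, that is, $j\in\mathcal{N}_\theta(A)$. For this, given arbitrary $z\in V_A$, set $w^{(r)}:=z|_{T_r}$; combining the eigenvalue equations of $A$ at the interior vertices of $T_r$ (which agree with those of $A_{T_r}$) and at $l_r$ (where an extra term $a_{l_rj}z_j$ appears) yields
\[
(A_{T_r}-\theta I)\,w^{(r)}=-a_{l_rj}\,z_j\,\mathbf{e}_{l_r}.
\]
Because $A_{T_r}$ is symmetric, $\operatorname{Range}(A_{T_r}-\theta I)=V_{A_{T_r}}^{\perp}$; and since $y|_{T_r}\in V_{A_{T_r}}$ with $(y|_{T_r})_{l_r}=y_{l_r}\neq 0$, we see $\mathbf{e}_{l_r}\notin V_{A_{T_r}}^{\perp}$. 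Hence the right-hand side lies in the range only when $z_j=0$, which forces $j\in\mathcal{N}_\theta(A)$ and completes the induction.
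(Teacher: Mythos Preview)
Your argument is correct. Note, however, that the paper does not actually prove this theorem: it is quoted from Fiedler~\cite{Fi}, and the paper instead proves the generalization Theorem~\ref{figen}(ii), which drops the hypothesis that the zero set of $y$ is independent. Comparing your approach to the proof of Theorem~\ref{figen}: both are inductive, but the inductions are organised differently. You induct on $n$, delete a single zero vertex $j$, and prove the Parter identity $m_\theta(A)=m_\theta(A-j)-1$ from scratch via the range/orthogonality argument $\operatorname{Range}(A_{T_r}-\theta I)=V_{A_{T_r}}^{\perp}$. The paper inducts on $|\partial\mathcal{N}_\theta(A)|$, peels off a maximal subtree $B_1$ on which the eigenvector is nowhere zero together with its unique linking neighbour $v$, and invokes the black-box Proposition~\ref{adj11} (imported from~\cite{BK2}) to obtain $m_\theta(A_1)=m_\theta(C_1)$. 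Your route is more elementary and entirely self-contained, and in fact your range argument essentially reproves the content of Lemma~\ref{mul01}/Remark~\ref{linkzero} in the special case needed. The paper's route, by contrast, simultaneously yields the structural statement $A\in\mathcal{A}_\theta$ and extends to eigenvectors whose zero set is not independent, which your induction as written does not handle (nor does it need to, for this statement).
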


We  generalize Theorem \ref{mulnum} for all cases of eigenvectors.
\begin{thm}\label{figen}
	Let $\theta\in \mathbb{R}$ and $A$ be an acyclic matrix. If $F=(V(G_{A})\setminus \mathcal{N}_{\theta}^{\circ}(A),E(G_{A})\setminus\mathcal{E}_{\theta}^{\circ}(A))$, then 
	
\begin{enumerate}[i.]
	\item $A\in \mathcal{A}_{\theta}$,
	\item \label{mulnumii} $\displaystyle m_{\theta}(A)=c+\displaystyle\sum\limits_{u\in \partial \mathcal{N}_{\theta}(A)}(d_{F}(u)-2)$, where $c$ is the number of components of $F$,
    \item $\displaystyle m_{\theta}(A)=|\{\text{componets of $(F-\partial \mathcal{N}_{\theta}(A))$ } \}|-|\partial \mathcal{N}_{\theta}(A)|$.
\end{enumerate}	

\end{thm}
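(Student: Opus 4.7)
The plan is to produce an explicit $\mathcal{A}_{\theta}$-decomposition of $A$ from its nodal structure for part (i), and then to extract (ii) and (iii) by parameterising the $\theta$-eigenspace in terms of this decomposition.

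I would begin by applying Lemma \ref{nonz} to fix a $\theta$-eigenvector $\boldsymbol{x}$ of $A$ with $\boldsymbol{x}_u\ne 0$ for every $u\notin\mathcal{N}_{\theta}(A)$, and propose the following decomposition: take the components $T_1,\dots,T_k$ of $G_A$ induced on $V(G_A)\setminus\mathcal{N}_{\theta}(A)$ as the $\mathfrak{Min}_{\theta}$-blocks, the components $S_1,\dots,S_{\ell}$ of $G_A$ induced on $\mathcal{N}_{\theta}^{\circ}(A)$ as the $\mathfrak{Z}_{\theta}$-blocks, and the vertices of $\partial\mathcal{N}_{\theta}(A)$ as the linking vertices. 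Equation (\ref{eigeneqn}) restricted to $T_i$, together with the fact that nodal neighbours contribute $0$, shows $\boldsymbol{x}|_{T_i}$ is a nowhere-zero $\theta$-eigenvector of $A|_{T_i}$, so $A|_{T_i}\in\mathfrak{Min}_{\theta}$. Evaluating (\ref{eigeneqn}) at $v\in\partial\mathcal{N}_{\theta}(A)$ gives $\sum_{u\sim v,\,u\notin\mathcal{N}_{\theta}(A)}a_{uv}\boldsymbol{x}_u=0$, which cannot hold with a single nonzero term, so $v$ has at least two non-nodal neighbours; acyclicity puts them in distinct $T_i$'s, verifying condition~(3) of the $\mathcal{A}_{\theta}$-construction. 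Condition~(2) follows because the $T_i$'s and $S_j$'s are components of their respective induced subgraphs, and no interior-nodal vertex can be adjacent to a non-nodal vertex by definition of $\mathcal{N}_{\theta}^{\circ}$.

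The main obstacle is checking that each $A|_{S_j}\in\mathfrak{Z}_{\theta}$, i.e.\ $\theta\notin Spec(A|_{S_j})$. Suppose $\boldsymbol{y}$ is a $\theta$-eigenvector of $A|_{S_j}$; extending by $0$ to $\tilde{\boldsymbol{y}}$ on $V(G_A)$, equation (\ref{eigeneqn}) is automatic on $S_j$ and outside $S_j$ fails only at $u\in\partial\mathcal{N}_{\theta}(A)$ adjacent to $S_j$. The tree structure forces such $u$ to have exactly one neighbour $w\in S_j$, reducing the obstruction to the single equation $y_w=0$. I would therefore produce a $\boldsymbol{y}$ vanishing on $\partial(S_j)=\{w\in S_j:\exists u\in\partial\mathcal{N}_{\theta}(A),\ u\sim w\}$; the extension $\tilde{\boldsymbol{y}}$ is then a $\theta$-eigenvector of $A$ nonzero on $S_j\subseteq\mathcal{N}_{\theta}(A)$, the needed contradiction. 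Such a $\boldsymbol{y}$ I would obtain by strong induction on $|V(G_A)|$: the inductive hypothesis decomposes $A|_{S_j}$ as an element of $\mathcal{A}_{\theta}$ and supplies nowhere-zero $\theta$-eigenvectors on its inner $\mathfrak{Min}_{\theta}$-pieces, which can be recombined through the internal linking constraints to produce one vanishing on $\partial(S_j)$. This is the technical core of the argument.

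With (i) in hand, every $\theta$-eigenvector of $A$ equals $c_i\boldsymbol{x}^{(i)}$ on each $T_i$, where $\boldsymbol{x}^{(i)}$ is the unique nowhere-zero $\theta$-eigenvector of $A|_{T_i}$ (Proposition \ref{mul1}), and is $0$ on $\mathcal{N}_{\theta}(A)$. Equation (\ref{eigeneqn}) at $v\in\partial\mathcal{N}_{\theta}(A)$ then becomes a single linear constraint $\sum_{i:\,T_i\sim v}\alpha_{v,i}\,c_i=0$ with $\alpha_{v,i}=a_{u(v,i)v}\boldsymbol{x}^{(i)}_{u(v,i)}\ne 0$ (here $u(v,i)$ is the unique neighbour of $v$ in $T_i$). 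The bipartite graph $B$ on $\{T_1,\dots,T_k\}\cup\partial\mathcal{N}_{\theta}(A)$ with $T_i\sim v$ whenever they are adjacent in $G_A$ is a forest (obtained from $F$ by contracting each $T_i$) in which every $v$ has degree $\geq 2$; so each component of $B$ has a leaf $T_{i_0}$, and peeling $(T_{i_0},v_{j_0})$ where $v_{j_0}$ is its unique $B$-neighbour (the column of $c_{i_0}$ in the constraint matrix has its only nonzero entry in row $v_{j_0}$) reduces the system and shows by induction that the $p=|\partial\mathcal{N}_{\theta}(A)|$ constraints are linearly independent. Hence $m_{\theta}(A)=k-p$. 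Formula (iii) is immediate because $k$ is exactly the number of components of $F-\partial\mathcal{N}_{\theta}(A)$. For (ii), combining $|V(F)|=\sum|T_i|+p$ and $|E(F)|=\sum(|T_i|-1)+\sum_{v\in\partial\mathcal{N}_{\theta}(A)}d_F(v)$ in the forest identity $c=|V(F)|-|E(F)|$ gives $c=k+p-\sum_{v\in\partial\mathcal{N}_{\theta}(A)}d_F(v)$, which rearranges to $c+\sum_{v\in\partial\mathcal{N}_{\theta}(A)}(d_F(v)-2)=k-p=m_{\theta}(A)$.
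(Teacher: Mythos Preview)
Your route differs substantially from the paper's. The paper argues by induction on $|\partial\mathcal{N}_\theta(A)|$, peeling off one $\mathfrak{Min}_\theta$-block $B_1$ and an adjacent linking vertex $v$ at each step and invoking Proposition~\ref{adj11} to transfer the multiplicity from $A_1$ to the smaller matrix $C_1=A_1\setminus(B_1\cup\{v\})$. You instead read the decomposition directly off the nodal set and then parametrise the $\theta$-eigenspace by scalars $(c_i)$; your leaf-peeling argument for the independence of the linking constraints is clean and gives (ii) and (iii) via the forest identity without ever touching Proposition~\ref{adj11}. This is a pleasant alternative that makes the eigenspace structure explicit.

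The one real gap is exactly where you flag it: the claim $A|_{S_j}\in\mathfrak{Z}_\theta$. Your proposed argument---produce a $\theta$-eigenvector $\boldsymbol{y}$ of $A|_{S_j}$ vanishing on $\partial(S_j)$ and extend by zero---can fail. If, say, $A|_{S_j}$ itself lies in $\mathfrak{Min}_\theta$ (a single nowhere-zero piece, no internal linking vertices), then \emph{every} $\theta$-eigenvector of $A|_{S_j}$ is nowhere-zero and none vanishes on $\partial(S_j)\ne\emptyset$; the inductive ``recombination'' you sketch does nothing to exclude this case. The remedy is to extend outward rather than kill inward: given any $\theta$-eigenvector $\boldsymbol{y}$ of $A|_{S_j}$, set $\xi|_{S_j}=\boldsymbol{y}$, $\xi=0$ on the rest of $\mathcal{N}_\theta(A)$, and $\xi|_{T_i}=c_i\boldsymbol{x}^{(i)}$. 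The only obstruction is at each $u\in\partial\mathcal{N}_\theta(A)$, where one obtains the inhomogeneous equation $\sum_{i:T_i\sim u}\alpha_{u,i}c_i=-[u\sim S_j]\,a_{u,w_u}y_{w_u}$. Your leaf-peeling argument---which does not actually depend on (i), despite the phrase ``with (i) in hand''---already shows the coefficient matrix has full row rank, so this system is solvable for every right-hand side. The resulting $\xi$ is a $\theta$-eigenvector of $A$ nonzero on $S_j\subseteq\mathcal{N}_\theta(A)$, the desired contradiction. Reorder accordingly: establish the constraint independence first, then deduce $A|_{S_j}\in\mathfrak{Z}_\theta$ from surjectivity.
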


\begin{proof} i,ii: Since the statements for $A$ and $PAP^{T}$, for any permutation matrix $P$, are equivalent, without loss of generality suppose that $A=\bigoplus_{i=1}^{k}A_i$ for some integer $k$ and irreducible acyclic matrices $A_1,\ldots,A_k$.
	
	First, if $m_{\theta}(A)=0$, then $A_i\in \mathfrak{Z}_{\theta}$, $i\in[k]$. Therefore, $A\in \mathcal{A}_{\theta}$, $F$ is empty, and $m_{\theta}(A)=0$.
	
	Now, suppose that $m_{\theta}(A)>0$ and by Lemma \ref{nonz}, $\boldsymbol{x}$ is a $\theta$-eigenvector such that for every  $u\notin \mathcal{N}_{\theta}(A)$, $\boldsymbol{x}_{u}\neq 0$.
	
	We prove theorem by induction on $|\partial \mathcal{N}_{\theta}(A)|$. If $|\partial \mathcal{N}_{\theta}(A)|=0$, then for every $i\in [k]$, $\boldsymbol{x}_{|A_i}=\boldsymbol{0}$ or $\boldsymbol{x}_{|A_i}$ is nowhere-zero. Assume that, without loss of generality, $m_{\theta}(A_1)=\cdots=m_{\theta}(A_t)=1$ and $m_{\theta}(A_{t+1})=\cdots=m_{\theta}(A_k)=0$, for some $t\in [k]$. Hence, $m_{\theta}(A)=t$, $A_1,\ldots,A_t \in\mathfrak{Min}_{\theta}$, and $A_{t+1},\ldots,A_k\in \mathfrak{Z}_{\theta}$.
	
    So, by Lemma \ref{uniqrep}, $A=\bigoplus_{i=1}^{k}A_i\in \mathcal{A}_{\theta}$, $F=G_{\bigoplus_{i=1}^{t}A_i}$, and $c=t$. Thus we have $m_{\theta}(A)=t=c+0=c+\displaystyle\sum\limits_{u\in \partial \mathcal{N}_{\theta}(A)}(d_{F}(u)-2)$.
    
   As the induction hypothesis, assume that the statements are true if $|\partial \mathcal{N}_{\theta}(A)|=r-1$ and we have $|\partial \mathcal{N}_{\theta}(A)|=r$.

    Suppose that,  without loss of generality, $m_{\theta}(A_1)>0$ and $|\partial \mathcal{N}_{\theta}(A_1)|>0$. Since $A_1$ is irreducible and acyclic, there exists an acyclic irreducible submatrix $B_1$ of $A_1$ such that $\boldsymbol{x}_{|B_1}$ is nowhere-zero and (from irreducibility of $A_1$) the vertices of $T_{B_1}$ have exactly one neighbor in $\partial \mathcal{N}_{\theta}(A_1)$, say $v$. Suppose that $C_1$ is the matrix that is obtained from $A_1$ by deleting the rows and columns corresponding to $B_1$ and $v$. By the induction hypothesis, $C_1\in \mathcal{A}_{\theta}$ and assume that $v_1,\ldots,v_s\in \mathfrak{Link}_{\theta}$, $\bf{u_1},\ldots,\bf{u_p}\in \mathfrak{Z}_{\theta}$, and $\bf{w_1},\ldots,\bf{w_q}\in \mathfrak{Min}_{\theta}$ are the submatrices of $C_1$ such that are connected to $v$. By (\ref{eigeneqn}), at least one of the neighbors of $v$, other than $B_1$, is an element of $\mathfrak{Min}_{\theta}$. Therefore, $A_1,A\in \mathcal{A}_{\theta}$.
   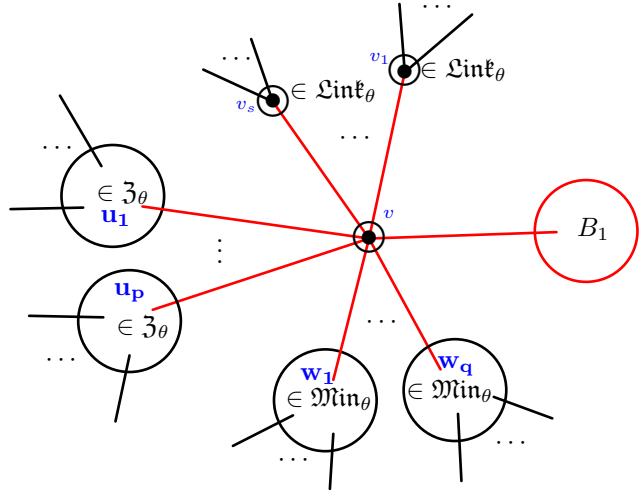
\begin{figure}[H]
 \centering
 \begin{tikzpicture}[scale=.5,line cap=round,line join=round,>=triangle 45,x=1.0cm,y=1.0cm]

 \draw [line width=1.pt] (15.11,0.66) circle (1.35cm);
 \draw [line width=1.pt] (20.70,-4.76) circle (1.35cm);
 \draw [color=red,line width=1.pt] (21.85,-0.47)-- (26.78,-0.30);
 \draw [color=red,line width=1.pt] (21.85,-0.47)-- (22.79,3.94);
 \draw [color=red,line width=1.pt] (21.85,-0.47)-- (19.32,3.16);  
 \draw (27.08,0.32) node[anchor=north west] {$B_1$};
 \draw (19.41,-4.18) node[anchor=north west] {$\in \mathfrak{Min}_{\theta}$};
% \draw (6.37,0.60) node[anchor=north west] {$A_{1}:$};
 \draw [color=red,line width=1.pt] (21.85,-0.47)-- (23.73,-3.94);
 \draw[color=blue] (24.13,-3.84) node {$\bf{w_q}$};  
 \draw [color=red,line width=1.pt] (21.85,-0.47)-- (20.91,-4.22);
 \draw[color=blue] (20.51,-4.12) node {$\bf{w_1}$};
 \draw [color=red,line width=1.pt] (21.85,-0.47)-- (15.89,0.38);
 \draw[color=blue] (15.19,0.08) node {$\bf{u_1}$}; 
 \draw [color=red,line width=1.pt] (21.85,-0.47)-- (16.16,-2.36);
 \draw[color=blue] (15.56,-1.9) node {$\bf{u_p}$}; 
 \draw [line width=1.pt] (13.72,3.32)-- (14.81,1.45);
 \draw [line width=1.pt] (14.33,0.35)-- (12.42,0.31);
 \draw [line width=1.pt] (14.86,-2.56)-- (12.91,-2.52);
 \draw [line width=1.pt] (15.55,-3.57)-- (15.18,-5.32);
 \draw [line width=1.pt] (19.85,-5.16)-- (18.27,-5.97);
 \draw [line width=1.pt] (20.91,-5.65)-- (20.82,-7.11);
 \draw [line width=1.pt] (24.19,-5.12)-- (24.28,-6.90);
 \draw [line width=1.pt] (25.13,-4.67)-- (26.67,-5.24);
 \draw [line width=1.pt] (22.79,3.94)-- (24.56,5.47);
 \draw [line width=1.pt] (22.79,3.94)-- (22.65,5.75);
 \draw [line width=1.pt] (19.32,3.16)-- (18.75,4.82);
 \draw [line width=1.pt] (19.32,3.16)-- (17.49,4.01);
 \draw (22.98,6.06) node[anchor=north west] {$\cdots$};
 \draw (24.92,-5.50) node[anchor=north west] {$\cdots$};
 \draw (19.20,-5.95) node[anchor=north west] {$\cdots$};
 \draw (13.09,-3.29) node[anchor=north west] {$\cdots$};
 \draw (12.99,2.34) node[anchor=north west] {$\cdots$};
 \draw (17.70,4.68) node[anchor=north west] {$\cdots$};
 \draw (21.52,-2.28) node[anchor=north west] {$\cdots$};
 \draw (17.58,0.18) node[anchor=north west] {$\vdots$};
 \draw (20.82,2.59) node[anchor=north west] {$\cdots$};
 \draw (22.59,-3.97) node[anchor=north west] {$\in \mathfrak{Min}_{\theta}$};
 \draw (14.43,1.29) node[anchor=north west] {$\in \mathfrak{Z}_{\theta}$};
 \draw (14.96,-2.27) node[anchor=north west] {$\in \mathfrak{Z}_{\theta}$};
 \draw [line width=1.pt] (15.55,-2.66) circle (1.35cm);
 \draw [line width=1.pt] (24.11,-4.49) circle (1.35cm);
 \draw [color=red,line width=1.pt] (27.56,-0.27) circle (1.35cm);
 \draw (22.94,4.54) node[anchor=north west] {$\in \mathfrak{Link}_{\theta}$};
 \draw (19.52,4.0) node[anchor=north west] {$\in \mathfrak{Link}_{\theta}$};
 \begin{scriptsize}
 \draw [fill=red] (21.85,-0.47) node {\LARGE\textbullet};
  \draw [fill=red] (21.85,-0.42) node {\Large$\bigodot$};
 \draw[color=blue] (22.37,0.17) node {$v$};
 \draw [fill=red] (22.79,3.94) node {\LARGE\textbullet};
  \draw [fill=red] (22.79,3.99) node {\Large$\bigodot$};
 \draw[color=blue] (22.14,4.31) node {${v_1}$};
 \draw [fill=red] (19.32,3.16) node {\LARGE\textbullet};
  \draw [fill=red] (19.32,3.19) node {\Large$\bigodot$};
 \draw[color=blue] (18.62,3.03) node {${v_s}$};
 \end{scriptsize}
 \end{tikzpicture}
 \caption{A schematic view of $A_1$ and the neighbors of $v$ in proof of Theorem \ref{figen}.}
   \end{figure}     
Assume that $F'=(V(G_{C_1})\setminus \mathcal{N}_{\theta}^{\circ}(C_1),E(G_{C_1})\setminus\mathcal{E}_{\theta}^{\circ}(C_1))$, $F_i=(V(G_{A_i})\setminus \mathcal{N}_{\theta}^{\circ}(A_i),E(G_{A_i})\setminus\mathcal{E}_{\theta}^{\circ}(A_i))$, and $c_i$ is the number of components of $F_i$, for $i\in [k]$. By Proposition \ref{adj11}, $m_{\theta}(A_{1})=m_{\theta}(C_1)$. Thus,
\begin{align*}
m_{\theta}(A)=\sum_{i=1}^{k}m_{\theta}(A_i)&=\sum_{i=2}^{k}m_{\theta}(A_i)+m_{\theta}(C_1)\\
&=\sum_{i=2}^{k}m_{\theta}(A_i)+(c_1+(q-1))+\sum_{ {u\in \partial \mathcal{N}_{\theta}(C_1)} }(d_{F'}(u)-2)\\
&=\sum_{i=2}^{k}m_{\theta}(A_i)+c_1+(d_{F_1}(v)-2)+\sum_{ \substack{u\in \partial \mathcal{N}_{\theta}(A_1)\\ u\neq v} }(d_{F_1}(u)-2)\\
&=\sum_{i=2}^{k}m_{\theta}(A_i)+c_1+\sum_{ \substack{u\in \partial \mathcal{N}_{\theta}(A_1)} }(d_{F_1}(u)-2)\\
&=\sum_{i=1}^{k}(c_i+\sum_{u\in \partial \mathcal{N}_{\theta}(A_i)}(d_{F_i}(u)-2))\\
&=c+\sum_{u\in \partial \mathcal{N}_{\theta}(A)}(d_{F}(u)-2).
\end{align*}
iii: If $m_{\theta}(A)=0$ or $|\partial \mathcal{N}_{\theta}(A)|=0$, then the statement is true. Suppose that $m_{\theta}(A)>0$, $|\{\text{componets of $(F-\partial \mathcal{N}_{\theta}(A))$ } \}|=p$, and $|\partial \mathcal{N}_{\theta}(A)|=q$ . Similar to the proof of i,ii, by Proposition \ref{adj11} and induction on $q$, we have $m_{\theta}(A_{1})=m_{\theta}(C_1)$ and
\begin{align*}
m_{\theta}(A)=\sum_{i=1}^{k}m_{\theta}(A_i)&=\sum_{i=2}^{k}m_{\theta}(A_i)+m_{\theta}(C_1)\\
&=(p-1)-(q-1)=p-q=|\{\text{componets of $(F-\partial \mathcal{N}_{\theta}(A))$ } \}|-|\partial \mathcal{N}_{\theta}(A)|.
\end{align*}
\end{proof}
\begin{remark}
The formula iii. in Theorem \ref{figen}, is a corollary of \cite[Theorem 2]{S2}.
\end{remark}
\begin{remark}
	Suppose that $T$ is a tree and $m_{T}(\lambda)=k$ for given $\lambda,k$. It can be easily seen that  $m_{T- \mathcal{N}^{\circ}_{\lambda}(A_{T})}(\lambda)=k$ and its $\lambda$-eigenvectors are  $\lambda$-eigenvectors of $T$  by removing the entries corresponding to $\mathcal{N}^{\circ}_{\lambda}(A_{T})$. %So, we can use Theorem \ref{mulnum} to prove relation \ref{mulnumii}. َ
	Also, in Theorem \ref{figen}, if we have the eigenvectors of $\mathbf{m}_{i}$, $i\in [n]$, we can obtain the eigenvectors of the resulting matrix easily.
\end{remark}

%\begin{remark}
%	By Theorem \ref{figen}, if we have the eigenvectors of $\mathbf{m}_{i}$, $i\in [n]$, then eigenvectors of resulting matrix will obtain easily.
%\end{remark}

\begin{example}
	For the tree $T$ that is shown in Figure \ref{rad2st}, we have $m_{T}(\sqrt{2})=3$ and it has the decomposition as below.
	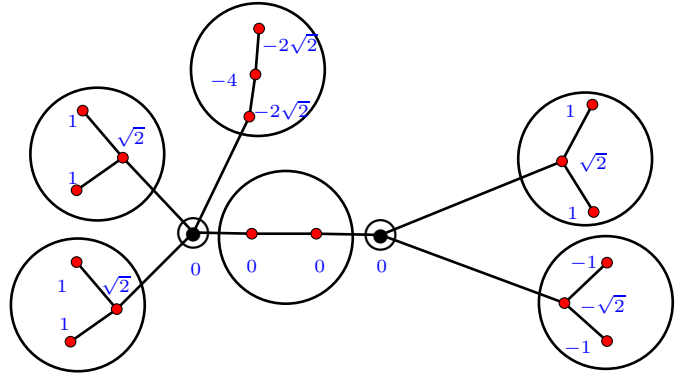
\begin{figure}[H]
		\centering
		\begin{tikzpicture}[scale=.8,line cap=round,line join=round,>=triangle 45,x=1.0cm,y=1.0cm]

		\draw [line width=1.pt] (-6.1,2.18)-- (-5.44,1.4);
		\draw [line width=1.pt] (-5.44,1.4)-- (-6.2,0.86);
		\draw [line width=1.pt] (2.28,2.28)-- (1.78,1.34);
		\draw [line width=1.pt] (1.78,1.34)-- (2.3,0.5);
		\draw [line width=1.pt] (-6.2,-0.33)-- (-5.54,-1.11);
		\draw [line width=1.pt] (-5.54,-1.11)-- (-6.3,-1.65);
		\draw [line width=1.pt] (2.52,-0.34)-- (1.82,-1.01);
		\draw [line width=1.pt] (1.82,-1.01)-- (2.52,-1.64);
		\draw [line width=1.pt] (-3.2,3.54)-- (-3.26,2.78);
		\draw [line width=1.pt] (-3.26,2.78)-- (-3.36,2.08);
		\draw [line width=1.pt] (-4.28,0.16)-- (-3.32,0.14);
		\draw [line width=1.pt] (-3.32,0.14)-- (-2.26,0.14);
		\draw [line width=1.pt] (-2.26,0.14)-- (-1.2,0.12);
		\draw [line width=1.pt] (-1.2,0.12)-- (1.78,1.34);
		\draw [line width=1.pt] (-1.2,0.12)-- (1.82,-1.01);
		\draw [line width=1.pt] (-4.28,0.16)-- (-5.44,1.4);
		\draw [line width=1.pt] (-4.28,0.16)-- (-5.54,-1.11);
		\draw [line width=1.pt] (-4.28,0.16)-- (-3.36,2.08);
		\draw [line width=1.pt] (-5.86,1.46) circle (1.1cm);
		\draw [line width=1.pt] (-6.18,-1.04) circle (1.1cm);
		\draw [line width=1.pt] (-3.22,2.86) circle (1.1cm);
		\draw [line width=1.pt] (-2.76,0.08) circle (1.1cm);
		\draw [line width=1.pt] (2.16,1.38) circle (1.1cm);
		\draw [line width=1.pt] (2.5,-1.) circle (1.1cm);
		\begin{scriptsize}
		\draw [fill=red] (-6.1,2.18) circle (2.5pt);
		\draw[color=blue] (-6.26,2.01) node {$1$};
		\draw [fill=red] (-5.44,1.4) circle (2.5pt);
		\draw[color=blue] (-5.32,1.75) node {$\sqrt{2}$};
		\draw [fill=red] (-6.2,0.86) circle (2.5pt);
		\draw[color=blue] (-6.26,1.07) node {$1$};
		\draw [fill=red] (2.28,2.28) circle (2.5pt);
		\draw[color=blue] (1.92,2.17) node {$1$};
		\draw [fill=red] (1.78,1.34) circle (2.5pt);
		\draw[color=blue] (2.28,1.33) node {$\sqrt{2}$};
		\draw [fill=red] (2.3,0.5) circle (2.5pt);
		\draw[color=blue] (1.96,0.49) node {$1$};
		\draw [fill=red] (-6.2,-0.33) circle (2.5pt);
		\draw[color=blue] (-6.44,-0.73) node {$1$};
		\draw [fill=red] (-5.54,-1.11) circle (2.5pt);
		\draw[color=blue] (-5.56,-0.71) node {$\sqrt{2}$};
		\draw [fill=red] (-6.3,-1.65) circle (2.5pt);
		\draw[color=blue] (-6.4,-1.35) node {$1$};
		\draw [fill=red] (2.52,-0.34) circle (2.5pt);
		\draw[color=blue] (2.14,-0.35) node {$-1$};
		\draw [fill=red] (1.82,-1.01) circle (2.5pt);
		\draw[color=blue] (2.44,-1.03) node {$-\sqrt{2}$};
		\draw [fill=red] (2.52,-1.64) circle (2.5pt);
		\draw[color=blue] (2.04,-1.75) node {$-1$};
		\draw [fill=red] (-3.2,3.54) circle (2.5pt);
		\draw[color=blue] (-2.7,3.29) node {$-2\sqrt{2}$};
		\draw [fill=red] (-3.26,2.78) circle (2.5pt);
		\draw[color=blue] (-3.78,2.67) node {$-4$};
		\draw [fill=red] (-3.36,2.08) circle (2.5pt);
		\draw[color=blue] (-2.84,2.19) node {$-2\sqrt{2}$};
		
		\draw [fill=red] (-4.28,0.11) node {\LARGE\textbullet};
		\draw [fill=red] (-4.28,0.16)  node {\Large$\bigodot$};
		\draw[color=blue] (-4.24,-0.45) node {$0$};
		\draw [fill=red] (-3.32,0.14) circle (2.5pt);
		\draw[color=blue] (-3.32,-0.39) node {$0$};
		\draw [fill=red] (-2.26,0.14) circle (2.5pt);
		\draw[color=blue] (-2.2,-0.39) node {$0$};
		
		\draw [fill=red] (-1.2,0.07) node {\LARGE\textbullet};
		\draw [fill=red] (-1.2,0.12)  node {\Large$\bigodot$};
		\draw[color=blue] (-1.18,-0.39) node {$0$};
		\end{scriptsize}
		\end{tikzpicture}
		\caption{The structure of a tree with a $\sqrt{2}$-eigenvector.}
		\label{rad2st}	
	\end{figure}	
	
\end{example}

\textbf{Parter-Weiner vertices:}
Suppose that $T$ is a tree with a given eigenvalue $\lambda$. The vertex $v$ of $T$ is called 
\begin{itemize}
	\item \textit{Parter-Wiener vertex} if  by removing $v$, the multiplicity of $\lambda$ increases by $1$,
	\item \textit{downer vertex} if  by removing $v$, the multiplicity of $\lambda$ decreases by $1$,
	\item \textit{neutral vertex} if  by removing $v$, the multiplicity of $\lambda$ does not change.
\end{itemize}
\begin{remark}
	It is easy to see that the linking vertices ($\partial \mathcal{N}_{\lambda}(A)$) in Theorem \ref{figen} are Parter-Weiner vertices. The vertices (indices) in $\mathcal{N}_{\lambda}^{\circ}(A)$ (the indices of elements of $\mathfrak{Z}_{\lambda}$) are neutral vertices and the vertices of the elements of $\mathfrak{Min}_{\lambda}$ are downer vertices. 
\end{remark}

\section{Minimal Trees with Respect to the Adjacency Eigenvalues}

In the sequel of the paper, for trees we denote by  $\mathcal{Z}_{\lambda}$ the set $\{T:T\text{ is a tree,}\, m_{T}(\lambda)=0\}$ and denote by $\mathcal{T}_{min,\lambda}$ the set $\{T: T\text{ is a tree with a nowhere-zero $\lambda$-eigenvector}\}$.

\begin{definition}\label{defmin}
	Let $\lambda\in \mathtt{TRAI}$. A tree $T$ is a \textit{$\lambda$-minimal} tree, if it has a nowhere-zero $\lambda$-eigenvector.
\end{definition}
Such trees in Definition \ref{defmin}, were defined in \cite{GL} as minimal trees and  in \cite{S2} as $\lambda$-primes.

\begin{remark}
	By Proposition \ref{mul1}, for a $\lambda$-minimal tree we have  $m_{T}(\lambda)=1$.
\end{remark}
\begin{lem}
	For every $\lambda\in\mathtt{TRAI}$, we have $\mathcal{T}_{min,\lambda}\neq \emptyset$.
\end{lem}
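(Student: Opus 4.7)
The plan is to start from a tree that is guaranteed to have $\lambda$ as an eigenvalue and then extract a $\lambda$-minimal subtree from the support of an associated eigenvector.

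By Theorem~\ref{trai}, for $\lambda\in\mathtt{TRAI}$ there exists a finite tree $T_{0}$ with $\lambda\in Spec(T_{0})$. Pick any $\lambda$-eigenvector $\boldsymbol{x}$ of $T_{0}$ (so $\boldsymbol{x}\neq\boldsymbol{0}$), and set
\[
S=\{u\in V(T_{0}):\boldsymbol{x}_{u}\neq 0\}\neq\emptyset.
\]
Let $T_{1}$ be any connected component of the induced subgraph $T_{0}[S]$; since $T_{0}$ is a tree, so is $T_{1}$. The goal is to show that $T_{1}\in\mathcal{T}_{min,\lambda}$, and for this it is enough to verify that the restriction $\boldsymbol{y}:=\boldsymbol{x}|_{V(T_{1})}$ is a $\lambda$-eigenvector of $T_{1}$ (it is nowhere-zero by the definition of $S$).

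The key step uses the eigenvalue equation~\eqref{eigeneqn} applied to $A_{T_{0}}$: for any $v\in V(T_{1})$,
\[
\lambda\boldsymbol{x}_{v}=\sum_{u\sim_{T_{0}}v}\boldsymbol{x}_{u}.
\]
The main observation is that any neighbor $u$ of $v$ in $T_{0}$ with $u\notin V(T_{1})$ must satisfy $\boldsymbol{x}_{u}=0$: otherwise $u\in S$ and the edge $uv$ would force $u$ into the same component of $T_{0}[S]$ as $v$, namely $T_{1}$, a contradiction. Hence the above sum collapses to $\sum_{u\sim_{T_{1}}v}\boldsymbol{x}_{u}$, yielding $\lambda\boldsymbol{y}_{v}=\sum_{u\sim_{T_{1}}v}\boldsymbol{y}_{u}$ for every $v\in V(T_{1})$, so $\boldsymbol{y}$ is indeed a nowhere-zero $\lambda$-eigenvector of $T_{1}$. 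Therefore $T_{1}\in\mathcal{T}_{min,\lambda}$ and the set is non-empty.

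The only real content is the above ``decoupling'' remark, and it is essentially built into the acyclic/tree structure — the zero coordinates of $\boldsymbol{x}$ partition the eigenvalue equations into those supported on each component of $T_{0}[S]$. An alternative route is to apply Theorem~\ref{figen} to $A_{T_{0}}$: since $m_{T_{0}}(\lambda)\geq 1$, the decomposition $A_{T_{0}}\in\mathcal{A}_{\lambda}$ cannot consist only of blocks from $\mathfrak{Z}_{\lambda}$ and linking vertices (the latter requires at least two $\mathfrak{Min}_{\lambda}$-neighbors by construction rule~(3)), so at least one block lies in $\mathfrak{Min}_{\lambda}$, and as a sub-adjacency-matrix of $A_{T_{0}}$ it is itself the adjacency matrix of a subtree in $\mathcal{T}_{min,\lambda}$. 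Either way, no serious obstacle arises.
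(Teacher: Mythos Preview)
Your proof is correct and follows essentially the same approach as the paper's: both start from a tree $T$ with $m_T(\lambda)>0$ (via Theorem~\ref{trai}) and obtain a $\lambda$-minimal subtree by restricting to the support of an eigenvector. The only cosmetic difference is that the paper phrases this as ``removing the vertices of $\mathcal{N}_\lambda(T)$'' (i.e.\ using the generic eigenvector of Lemma~\ref{nonz}) rather than an arbitrary one, and it omits the decoupling verification that you spell out explicitly.
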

\begin{proof}
Suppose that $T$ is a tree such that $m_{T}(\lambda)>0$. By removing the vertices of $\mathcal{N}_{\lambda}(T)$ we have some components that have multiplicity 1 and they are minimal.
\end{proof}

By Perron–Frobenius theorem we have that every tree $T$ is an element of $\mathcal{T}_{min,\rho(T)}$ and we pose the following conjecture.
\begin{conj}
	Let $\theta\in\mathtt{TRAI}$ with the minimal polynomial $f(x)$. If $\lambda=\displaystyle \max_{f(z)=0} |z|$, then there exists a tree $T$ such that $\rho(T)=\lambda$.
\end{conj}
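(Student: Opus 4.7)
My plan is to reformulate the statement as a question about sign patterns of $\lambda$-eigenvectors of $\lambda$-minimal trees, and then to attempt to realize a constant-sign eigenvector either by sharpening the construction of Theorem~\ref{trai} or by a deformation argument.

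First I would note that $\lambda$ is itself a totally real algebraic integer: its minimal polynomial is either $f(x)$ (when $\lambda$ itself is a root of $f$) or the monic associate of $f(-x)$ (when only $-\lambda$ is), and in either case every conjugate of $\lambda$ has absolute value at most $\lambda$. The case $\lambda=0$ is realized by $T=K_1$, so I assume $\lambda>0$. Perron--Frobenius then gives a clean reformulation. If $T$ is any tree with $\rho(T)=\lambda$, the $\lambda$-eigenspace is one-dimensional and spanned by a strictly positive eigenvector, so $T\in\mathcal{T}_{min,\lambda}$ and its (essentially unique, by Proposition~\ref{mul1}) nowhere-zero $\lambda$-eigenvector has constant sign. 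Conversely, if $T\in\mathcal{T}_{min,\lambda}$ and this eigenvector has constant sign, then up to a global sign flip it is the Perron eigenvector, so $\lambda=\rho(T)$. The conjecture is therefore equivalent to the following assertion: \emph{there exists $T\in\mathcal{T}_{min,\lambda}$ whose nowhere-zero $\lambda$-eigenvector is of one sign.}

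The natural starting point is to apply Theorem~\ref{trai} to $\lambda$, obtaining a tree $T_0$ with $\lambda\in Spec(T_0)$, and then to extract a $\lambda$-minimal subtree $T_1\subseteq T_0$ by deleting $\mathcal{N}_{\lambda}(T_0)$, exactly as in the preceding lemma. It then remains to arrange that the nowhere-zero eigenvector of $T_1$ has constant sign. The hard part is precisely this: the construction behind Theorem~\ref{trai} controls only the appearance of $\lambda$ in the spectrum, not the sign pattern of the eigenvector, and a $\lambda$-minimal tree can genuinely fail to satisfy $\rho=\lambda$. For instance, $P_4$ is $(\phi-1)$-minimal with nowhere-zero eigenvector $(1,\phi-1,-(\phi-1),-1)$ of alternating sign, while $\rho(P_4)=\phi$ rather than $\phi-1$. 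I would attempt two strategies. The first is to refine Salez's inductive argument in \cite{S1}, tracking the sign of each coordinate of the target eigenvector at every step and choosing the attachments so as to preserve positivity; the hypothesis that all conjugates of $\lambda$ have modulus at most $\lambda$ should supply the algebraic slack needed to keep signs aligned. The second is a deformation approach: start from a tree $T$ with $\lambda\in Spec(T)$ and $\rho(T)>\lambda$, subdivide edges along a carefully chosen path to drive $\rho$ downward while arguing (via a divisibility statement for characteristic polynomials on subdivision paths) that the minimal polynomial of $\lambda$ continues to divide $\phi_T$, and locate a finite subdivision at which $\rho=\lambda$ exactly. Either route requires a genuinely new ingredient beyond the structure theory developed in this paper, which is presumably why the statement is posed as a conjecture.
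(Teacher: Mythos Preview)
The statement is labeled a \emph{Conjecture} in the paper and is left open; there is no proof in the paper to compare against. You have correctly identified this yourself in your final sentence.

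Your reformulation via Perron--Frobenius is correct and is the natural first step: the conjecture is indeed equivalent to the existence of some $T\in\mathcal{T}_{min,\lambda}$ whose (unique up to scale) nowhere-zero $\lambda$-eigenvector has constant sign. Your observation that $\lambda$ is itself in $\mathtt{TRAI}$, with all conjugates of modulus at most $\lambda$, is also correct, and your $P_4$ example properly illustrates that a generic $\lambda$-minimal tree need not have $\rho=\lambda$.

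That said, what you have written is an outline of two plausible attack strategies, not a proof. Both proposed routes contain genuine gaps that you explicitly acknowledge: for the first, you would need to show that Salez's recursive construction in \cite{S1} can be steered so that every coordinate of the resulting eigenvector stays positive, and it is not clear that the hypothesis ``all conjugates have modulus $\le\lambda$'' suffices to guarantee this at each step; for the second, subdivision does lower $\rho$, but there is no reason to expect $\rho$ to hit $\lambda$ exactly at any finite stage, nor that $\lambda$ remains an eigenvalue throughout the process. Your closing remark is accurate: a proof would require an ingredient not present in this paper, and the authors accordingly state the claim only as a conjecture.
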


\begin{prop}\label{infmin}
	Let $\lambda\in \mathtt{TRAI}$. If $\lambda=0$, then $\mathcal{T}_{min,\lambda}=\{K_{1}\}$. If $\lambda\neq 0$, then  $|\mathcal{T}_{min,\lambda}|=+\infty$. 
\end{prop}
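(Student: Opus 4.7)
The case $\lambda=0$ is straightforward: for any tree $T$ with $|V(T)|\ge 2$, pick any leaf $u$ with unique neighbour $v$; evaluating $A_T\mathbf{x}=0$ at row $u$ reads $\mathbf{x}_v=0$, so no $0$-eigenvector can be nowhere-zero and $T\notin\mathcal{T}_{\min,0}$. Since the trivial vector $(1)$ is a nowhere-zero $0$-eigenvector of $K_1$, we get $K_1\in\mathcal{T}_{\min,0}$, whence $\mathcal{T}_{\min,0}=\{K_1\}$.

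For $\lambda\neq 0$, the plan is to produce pairwise non-isomorphic $\lambda$-minimal trees of arbitrarily large order. Start from one $\lambda$-minimal tree $T_0$ provided by the preceding lemma, with nowhere-zero eigenvector $x$, and fix a leaf $u$ of $T_0$ with neighbour $v$ (normalised so $x_u=1$, $x_v=\lambda$). The idea is to graft onto $T_0$ a rooted auxiliary tree $(B_k,r_k)$ of unbounded size via a new edge from $v$ to $r_k$. Using the classical identity
\[
\phi_{T_0\cup_v B_k}(x)=\phi_{T_0}(x)\phi_{B_k}(x)-\phi_{T_0-v}(x)\phi_{B_k-r_k}(x),
\]
combined with $\phi_{T_0}(\lambda)=0$ and $\phi_{T_0-v}(\lambda)\neq 0$ (from Proposition \ref{mul1}), the condition ``$\lambda$ is an eigenvalue of the grafted tree'' reduces to $\phi_{B_k-r_k}(\lambda)=0$. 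By Theorem \ref{trai} there are infinitely many non-isomorphic $(B_k,r_k)$ of unbounded order satisfying this (for instance, take $B_k-r_k$ to be any $\lambda$-minimal tree of growing order and attach $r_k$ as a new pendant).

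The delicate step, and the main obstacle, is to ensure the resulting $\lambda$-eigenvector is nowhere-zero. The Fredholm alternative applied to $(A_{T_0}-\lambda I)\,y|_{T_0}=-y_{r_k}\,e_v$, combined with the fact that the cokernel of $A_{T_0}-\lambda I$ is spanned by $x$ and $x_v\neq 0$, forces $y_{r_k}=0$: a naive single-edge attachment always creates a zero at $r_k$. To circumvent this I would replace the single-edge grafting by a more subtle operation — e.g.\ simultaneous attachments of several branches at $v$ (so that the Fredholm constraint becomes $\sum_i y_{r_i}x_v=0$, a single linear relation rather than a pinning), or substitution of the leaf $u$ and its neighbourhood by a carefully chosen rooted subtree built from $\lambda$-minimal blocks via $\mathfrak{Z}_\lambda$-buffers. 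One then certifies via the multiplicity formula of Theorem \ref{figen} applied to the $\mathcal{A}_\lambda$-decomposition that, for infinitely many $k$, the grafted tree has $\lambda$ as a simple eigenvalue with a nowhere-zero eigenvector, yielding the desired infinite family.
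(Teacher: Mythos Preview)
Your treatment of $\lambda=0$ is correct and matches the paper.

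For $\lambda\neq 0$, you correctly diagnose the obstruction: a single-edge graft onto a $\lambda$-minimal tree forces the attachment vertex to zero. But your proposed remedies stay at the level of heuristics and leave the real work undone. Attaching several branches at $v$ does relax the Fredholm constraint to a single linear relation among the $y_{r_i}$, but you still have to guarantee that each $y_{r_i}\neq 0$ and, more importantly, that the eigenvector is nowhere-zero \emph{throughout} each attached branch---and you give no mechanism for this. Invoking Theorem~\ref{figen} does not close the gap: that theorem reads off the multiplicity from a known nodal set; it does not produce a tree with empty nodal set.

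The paper sidesteps the characteristic-polynomial and Fredholm machinery entirely with an explicit construction. Given $T\in\mathcal{T}_{\min,\lambda}$ with leaf $v$, neighbour $u$, and nowhere-zero eigenvector $x$ normalised so that $x_v=1$, $x_u=\lambda$, take $2k-1$ copies of $T$ and identify all copies of $v$. On $k$ of the copies put the vector $x$; on the remaining $k-1$ copies put $-x$ (away from $v$) \emph{and} attach two fresh pendants to each negated copy of $u$, each pendant assigned the value $-1$. One checks the eigenvector equation directly: at $v$ the neighbour sum is $k\lambda-(k-1)\lambda=\lambda$; at a negated $u'$ the two new pendants contribute $-2$, which together with the value $+1$ at $v$ exactly compensates for the sign flip; at the pendants the equation reads $\lambda\cdot(-1)=-\lambda$. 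All other vertices inherit their equation from $T$. The idea you are missing is twofold: use copies of $T$ itself as the branches (so nowhere-zero on each branch comes for free), and repair the eigenvector equation at the flipped $u'$ with two extra leaves rather than trying to engineer an abstract $B_k$.
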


\begin{proof}
	First, suppose that $\lambda=0$. By (\ref{eigeneqn}), it is easy to see that the only $T\in \mathcal{T}_{min,0}$ is $K_1$.
	
	Now, suppose that  $\lambda\neq 0$. We construct from every element of $T\in \mathcal{T}_{min,\lambda}$, another element $T'\in \mathcal{T}_{min,\lambda}$ such that $|V(T)|<|V(T')|$. Assume that $T\in \mathcal{T}_{min,\lambda}$ with a pendant vertex $v$ and its neighbor $u$. Suppose that $\boldsymbol{x}$ is the $\lambda$-eigenvector such that $\boldsymbol{x}_{v}=1$. Hence, $\boldsymbol{x}_{u}=\lambda$.

	\begin{figure}[H]
		\centering
		\begin{tikzpicture}[scale=.8,line cap=round,line join=round,>=triangle 45,x=1.0cm,y=1.0cm]

		\draw [line width=1.pt] (-2.32,0.92)-- (-3.94,0.86);
		\draw [line width=1.pt] (-3.94,0.86)-- (-5.26,2.06);
		\draw [line width=1.pt] (-5.32,0.74)-- (-3.94,0.86);
		\draw [line width=1.pt] (-3.94,0.86)-- (-5.28,-0.22);
		\draw [line width=1.pt] (-4.86,0.9) circle (1.3cm);
		\draw (-8.,1.3) node[anchor=north west] {\Large $T:$};
		\draw (-2.58,0.7) node[anchor=north west] {$1$};
		\draw (-4.24,0.8) node[anchor=north west] {$\lambda$};
		\begin{scriptsize}
		\draw [fill=red] (-2.32,0.92) circle (2.5pt);
		\draw[color=blue] (-2.18,1.29) node {$v$};
		\draw [fill=red] (-3.94,0.86) circle (2.5pt);
		\draw[color=blue] (-3.9,1.23) node {$u$};
		\end{scriptsize}
		\end{tikzpicture}
		\caption{A $\lambda$-eigenvector of $T$.}
	\end{figure}
For an integer $k>1$, we construct the tree $T'$ as following: we use $2k-1$ copies $T_1,\ldots,T_k, T^{'}_{1},$ $\ldots,$ $T^{'}_{k-1}$ of $T$ and  identify all of the copies of vertex $v$, as shown below. Also, we connect to every copy $u^{'}_{i}$ of $u$, $i\in [k-1]$, two new pendant vertices. Now, we assign to $T_1,\ldots,T_k$ the vector $\boldsymbol{x}$ and to $T^{'}_{1},\ldots,T^{'}_{k-1}$ except $v$, the vector $-\boldsymbol{x}$, and assign $-1$ to new pendant vertices. By (\ref{eigeneqn}), it is easy to check that this vector is a nowhere-zero $\lambda$-eigenvector and hence, $T'\in \mathcal{T}_{min,\lambda}$.
	\begin{figure}[H]
		\centering
		\begin{tikzpicture}[scale=.8,line cap=round,line join=round,>=triangle 45,x=1.0cm,y=1.0cm]

		\draw [line width=1.pt] (3.56,1.12)-- (0.98,3.04);
		\draw [line width=1.pt] (0.98,3.04)-- (-0.34,4.24);
		\draw [line width=1.pt] (-0.4,2.92)-- (0.98,3.04);
		\draw [line width=1.pt] (0.98,3.04)-- (-0.36,1.96);
		\draw [line width=1.pt] (1.14,-0.68)-- (-0.34,0.33);
		\draw [line width=1.pt] (-0.4,-0.99)-- (1.14,-0.68);
		\draw [line width=1.pt] (1.14,-0.68)-- (-0.36,-1.95);
		\draw [line width=1.pt] (1.14,-0.68)-- (3.56,1.12);
		\draw [line width=1.pt] (3.56,1.12)-- (6.22,2.9);
		\draw [line width=1.pt] (3.56,1.12)-- (6.72,-0.58);
		\draw [line width=1.pt] (6.72,-0.58)-- (8.18,0.44);
		\draw [line width=1.pt] (6.72,-0.58)-- (8.28,-0.56);
		\draw [line width=1.pt] (6.72,-0.58)-- (8.16,-1.56);
		\draw [line width=1.pt] (6.22,2.9)-- (7.34,4.);
		\draw [line width=1.pt] (6.22,2.9)-- (7.6,2.98);
		\draw [line width=1.pt] (6.22,2.9)-- (7.58,2.02);
		\draw [line width=1.pt] (6.22,2.9)-- (5.7,3.92);
		\draw [line width=1.pt] (6.22,2.9)-- (5.18,3.32);
		\draw [line width=1.pt] (6.72,-0.58)-- (5.7,-1.52);
		\draw [line width=1.pt] (6.72,-0.58)-- (6.48,-1.82);
		\draw (5.82,1.64) node[anchor=north west] {$\vdots$};
		\draw (1.24,1.74) node[anchor=north west] {$\vdots$};
		\draw [line width=1.pt] (0.2,3.06) circle (1.4cm);
		\draw [line width=1.pt] (0.28,-0.82) circle (1.40cm);
		\draw [line width=1.pt] (7.06,2.94) circle (1.4cm);
		\draw [line width=1.pt] (7.64,-0.56) circle (1.4cm);
		\draw (-4,0.99) node[anchor=north west] {\Large $T':$};
		\draw (-1.92,3.66) node[anchor=north west] {$T_1$};
		\draw (-1.82,-0.5) node[anchor=north west] {$T_k$};
		\draw (8.9,3.68) node[anchor=north west] {$T_{1}^{'}$};
		\draw (9.16,-0.22) node[anchor=north west] {$T_{k-1}^{'}$};
		\draw (0.78,2.92) node[anchor=north west] {$\lambda$};
		\draw (1.0,-0.71) node[anchor=north west] {$\lambda$};
		\draw (3.33,0.86) node[anchor=north west] {$1$};
		\draw (5.82,2.78) node[anchor=north west] {$-\lambda$};
		\draw (6.5,-0.83) node[anchor=north west] {$-\lambda$};
		\draw (4.4,3.6) node[anchor=north west] {$-1$};
		\draw (5.2,4.54) node[anchor=north west] {$-1$};
		\draw (5.04,-1.6) node[anchor=north west] {$-1$};
		\draw (6.18,-1.92) node[anchor=north west] {$-1$};
		\begin{scriptsize}
		\draw [fill=red] (3.56,1.12) circle (2.5pt);
		\draw[color=blue] (3.56,1.57) node {$v$};
		\draw [fill=red] (0.98,3.04) circle (2.5pt);
		\draw[color=blue] (1.24,3.31) node {$u_1$};
		\draw [fill=red] (1.14,-0.68) circle (2.5pt);
		\draw[color=blue] (1.08,-0.27) node {$u_k$};
		\draw [fill=red] (6.22,2.9) circle (2.5pt);
		\draw[color=blue] (6.3,3.39) node {$u_{1}^{'}$};
		\draw [fill=red] (6.72,-0.58) circle (2.5pt);
		\draw[color=blue] (6.8,-0.01) node {$u_{k-1}^{'}$};
		\draw [fill=red] (5.7,3.92) circle (2.5pt);
		\draw [fill=red] (5.18,3.32) circle (2.5pt);
		\draw [fill=red] (5.7,-1.52) circle (2.5pt);
		\draw [fill=red] (6.48,-1.82) circle (2.5pt);
		\end{scriptsize}
		\end{tikzpicture}
		\caption{A $\lambda$-eigenvector of $T'$.}
	\end{figure}	
\end{proof}
In the following figure, there is another construction of a $\lambda$-minimal tree from $T$ of Proposition \ref{infmin}.

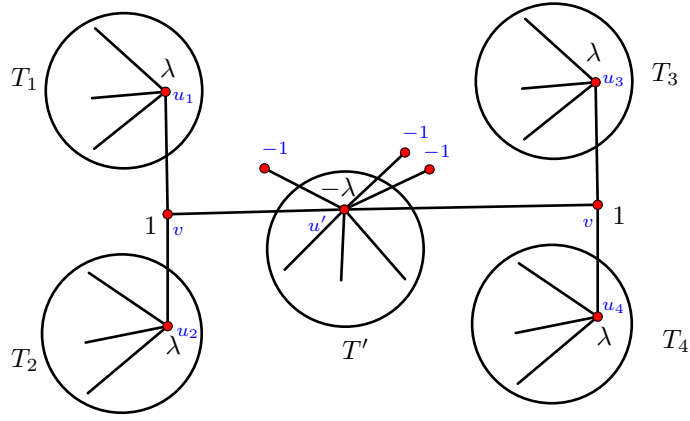
\begin{figure}[H]
	\centering
	\begin{tikzpicture}[scale=.7,line cap=round,line join=round,>=triangle 45,x=1.0cm,y=1.0cm]

	\draw [line width=1.pt] (10.9,1.65)-- (10.86,3.97);
	\draw [line width=1.pt] (10.86,3.97)-- (9.54,5.17);
	\draw [line width=1.pt] (9.48,3.85)-- (10.86,3.97);
	\draw [line width=1.pt] (10.86,3.97)-- (9.52,2.89);
	\draw [line width=1.pt] (10.9,-0.47)-- (9.42,0.54);
	\draw [line width=1.pt] (9.36,-0.78)-- (10.9,-0.47);
	\draw [line width=1.pt] (10.9,-0.47)-- (9.4,-1.74);
	\draw [line width=1.pt] (10.9,-0.47)-- (10.9,1.65);
	\draw [line width=1.pt] (10.08,3.99) circle (1.47cm);
	\draw [line width=1.pt] (10.04,-0.61) circle (1.5cm);
	\draw (11.74,4.52) node[anchor=north west] {$T_3$};
	\draw (11.94,-0.54) node[anchor=north west] {$T_4$};
	\draw (10.5,4.90) node[anchor=north west] {$\lambda$};
	\draw (10.7,-0.48) node[anchor=north west] {$\lambda$};
	\draw (11.,1.78) node[anchor=north west] {$1$};
	\draw [line width=1.pt] (2.82,1.47)-- (2.78,3.79);
	\draw [line width=1.pt] (2.78,3.79)-- (1.46,4.99);
	\draw [line width=1.pt] (1.4,3.67)-- (2.78,3.79);
	\draw [line width=1.pt] (2.78,3.79)-- (1.44,2.71);
	\draw [line width=1.pt] (2.82,-0.65)-- (1.34,0.36);
	\draw [line width=1.pt] (1.28,-0.96)-- (2.82,-0.65);
	\draw [line width=1.pt] (2.82,-0.65)-- (1.32,-1.92);
	\draw [line width=1.pt] (2.82,-0.65)-- (2.82,1.47);
	\draw [line width=1.pt] (2.,3.81) circle (1.47cm);
	\draw [line width=1.pt] (1.96,-0.79) circle (1.50cm);
	\draw (-0.3,4.42) node[anchor=north west] {$T_1$};
	\draw (-0.3,-0.9) node[anchor=north west] {$T_2$};
	\draw (2.5,4.6) node[anchor=north west] {$\lambda$};
	\draw (2.6,-0.66) node[anchor=north west] {$\lambda$};
	\draw (2.2,1.6) node[anchor=north west] {$1$};
	\draw [line width=1.pt] (2.82,1.47)-- (6.14,1.56);
	\draw [line width=1.pt] (6.14,1.56)-- (10.9,1.65);
	\draw [line width=1.pt] (6.16,0.81) circle (1.47cm);
	\draw (5.92,-0.72) node[anchor=north west] {$T'$};
	\draw (5.5,2.34) node[anchor=north west] {$-\lambda$};
	\draw [line width=1.pt] (6.14,1.56)-- (5.02,0.42);
	\draw [line width=1.pt] (6.14,1.56)-- (6.08,0.22);
	\draw [line width=1.pt] (6.14,1.56)-- (7.28,0.24);
	\draw [line width=1.pt] (6.14,1.56)-- (7.28,2.64);
	\draw [line width=1.pt] (6.14,1.56)-- (4.64,2.34);
	\draw [line width=1.pt] (6.14,1.56)-- (7.74,2.32);
	\begin{scriptsize}
	\draw [fill=red] (10.9,1.65) circle (2.5pt);
	\draw[color=blue] (10.72,1.3) node {$v$};
	\draw [fill=red] (10.86,3.97) circle (2.5pt);
	\draw[color=blue] (11.2,4.) node {$u_3$};
	\draw [fill=red] (10.9,-0.47) circle (2.5pt);
	\draw[color=blue] (11.2,-0.3) node {$u_4$};
	\draw [fill=red] (2.82,1.47) circle (2.5pt);
	\draw[color=blue] (3.02,1.15) node {$v$};
	\draw [fill=red] (2.78,3.79) circle (2.5pt);
	\draw[color=blue] (3.13,3.68) node {$u_1$};
	\draw [fill=red] (2.82,-0.65) circle (2.5pt);
	\draw[color=blue] (3.2,-0.78) node {$u_2$};
	\draw [fill=red] (6.14,1.56) circle (2.5pt);
	\draw[color=blue] (5.64,1.3) node {$u'$};
	\draw [fill=red] (7.28,2.64) circle (2.5pt);
	\draw[color=blue] (7.48,3.01) node {$-1$};
	\draw [fill=red] (4.64,2.34) circle (2.5pt);
	\draw[color=blue] (4.84,2.71) node {$-1$};
	\draw [fill=red] (7.74,2.32) circle (2.5pt);
	\draw[color=blue] (7.88,2.65) node {$-1$};
	\end{scriptsize}
	\end{tikzpicture}
	\caption{A $\lambda$-minimal tree and a $\lambda$-eigenvector.}
\end{figure}

Suppose that $k\in\mathbb{N}$ and $\lambda_{1},\ldots,\lambda_{k}\in\mathtt{TRAI}$. It is easy to make a tree $T$ such that  
	$\lambda_{1},\ldots,\lambda_{k}\in\,Spec(T)$. We pose the following conjecture about the minimal trees of given eigenvalues. 

\begin{conj}
	Suppose that $k\in\mathbb{N}$ and $\lambda_{1},\ldots,\lambda_{k}\in\mathtt{TRAI}\setminus\{0\}$. Then
	\begin{itemize}
		\item $\displaystyle \mathcal{T}_{min,\lambda_{1}}\nsubseteq \displaystyle \mathcal{T}_{min,\lambda_{2}}$,
		\item $\displaystyle\bigcap_{i=1}^{k}  \displaystyle \mathcal{T}_{min,\lambda_{i}}\neq \emptyset$.
	\end{itemize} 
\end{conj}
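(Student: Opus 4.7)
Before beginning, I should flag a subtlety with the first bullet: since every tree is bipartite, a standard sign-flip on one side of the bipartition turns a nowhere-zero $\lambda$-eigenvector of a tree into a nowhere-zero $(-\lambda)$-eigenvector of the same tree, so $\mathcal{T}_{min,\lambda}=\mathcal{T}_{min,-\lambda}$ and the first bullet literally fails when $\lambda_2=-\lambda_1$. I would therefore interpret it under the (presumably intended) additional hypothesis that $\lambda_2\notin\{\pm\lambda_1\}$, or more safely that $\lambda_2$ is not a Galois conjugate of $\pm\lambda_1$.

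\textbf{First bullet.}
Starting from any $T_0\in\mathcal{T}_{min,\lambda_1}$ (which exists by the lemma preceding the conjecture), apply the construction of Proposition~\ref{infmin} to produce an infinite family $\{T_r\}_{r\geq 1}\subseteq\mathcal{T}_{min,\lambda_1}$ of trees of strictly increasing order. Since there are only finitely many trees on a given number of vertices, it suffices to show that not all $T_r$ are $\lambda_2$-minimal. If $\lambda_2$ is not Galois-conjugate to $\pm\lambda_1$, then $\lambda_2\in Spec(T_r)$ is a nontrivial algebraic condition on the integer characteristic polynomial $\chi_{T_r}(x)$, and since the degrees $\deg\chi_{T_r}$ grow without bound, a counting argument should force infinitely many $T_r$ not to have $\lambda_2$ as an eigenvalue at all. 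If $\lambda_2$ is Galois-conjugate to $\lambda_1$, then $\lambda_2\in Spec(T_r)$ is automatic with $m_{T_r}(\lambda_2)=m_{T_r}(\lambda_1)=1$, but I would argue via Theorem~\ref{figen}(ii) applied to $\lambda_2$ that some structural feature of $T_r$ (for instance the repeated pendant vertices introduced in the construction of Proposition~\ref{infmin}) forces every $\lambda_2$-eigenvector of $T_r$ to vanish at some vertex.

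\textbf{Second bullet.}
Here my plan is constructive. For each $i\in[k]$ fix $T_i\in\mathcal{T}_{min,\lambda_i}$, a nowhere-zero $\lambda_i$-eigenvector $\boldsymbol{x}^{(i)}$, and a pendant vertex $v_i\in V(T_i)$. Build $T$ iteratively: at step $j$ one has a tree $T^{(j)}$ equipped with simultaneous nowhere-zero $\lambda_1,\dots,\lambda_j$-eigenvectors, and one grafts a suitably modified copy of $T_{j+1}$ onto $T^{(j)}$ to acquire $\lambda_{j+1}$-minimality while preserving $\lambda_1,\dots,\lambda_j$-minimality. The grafting would use the balancing idea behind Proposition~\ref{infmin}: attach auxiliary pendant ``correction'' vertices at the seam whose eigenvector values are free parameters, chosen to satisfy equation~\eqref{eigeneqn} for every $\lambda_i$ simultaneously at the seam vertex, while Proposition~\ref{adj11} controls the multiplicities so that the enlarged tree remains in $\mathcal{T}_{min,\lambda_i}$ for each previous $i$.

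\textbf{Expected obstacle.}
The hardest step is the preservation claim inside the induction of the second bullet: each grafting imposes $k$ coupled linear conditions on the seam parameters, and one must verify both that these are simultaneously solvable and that no entry of any $\lambda_i$-eigenvector cancels to zero in the process. I expect a Zariski-genericity argument carried out in the number field $\mathbb{Q}(\lambda_1,\dots,\lambda_k)$ to be the cleanest route, using the fact that the ``bad'' loci (zero coordinates, spurious extra multiplicities) are proper algebraic subvarieties of the parameter space. Secondarily, the Galois-conjugate case of the first bullet is delicate and likely requires a tailored analysis identifying a specific zero forced in the unique $\lambda_2$-eigenvector by the combinatorial structure of the trees produced in Proposition~\ref{infmin}.
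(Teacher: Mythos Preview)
The statement you are attempting to prove is labeled \texttt{conj} in the paper: it is posed as an open problem and the paper provides no proof whatsoever. There is therefore nothing to compare your proposal against.

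That said, your opening observation is correct and worth emphasizing: because every tree is bipartite, the standard sign-flip on one bipartition class shows $\mathcal{T}_{min,\lambda}=\mathcal{T}_{min,-\lambda}$, so taking $\lambda_2=-\lambda_1$ gives a genuine counterexample to the first bullet as literally stated. This is not a proof of the conjecture but a disproof of one part of it, and it is a point the paper does not address.

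Beyond that, what you have written is a plan, not a proof, and the gaps are real. For the first bullet (under your amended hypothesis), the claim that ``a counting argument should force infinitely many $T_r$ not to have $\lambda_2$ as an eigenvalue'' is not justified: the trees $T_r$ produced by Proposition~\ref{infmin} are highly structured, and there is no a priori reason their characteristic polynomials avoid the minimal polynomial of $\lambda_2$. The Galois-conjugate subcase is even vaguer---you assert that ``some structural feature\ldots forces every $\lambda_2$-eigenvector of $T_r$ to vanish at some vertex'' without identifying which vertex or why. For the second bullet, the grafting scheme is plausible in spirit but the ``preservation claim'' is exactly the hard part of the conjecture: you must simultaneously satisfy \eqref{eigeneqn} for all $\lambda_i$ at the seam while keeping every coordinate of every eigenvector nonzero, and invoking ``Zariski-genericity in $\mathbb{Q}(\lambda_1,\dots,\lambda_k)$'' does not by itself establish that the good locus is nonempty over that field. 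These are precisely the obstacles that make the statement a conjecture rather than a theorem.
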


\section{Minimal Trees with Respect to the Laplacian Eigenvalues}
In this section, we consider the structure of trees with respect to the Laplacian eigenvectors. First, we recall and generalize two theorems on the integer Laplacian eigenvalues.
\begin{prop}\cite[Proposition 6]{GM}\label{integer}
	Let $G$ be a graph with $n$ vertices and $t\geq 1$ spanning trees. If $\mu$ is a positive integer eigenvalue of $L_{G}$, then $\mu |nt$. If $G$ is Laplacian integral, then $\mu^{m}|nt$, where $m=m_{L_{G}}(\mu)$.
\end{prop}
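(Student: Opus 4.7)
The plan is to combine Kirchhoff's Matrix-Tree Theorem with a short divisibility argument on the characteristic polynomial of $L_{G}$. By the Matrix-Tree Theorem, if $0 = \mu_1 < \mu_2 \leq \cdots \leq \mu_n$ are the Laplacian eigenvalues of $G$, then
\[
nt \;=\; \prod_{i=2}^{n} \mu_i.
\]
Since $L_{G}$ has integer entries, the characteristic polynomial $\chi(x) = \det(xI - L_{G})$ lies in $\mathbb{Z}[x]$, and because $0$ is always a Laplacian eigenvalue we can factor $\chi(x) = x \cdot p(x)$ with $p(x) \in \mathbb{Z}[x]$ (polynomial long division, or Gauss's lemma since $x$ is monic). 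Evaluating at $x=0$ gives $p(0) = (-1)^{n-1}\,nt$.

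For the first claim, let $\mu$ be a positive integer eigenvalue of $L_{G}$. Then $(x-\mu) \in \mathbb{Z}[x]$ is monic and divides $p(x)$ in $\mathbb{Q}[x]$, so by Gauss's lemma it divides $p(x)$ in $\mathbb{Z}[x]$. Writing $p(x) = (x-\mu)\,q(x)$ with $q(x) \in \mathbb{Z}[x]$ and plugging in $x = 0$ yields $p(0) = -\mu\,q(0)$, hence $\mu \mid p(0) = \pm nt$, i.e., $\mu \mid nt$. For the second claim, let $m = m_{L_{G}}(\mu)$; since $\mu$ is a positive integer, $(x-\mu)^{m}$ is a monic integer polynomial dividing $p(x)$ in $\mathbb{Q}[x]$, so (again by Gauss's lemma) it divides $p(x)$ in $\mathbb{Z}[x]$. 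Writing $p(x) = (x-\mu)^{m} Q(x)$ with $Q(x) \in \mathbb{Z}[x]$ and evaluating at $x=0$ gives $p(0) = (-\mu)^{m} Q(0)$, so $\mu^{m} \mid nt$.

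There is no real obstacle here beyond invoking the Matrix-Tree Theorem and applying Gauss's lemma twice. In fact this is the mildly interesting point: the "Laplacian integral" hypothesis in the second part is not actually needed for the argument; the divisibility $(x-\mu)^{m} \mid p(x)$ in $\mathbb{Z}[x]$ holds whenever $\mu$ is a positive integer eigenvalue of multiplicity $m$, regardless of whether the other eigenvalues are rational. This is presumably the "generalization" alluded to, and it comes essentially for free from the same proof.
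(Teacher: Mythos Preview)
Your proof is correct. Note, however, that the paper does not give its own proof of this proposition: it is quoted verbatim from \cite{GM}. What the paper \emph{does} prove is the generalization in Proposition~\ref{genintegergraph}, and the argument there is essentially the same as yours: write the characteristic polynomial as $x\prod_i f_i(x)^{m_i}$, use the Matrix-Tree Theorem to identify the constant coefficient with $\pm nt$, and read off the divisibility. The only difference is that the paper factors over minimal polynomials $f_i$ rather than linear factors $(x-\mu)$, which is exactly what lets it replace $\mu^m$ by $({\rm Norm}(\mu))^m$ for arbitrary (not necessarily integer) eigenvalues. Your closing remark---that the ``Laplacian integral'' hypothesis is unnecessary for the $\mu^m\mid nt$ conclusion when $\mu$ is an integer---is correct and is subsumed by that generalization.
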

A generalization of Proposition \ref{integer} is the following proposition. 
\begin{prop}\label{genintegergraph}
	Let $G$ be a connected graph on $n$ vertices and $t$ spanning trees. If $\mu$ is a nonzero eigenvalue of $L_{G}$, then $({\rm Norm}(\mu))^{m}|nt$, where $m=m_{L_{G}}(\mu)$.
\end{prop}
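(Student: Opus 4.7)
The plan is to combine the Matrix-Tree Theorem with a Galois-theoretic argument about the conjugates of $\mu$.

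First I would recall the Matrix-Tree Theorem in its eigenvalue form: for a connected graph $G$ on $n$ vertices with $t$ spanning trees, the nonzero Laplacian eigenvalues (listed with multiplicity) satisfy
\[
\prod_{\lambda\neq 0} \lambda \;=\; nt.
\]
This is the key quantitative identity.

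Next, since $L_G\in M_n(\mathbb{Z})$, the characteristic polynomial $p(x)=\det(xI-L_G)$ lies in $\mathbb{Z}[x]$. Hence the minimal polynomial $f(x)$ of the algebraic integer $\mu$ also lies in $\mathbb{Z}[x]$ and divides $p(x)$; since $f$ is irreducible over $\mathbb{Q}$, the Galois conjugates $\mu^{(1)}=\mu,\mu^{(2)},\ldots,\mu^{(d)}$ (with $d=\deg f$) are all eigenvalues of $L_G$. Writing $p(x)=\prod_i f_i(x)^{m_i}$ as a factorization into powers of distinct irreducible polynomials in $\mathbb{Z}[x]$, one sees that each conjugate $\mu^{(j)}$ occurs in $p$ with the same multiplicity $m=m_{L_G}(\mu)$, since all of them are roots of the common factor $f(x)^m$ (and of no other $f_i$).

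Now I would split the Matrix-Tree product according to Galois orbits. The contribution of the orbit of $\mu$ is
\[
\prod_{j=1}^{d}(\mu^{(j)})^{m} \;=\; \bigl({\rm Norm}(\mu)\bigr)^{m},
\]
which is an integer (it equals $\pm f(0)^m$). Therefore
\[
nt \;=\; \bigl({\rm Norm}(\mu)\bigr)^{m}\cdot Q,
\qquad
Q=\prod_{\lambda\neq 0,\, \lambda\not\sim \mu}\lambda,
\]
where the product defining $Q$ runs over the remaining nonzero eigenvalues (with multiplicity). The quotient $Q$ is a product of algebraic integers, hence an algebraic integer; and by construction it is Galois-invariant (it is the Matrix-Tree product with one full orbit's contribution removed, and both $nt$ and $({\rm Norm}(\mu))^m$ are rational), so $Q\in\mathbb{Q}$. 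A rational algebraic integer is a rational integer, so $Q\in\mathbb{Z}$, yielding $({\rm Norm}(\mu))^{m}\mid nt$.

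The only subtlety — not really an obstacle — is checking that $({\rm Norm}(\mu))^m$ is a positive integer that genuinely divides $nt$ (and not just a rational whose absolute value divides it). Positivity of ${\rm Norm}(\mu)$ follows because $L_G$ is positive semi-definite, so every eigenvalue, including every conjugate of $\mu$, is real and nonnegative; as $\mu\neq 0$ and $f$ is irreducible, every conjugate is strictly positive. The rest is bookkeeping with the factorization of $p(x)$ in $\mathbb{Z}[x]$.
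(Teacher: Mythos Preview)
Your proof is correct and follows essentially the same route as the paper: factor the characteristic polynomial of $L_G$ over $\mathbb{Z}$ into irreducible pieces, use the Matrix--Tree identity $\prod_{\lambda\neq 0}\lambda=nt$, and read off that $(\mathrm{Norm}(\mu))^m$ divides $nt$. The only cosmetic difference is that the paper simply evaluates the factorization $\psi(x)=x\prod_i f_i(x)^{m_i}$ at $x=0$ and notes each $f_i(0)\in\mathbb{Z}$, whereas you phrase the same computation in Galois-orbit language and recover $Q\in\mathbb{Z}$ via the ``rational algebraic integer'' argument; your extra remark on positivity of $\mathrm{Norm}(\mu)$ is a nice clarification the paper omits.
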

\begin{proof}
	Suppose that $\psi(x)$ is the characteristic polynomial of $L_{G}$. So,  $\psi(x)=x\prod_{i}f_{i}(x)^{m_i}=x(x^{n-1}-\cdots +(-1)^{n-1}nt)$, where $\{f_{i}(x)\}$ are the minimal polynomials of the nonzero eigenvalues of $L_{G}$. Hence $\prod_{i}f_{i}(0)^{m_i}=nt$. Therefore, $({\rm Norm}(\mu))^{m}|nt$.
\end{proof}
Another well-known theorem on the multiplicity of integer Laplacian eigenvalues of trees is the following theorem.
\begin{thm}\cite[Theorem 2.1]{GMS}\label{integertree}
	Suppose $T$ is a tree on $n$ vertices. If  $\mu>1$ is an integer eigenvalue of $L_{T}$ with corresponding eigenvector $u$, then
	
	\begin{enumerate}[i.]
		\item
		$\mu|n$;\hspace*{1cm}
		\item
		no coordinate of $u$ is zero;\hspace*{1cm}
		\item
		$m_{L_{T}}(\mu)=1$.
	\end{enumerate}
\end{thm}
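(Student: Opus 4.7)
The plan is to exploit the integer structure of the problem. Since $\mu$ is an integer and $L_T$ has integer entries, the kernel of $L_T-\mu I$ admits a basis of integer vectors; so I would fix an integer $\mu$-eigenvector $u$ with $\gcd(u_1,\dots,u_n)=1$.

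The key step is a discrete divergence identity. For any edge $e=wy$ of $T$, let $T_e^y$ denote the component of $T-e$ containing $y$. Summing the eigenvector equation $(L_T u)_x=\mu u_x$ over $x\in T_e^y$, every edge of $T$ with both endpoints in $T_e^y$ contributes the same amount to $\sum_{x\in T_e^y} d_x u_x$ as to $\sum_{x\in T_e^y}\sum_{z\sim x} u_z$, so its contribution cancels; only the boundary edge $wy$ survives. After this bookkeeping one obtains
\[
u_y - u_w \;=\; \mu \sum_{x\in T_e^y} u_x .
\]
Hence $u_y \equiv u_w \pmod{\mu}$ for every edge $wy$, and by connectivity of $T$ there is a single residue class $c$ with $u_v \equiv c \pmod{\mu}$ for all $v\in V(T)$.

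From this single congruence the theorem falls out. Since $\mu\neq 0$, $u$ is orthogonal to $\mathbf{1}$, so $\sum_v u_v = 0$; combined with $u_v\equiv c\pmod{\mu}$ this forces $nc\equiv 0\pmod{\mu}$. Writing $u_v = c + k_v\mu$ shows that $\gcd(u_v,\mu)=\gcd(c,\mu)$ is independent of $v$, hence $\gcd(c,\mu)=\gcd\!\bigl(\gcd(u_1,\dots,u_n),\mu\bigr)=1$. Therefore $\mu\mid n$, which is (i) (this also follows directly from Proposition \ref{integer} with $t=1$). If some $u_{v_0}=0$, then $c\equiv 0\pmod{\mu}$, contradicting $\gcd(c,\mu)=1$ and $\mu>1$; this is (ii). Finally (iii) is immediate from Proposition \ref{mul1}: $L_T$ is an irreducible acyclic matrix (since $T$ is a connected tree) and, by (ii), admits the nowhere-zero $\mu$-eigenvector $u$, so $m_{L_T}(\mu)=1$.

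The main obstacle in this plan is verifying the divergence identity above—one must carefully check that every internal edge's contribution to $\sum_x d_x u_x$ is exactly canceled by its contribution to $\sum_x\sum_{z\sim x} u_z$, and that only the boundary edge $wy$ leaves the residue $u_y-u_w$. This is a routine telescoping computation, and once it is in hand the whole theorem reduces to elementary $\gcd$ arithmetic modulo $\mu$.
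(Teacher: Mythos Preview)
Your argument is correct. The divergence identity $u_y - u_w = \mu\sum_{x\in T_e^y} u_x$ is verified exactly as you outline (each internal edge contributes zero to $\sum_x (L_Tu)_x$, and only the crossing edge $wy$ survives), and from it the congruence $u_v\equiv c\pmod\mu$, the coprimality $\gcd(c,\mu)=1$, and all three conclusions follow cleanly. One small remark: you prove (ii) only for the chosen primitive integer eigenvector, but once (iii) gives a one-dimensional eigenspace, every $\mu$-eigenvector is a scalar multiple of it and hence also nowhere-zero.

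The paper does not prove this statement directly---it is quoted from \cite{GMS}---but it does prove the generalization Theorem~\ref{genintegertree} (replacing the integer $\mu$ by any Laplacian eigenvalue with ${\rm Norm}(\mu)>1$), and that proof takes a genuinely different route. For (ii) the paper argues by contradiction: if some coordinate of a $\mu$-eigenvector vanishes, one restricts to a subtree $T'$ carrying exactly one boundary edge across the zero set, obtaining a matrix $L=L_{T'}+{\bf e}_i{\bf e}_i^{T}$ which still has $\mu$ as an eigenvalue; hence ${\rm Norm}(\mu)\mid\det(L)$. But $\det(L)=\det(L_{T'})+{\bf e}_i^{T}adj(L_{T'}){\bf e}_i=1$, contradicting ${\rm Norm}(\mu)>1$. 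Part (i) likewise comes from the characteristic-polynomial factorization of Proposition~\ref{genintegergraph}, and (iii) from Proposition~\ref{mul1} exactly as in your argument.

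Your approach is more elementary---no determinants, no Matrix-Tree identity---and yields (i), (ii), (iii) simultaneously from a single congruence. Its cost is that it relies essentially on $\mu$ being a rational integer, so that reduction modulo $\mu$ of an integral eigenvector makes sense; the paper's determinantal argument, while heavier, carries over verbatim to algebraic integers and gives the stronger Theorem~\ref{genintegertree}.
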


We prove the following generalization of Theorem \ref{integertree}.

\begin{thm}\label{genintegertree}
	Let $T$ be a tree on $n$ vertices. If $\mu$ is an eigenvalue of $L_{T}$ and $ Norm(\mu)>1$, then
	
	\begin{enumerate}[i.]
		\item
		${\rm Norm}(\mu)|n$;\hspace*{1cm}
		\item
		no coordinate of a $\mu$-eigenvector is zero;\hspace*{1cm}
		\item
		$m_{L_{T}}(\mu)=1$.
	\end{enumerate}
\end{thm}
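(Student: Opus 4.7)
Part (i) is immediate from Proposition \ref{genintegergraph} applied to $G=T$: a tree has $t=1$ spanning tree, so that proposition yields $(\mathrm{Norm}(\mu))^{m_{L_T}(\mu)}\mid n$ and in particular $\mathrm{Norm}(\mu)\mid n$.

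For part (ii) I argue by contradiction. Suppose some $\mu$-eigenvector has a zero coordinate; then by Lemma \ref{nonz}, $\mathcal{N}_{\mu}(L_T)\neq\emptyset$. By Theorem \ref{figen}(i), $L_T\in\mathcal{A}_\mu$, so $L_T$ decomposes into pieces from $\mathfrak{Z}_\mu\cup\mathfrak{Min}_\mu\cup\mathfrak{Link}_\mu$ with at least one linking vertex. Because every linking vertex has at least two $\mathfrak{Min}_\mu$-neighbors in the tree of pieces (rule~(3) of the construction) and $\mathfrak{Min}_\mu$-pieces cannot be adjacent to $\mathfrak{Z}_\mu$-pieces (rule~(2)), restricting to the subforest on $\mathfrak{Link}_\mu\cup\mathfrak{Min}_\mu$ gives a forest whose $\mathfrak{Link}_\mu$-nodes still have degree $\geq 2$; its leaves are therefore all $\mathfrak{Min}_\mu$-pieces, and each such leaf is also a leaf of the full tree of pieces. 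Pick such a pendant piece $\mathbf{m}\in\mathfrak{Min}_\mu$ adjacent to exactly one linking vertex $v^{*}$; set $T'=T[V_{\mathbf{m}}]$ and let $u\in V(T')$ be the unique vertex with $uv^{*}\in E(T)$. Then $\mathbf{m}=L_T[V(T')]=L_{T'}+E_{uu}$.

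The pivotal step is computing $\det(\mathbf{m})$. By the matrix-determinant lemma and the matrix-tree theorem,
\[
\det(\mathbf{m})=\det(L_{T'})+\det(L_{T'}^{(u,u)})=0+1=1.
\]
Hence $\mathbf{m}$ is a positive definite integer matrix with unit determinant. Its characteristic polynomial $\chi_{\mathbf{m}}(x)\in\mathbb{Z}[x]$ factors as $\prod_j f_j(x)^{m_j}$ with each $f_j$ monic irreducible in $\mathbb{Z}[x]$, and the minimal polynomial $f$ of $\mu$ appears among the $f_j$. Evaluating at $0$ gives $\prod_j f_j(0)^{m_j}=(-1)^{|V(T')|}\det(\mathbf{m})=\pm 1$; since positive definiteness makes every root of every $f_j$ a positive real, each $|f_j(0)|$ is a positive integer, forcing $|f_j(0)|=1$ for every factor. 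In particular, $\mathrm{Norm}(\mu)=|f(0)|=1$, contradicting $\mathrm{Norm}(\mu)>1$ and establishing (ii).

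Part (iii) then follows at once: by (ii) every $\mu$-eigenvector of $L_T$ is nowhere-zero, and since $L_T$ is an acyclic irreducible matrix ($T$ is connected), Proposition \ref{mul1} yields $m_{L_T}(\mu)=1$. The main obstacle I foresee is the combinatorial step of locating a pendant $\mathfrak{Min}_\mu$-piece in the tree of pieces, which hinges on properly exploiting rules~(2) and~(3) of the construction; once it is in hand, the remaining determinant-and-factorization argument is short.
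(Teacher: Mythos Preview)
Your proof is correct and shares the paper's core strategy: locate a principal submatrix of $L_T$ of the form $L_{T'}+\mathbf{e}_u\mathbf{e}_u^{T}$ that still has $\mu$ as an eigenvalue, compute its determinant to be $1$ via the matrix-tree theorem, and conclude $\mathrm{Norm}(\mu)=1$.

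The only difference is how that submatrix is found. The paper argues directly: take any $\mu$-eigenvector $\boldsymbol{\xi}$ with a zero coordinate, delete the zero rows and columns, and assert that among the resulting blocks at least one has the form $L_{T'}+\mathbf{e}_i\mathbf{e}_i^{T}$ (i.e., exactly one half-edge). This is stated without justification, though it follows quickly from the eigenvector equation: in the contracted bipartite tree on zero- and nonzero-components, a leaf cannot be a zero-component (the equation at its unique boundary vertex would force a nonzero neighbor to vanish), so some nonzero-component is a leaf. You instead invoke Theorem~\ref{figen} to get the full $\mathcal{A}_\mu$-decomposition and then argue in the tree of pieces that a pendant $\mathfrak{Min}_\mu$-block exists. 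Your route is heavier but makes the combinatorial step fully explicit; the paper's route is shorter but leaves that step to the reader. Parts (i) and (iii) are identical in both proofs.
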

\begin{proof}
i. It is a corollary of Proposition \ref{genintegergraph}.\\
ii. Suppose by contradiction that $\bf{\xi}$ is a $\mu$-eigenvector with some zero coordinates. By deleting the rows and columns corresponding to the zero entries of $\bf{\xi}$, we have at least one submatrix $L$ of $L_{T}$ that 
\[
L=L_{T'}+\bf{e}_{i}\bf{e}_{i}^{T},
\]
for a subtree $T'$ and an index $i$, such that $L$ has a nowhere-zero $\mu$-eigenvector. Therefore ${\rm Norm}(\mu)|det(L)$. But $det(L)=det(L_{T'})+{\bf{e}}_{i}^{T}adj(L_{T'}){\bf{e}}_{i}=1$ (see \cite[Problem 3.1]{Pr}) and hence this is a contradiction.\\
iii. By previous item and Proposition \ref{mul1}, we have $m_{L_{T}}(\mu)=1$.
\end{proof}

%----------------------------------------------------------------

\begin{definition}
	A \textit{half-edge} is an edge with one end-vertex. A \textbf{cut-tree} is obtained by adding some half-edges to a tree. For an integer $k\geq 0$, a $k$-cut-tree is a cut-tree with $k$ half-edges. 
\end{definition}

\begin{example}
Every tree is a $0$-cut-tree. In the figure below, there are a $1$-cut-tree and a $4$-cut-tree.
	\begin{figure}[H]
		\centering
		\begin{tikzpicture}[scale=1.2,line cap=round,line join=round,>=triangle 45,x=1.0cm,y=1.0cm]

		\draw [line width=1.pt] (0.46,0.4)-- (-0.14,0.98);
		\draw [line width=1.pt] (2.1,0.4)-- (0.46,0.4);
		\draw [line width=1.pt] (0.46,0.4)-- (-0.14,-0.04);
		\draw (1.46,-0.38) node[anchor=north west] {$\mathfrak{T}_c$: $4$-cut-tree};
		\draw [line width=1.pt] (2.1,0.4)-- (3.92,0.4);
		\draw [line width=1.pt] (3.92,0.4)-- (4.6,0.98);
		\draw [line width=1.pt] (3.92,0.4)-- (4.6,-0.18);
		\draw [line width=1.pt] (-2.58,0.31)-- (-3.54,0.31);
		\draw (-3.7,-0.42) node[anchor=north west] {$\mathfrak{T}'_c$: $1$-cut-tree};
		\begin{scriptsize}
		\draw [fill=red] (0.46,0.4) circle (2.5pt);
		%	\draw[color=blue] (0.6,0.6) node {$u$};
		\draw[] (-0.14,0.98) node {\Large\LeftScissors};
		\draw[] (-0.14,-0.04) node {\Large\LeftScissors};		
		\draw [fill=red] (2.1,0.4) circle (2.5pt);
		%	\draw[color=blue] (2.24,0.6) node {$v$};
		\draw [fill=red] (3.92,0.4) circle (2.5pt);
		%	\draw[color=blue] (3.86,0.6) node {$w$};
		\draw[] (4.6,-0.18) node {\Large\RightScissors};
		\draw[] (4.6,0.98) node {\Large\RightScissors};		
		\draw [fill=red] (-2.58,0.31) circle (2.5pt);
		%	\draw[color=blue] (-2.56,0.6) node {$o$};
		\draw[] (-3.54,0.31) node {\Large\LeftScissors};
		\end{scriptsize}
		\end{tikzpicture}
		\caption{Two cut-trees.}
	\end{figure}
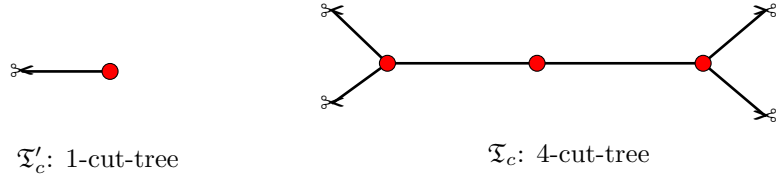	
	The adjacency matrix and the Laplacian matrix of the cut-trees above are
	\[
	A_{\mathfrak{T}'_c}=\begin{bmatrix}
	0
	\end{bmatrix},\,
	L_{\mathfrak{T}'_c}=\begin{bmatrix}
	1
	\end{bmatrix},\,
	A_{\mathfrak{T}_c}=\begin{bmatrix}
	0&1  & 0 \\ 
	1& 0 & 1 \\ 
	0& 1 & 0
	\end{bmatrix},\,
	L_{\mathfrak{T}_c}=\begin{bmatrix}
	3& -1  & 0 \\ 
	-1& 2 & -1 \\ 
	0& -1 & 3
	\end{bmatrix}. 
	\]	 
\end{example}

\begin{definition}
	Let $\mu\in \mathtt{TPAI}$. A cut-tree $\mathfrak{T}_{c}$ is a $\mu$-L-minimal cut-tree, if $L_{\mathfrak{T}_{c}}$ has a nowhere-zero $\mu$-eigenvector $\boldsymbol{x}$ such that for every $uv\in E(\mathfrak{T}_{c})$, $\boldsymbol{x}_{u}\neq \boldsymbol{x}_{v}$.
\end{definition}
We denote by $\mathcal{LT}_{min,\mu}^{k}$  the set of all $\mu$-L-minimal $k$-cut-trees and $\mathcal{LT}_{min,\mu}:=\displaystyle\bigcup_{k=0}^{+\infty}\mathcal{LT}_{min,\mu}^{k}$. For brevity, we use $\mu$-L-minimal tree instead of $\mu$-L-minimal $0$-cut-tree.
\begin{remark}
	For every $\mu$-L-minimal cut-tree $\mathfrak{T}_{c}$ and for every $uv\in E(\mathfrak{T}_{c})$, we have $m_{L_{\mathfrak{T}_{c}-uv}}(\mu)=0$.
\end{remark}

\begin{lem}
	Let $\mu$ be a Laplacian eigenvalue of a tree $T$. If a $\mu$-eigenvector $\bf{\xi}$ vanishes at a vertex of $T$, then $\mathcal{LT}_{min,\mu}^{1} \neq \emptyset$.
\end{lem}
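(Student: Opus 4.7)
The plan is to apply Theorem~\ref{figen} to $L_T$, extract from the resulting $\mathcal{A}_\mu$-decomposition a $1$-cut-tree that carries a nowhere-zero $\mu$-eigenvector, and then repeatedly delete regular edges on which that eigenvector takes equal values until an element of $\mathcal{LT}_{min,\mu}^1$ is produced.

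First I would observe that $T\notin\mathfrak{Min}_\mu$: otherwise Proposition~\ref{mul1} would force $m_\mu(L_T)=1$ with every $\mu$-eigenvector nowhere zero, contradicting $\xi_v=0$. Hence Theorem~\ref{figen} gives a decomposition of $L_T$ in $\mathcal{A}_\mu$ that uses at least one linking vertex, and therefore at least two pieces from $\mathfrak{Min}_\mu$ by condition~(3) in the construction of $\mathcal{A}_\mu$. Viewing the decomposition as a meta-tree whose nodes are the pieces and whose edges are the connections, I claim it has at least two $\mathfrak{Min}_\mu$-meta-leaves. Indeed, iteratively remove every $\mathfrak{Z}_\mu$-meta-leaf; since by condition~(2) each $\mathfrak{Min}_\mu$-piece is adjacent only to linking vertices, and each linking vertex retains its $\geq 2$ $\mathfrak{Min}_\mu$-neighbours throughout the process, the trimmed meta-tree is a tree on at least three vertices (one linking vertex together with its two $\mathfrak{Min}_\mu$-neighbours) whose leaves are exactly the $\mathfrak{Min}_\mu$-meta-leaves of the original. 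Choose such a meta-leaf $\mathbf{m}$, with underlying subtree $T'\subseteq T$; since $\mathbf{m}$ is connected to its unique linking neighbour by a single edge of $T$, the associated cut-tree $Y:=T'_{c}$ carries exactly one half-edge, and $L_Y=\mathbf{m}\in\mathfrak{Min}_\mu$ provides a nowhere-zero $\mu$-eigenvector $\eta$ on the $1$-cut-tree $Y$.

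Finally, I would force $\eta$ to take distinct values on every regular edge of $Y$. If it already does, then $Y\in\mathcal{LT}_{min,\mu}^1$ and we are done. Otherwise choose $uu'\in E(Y)$ with $\eta_u=\eta_{u'}$, delete this edge, and let $Y^{\ast}$ be the component that still carries the half-edge; then $Y^{\ast}$ is a $1$-cut-tree with $|V(Y^{\ast})|<|V(Y)|$. Since deleting $uu'$ drops $L_Y(u,u)$ by $1$ and the sum $\sum_{w\sim u}\eta_w$ by $\eta_{u'}$, a direct computation gives
\[
(L_{Y^{\ast}}\eta|_{Y^{\ast}})_u = (L_Y\eta)_u-\eta_u+\eta_{u'} = \mu\eta_u,
\]
using $\eta_{u'}=\eta_u$; the symmetric computation applies at $u'$, and at every other vertex of $Y^{\ast}$ the Laplacian eigenvalue equation is unchanged, so $\eta|_{Y^{\ast}}$ is a nowhere-zero $\mu$-eigenvector of $L_{Y^{\ast}}$. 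Replacing $Y$ by $Y^{\ast}$ and iterating strictly reduces $|V(Y)|$, so the procedure terminates at a $1$-cut-tree in $\mathcal{LT}_{min,\mu}^1$.

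The main obstacle is arranging that the $\mathfrak{Min}_\mu$-piece extracted from the decomposition of $L_T$ produces a cut-tree with exactly one half-edge rather than several; this is exactly what the meta-leaf argument delivers, after which the edge-deletion loop is a routine rewriting of the Laplacian eigenvalue equation.
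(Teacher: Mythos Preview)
Your proof is correct. The paper takes a more direct route: it simply deletes all vertices on which the given eigenvector $\xi$ vanishes and asserts that among the resulting principal submatrices at least two are Laplacian matrices of elements of $\mathcal{LT}_{min,\mu}^{1}$. This avoids invoking Theorem~\ref{figen} and your meta-tree argument, but the paper's one-line proof leaves two points unjustified: first, why at least one (in fact two) of the nonzero components is joined to the zero set by exactly one edge, so that the associated cut-tree carries exactly one half-edge; and second, why the restricted eigenvector takes distinct values on every remaining edge, as the definition of $\mu$-L-minimal requires. The first point can be recovered by a short tree argument (contract each nonzero component to a point; the eigenvector equation forces every zero leaf to have a zero neighbour, so after pruning such leaves one is left with a tree whose leaves are nonzero components of degree one). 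The second point needs precisely the edge-deletion loop you supply, which the paper omits. So your route through the $\mathcal{A}_\mu$-decomposition is heavier machinery than strictly necessary for locating the $1$-cut-tree, but your finishing step is what actually completes the argument in either approach.
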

\begin{proof}
Since $L_{T}$ is an acyclic matrix, by removing the indices of $L_{T}$ that are corresponding to the zero entries of $\bf{\xi}$, we obtain at least two Laplacian matrices of two elements of  $\mathcal{LT}_{min,\mu}^{1}$.
\end{proof}

\begin{lem}
	Let $s$ be a positive Laplacian eigenvalue of a tree $T$. If $s>1$, then $|\mathcal{LT}_{min,s}^{0}|= +\infty$.
\end{lem}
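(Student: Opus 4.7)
My approach mirrors that of Proposition~\ref{infmin}: first exhibit one element of $\mathcal{LT}_{min,s}^{0}$, then use it to produce infinitely many. For existence, given $T$ with Laplacian eigenvalue $s>1$, I would apply the decomposition philosophy of Theorem~\ref{figen} to the acyclic matrix $L_{T}$. Choosing an $s$-eigenvector $\eta$ of maximal support via Lemma~\ref{nonz}, the components of $T$ outside $\mathcal{N}_{s}^{\circ}(L_{T})$ decompose into L-minimal cut-tree pieces. The hypothesis $s>1$, via a pendant's equation $\eta_{v}-\eta_{u}=s\eta_{v}$, forces $\eta_{u}=(1-s)\eta_{v}$, so $\eta_{u}$ and $\eta_{v}$ have opposite signs and are in particular distinct; this produces at least one piece $T_{0}\in\mathcal{LT}_{min,s}^{0}$ (pairing $1$-cut-tree pieces through their half-edges if necessary).

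Given such $T_{0}$ with nowhere-zero eigenvector $\xi$, the plan is to construct a sequence of strictly larger trees $T_{k}\in\mathcal{LT}_{min,s}^{0}$ by the analogue of the $2k-1$-copy construction used in Proposition~\ref{infmin}. One takes many scaled copies of $T_{0}$ glued at a common new linking vertex, with extra auxiliary pendants attached to absorb the degree change at the linking vertex. The scaling factor of each copy and the value on each new pendant are forced by their local Laplacian equations (a new pendant $w$ adjacent to a vertex $x$ satisfies $\xi_{w}=\xi_{x}/(1-s)$, which is well defined since $s\ne 1$), while the balance at the linking vertex fixes the remaining freedom. Since $|V(T_{k})|\to\infty$, the set $\mathcal{LT}_{min,s}^{0}$ will be infinite.

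The hard part will be that, unlike in Proposition~\ref{infmin}, the Laplacian diagonal records vertex degrees, so identifying or joining vertices perturbs the eigenvalue equation at the modified vertex. A naive identification of $k$ copies of a pendant into a single degree-$k$ vertex yields an equation that ultimately forces $s=0$, so one cannot directly imitate the adjacency construction: auxiliary pendants whose values exactly compensate the extra degree must be inserted and carefully accounted for. Tracking these corrections while simultaneously preserving the nowhere-zero condition and the distinct-adjacent-values condition is the heart of the argument; both properties are secured by the strict opposite-sign behavior of $\xi_{v},\xi_{u}$ at pendants, itself a direct consequence of $s>1$.
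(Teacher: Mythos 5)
Your proposal has the right local observations (the pendant relation $\eta_u=(1-s)\eta_v$ and the recognition that the Laplacian diagonal is the real obstacle), but both halves of the plan have genuine gaps. For existence, your route through the decomposition of Theorem \ref{figen} is both unnecessary and broken. It is unnecessary because the paper gets a $0$-cut-tree for free: it invokes Theorem \ref{integertree} (reading $s$ as an \emph{integer} greater than $1$, a fact your plan never uses but which the paper's construction needs), so every $s$-eigenvector of $L_T$ is nowhere zero and $T$ itself already lies in $\mathcal{LT}_{min,s}^{0}$; no passage through $\mathcal{N}_{s}^{\circ}(L_T)$ is required. It is broken because ``pairing $1$-cut-tree pieces through their half-edges'' does not preserve eigenvectors: fusing the half-edges at $u$ and at $w$ into an actual edge $uw$ leaves the diagonal entries unchanged but adds the term $-\xi_w$ to the equation at $u$, and $\xi_w\neq 0$ precisely because the pieces are nowhere zero, so the glued vector is no longer an eigenvector.

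The second half is where the lemma actually lives, and your sketch does not close. If a new linking vertex $c$ with free value $\gamma$ is joined to the pendant $v$ of an unmodified copy of $T$, the equation at $v$ changes from the already satisfied original one only by the term $\xi_v-\gamma$, which forces $\gamma=\xi_v$; the edge $cv$ then violates the distinct-adjacent-values requirement in the definition of an $s$-L-minimal cut-tree. Nor can auxiliary pendants ``absorb'' a degree increase at a vertex $x$ whose value is already pinned: each such pendant is forced to carry $\xi_x/(1-s)$ and contributes $\xi_x\cdot s/(s-1)$ to the defect at $x$, always with the same sign, so adding more pendants only worsens the imbalance. The paper's construction evades both problems with one precise arithmetic device that your plan does not reach: identify the pendant vertices of \emph{two} copies of $T$, so the fused vertex has degree $2$, value $1$, two neighbours of value $1-s$, and a defect of exactly $s$ in its equation; then join $s+1$ such doubled copies to a single new central vertex carrying the value $1+s$. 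One checks $3\cdot 1-2(1-s)-(1+s)=s$ at each fused vertex and $(s+1)(1+s)-(s+1)\cdot 1=s(1+s)$ at the centre, and all values are nonzero with distinct values across every edge. Until your ``auxiliary pendants'' are replaced by something this concrete, the infinitude claim is not established.
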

\begin{proof}
By Theorem \ref{integertree}, $T$ is an $s$-L-minimal tree. Suppose that $v$ is a pendant vertex of $T$ with its neighbor $u$. Assume that $\bf{\xi}$ is the  $s$-eigenvector that  ${\bf{\xi}}_{v}=1$ and ${\bf{\xi}}_{u}=1-s$. By (\ref{eigeneqn}), it is easy to see that the following construction, as shown in figure below, is a new $s$-L-minimal tree.
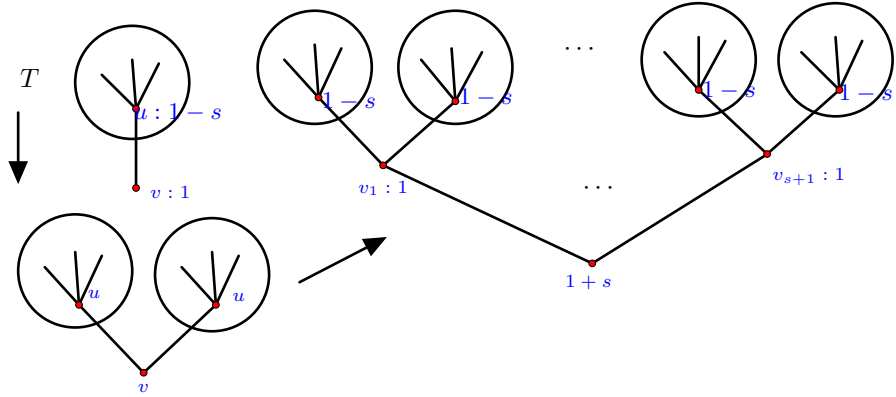
\begin{figure}[H]
	\centering
	\begin{tikzpicture}[scale=.5,line cap=round,line join=round,>=triangle 45,x=1.0cm,y=1.0cm]
	\draw [line width=1.pt] (-5.2,0.3)-- (-4.6,1.6);
	\draw [line width=1.pt] (-5.3,1.7)-- (-5.2,0.3);
	\draw [line width=1.pt] (-5.2,0.3)-- (-6.1,1.3);
	\draw [line width=1.pt] (-5.3,1.1) circle (1.5cm);
	\draw [line width=1.pt] (-3.5,-1.5)-- (-1.6,0.2);
	\draw [line width=1.pt] (-1.6,0.2)-- (-0.9,1.5);
	\draw [line width=1.pt] (-1.7,1.6)-- (-1.6,0.2);
	\draw [line width=1.pt] (-1.6,0.2)-- (-2.5,1.3);
	\draw [line width=1.pt] (-1.6,1.1) circle (1.5cm);
	\draw [line width=1.pt] (-5.2,0.3)-- (-3.5,-1.5);
	\draw [line width=1.pt] (4.8,0.5)-- (5.5,1.8);
	\draw [line width=1.pt] (4.8,1.9)-- (4.8,0.5);
	\draw [line width=1.pt] (4.8,0.5)-- (3.9,1.5);
	\draw [line width=1.pt] (4.8,1.3) circle (1.5cm);
	\draw [line width=1.pt] (6.6,-1.2)-- (8.5,0.5);
	\draw [line width=1.pt] (8.5,0.5)-- (9.1,1.8);
	\draw [line width=1.pt] (8.4,1.9)-- (8.5,0.5);
	\draw [line width=1.pt] (8.5,0.5)-- (7.5,1.5);
	\draw [line width=1.pt] (8.4,1.3) circle (1.5cm);
	\draw [line width=1.pt] (4.8,0.5)-- (6.6,-1.2);
	\draw [line width=1.pt] (-3.5,-1.5)-- (2.0,-4.1);
	\draw [line width=1.pt] (2.0,-4.1)-- (6.61,-1.2);
	\draw (1.5,-1.7) node[anchor=north west] {$\cdots$};
	\draw [line width=1.pt] (-10.0,-0.0)-- (-9.4,1.2);
	\draw [line width=1.pt] (-10.1,1.3)-- (-10.0,-0.0);
	\draw [line width=1.pt] (-10.0,-0.0)-- (-10.9,0.9);
	\draw [line width=1.pt] (-10.1,0.7) circle (1.5cm);
	\draw [line width=1.pt] (-10.0,-0.0)-- (-10.0,-2.1);
	\draw (1.0,2.0) node[anchor=north west] {$\cdots$};
	\draw [line width=1.pt] (-11.5,-5.2)-- (-10.9,-3.9);
	\draw [line width=1.pt] (-11.6,-3.8)-- (-11.5,-5.2);
	\draw [line width=1.pt] (-11.5,-5.2)-- (-12.4,-4.2);
	\draw [line width=1.pt] (-11.6,-4.3) circle (1.5cm);
	\draw [line width=1.pt] (-9.8,-7.0)-- (-7.9,-5.2);
	\draw [line width=1.pt] (-7.9,-5.2)-- (-7.3,-3.9);
	\draw [line width=1.pt] (-8.0,-3.8)-- (-7.9,-5.2);
	\draw [line width=1.pt] (-7.9,-5.2)-- (-8.8,-4.2);
	\draw [line width=1.pt] (-8.0,-4.4) circle (1.5cm);
	\draw [line width=1.pt] (-11.5,-5.2)-- (-9.8,-7.0);
	\draw (-13.3,1.3) node[anchor=north west] {$T$};
	\draw [->,line width=1.pt] (-13.1,-0.1) -- (-13.1,-2.0);
	\draw [->,line width=1.pt] (-5.7,-4.6) -- (-3.4,-3.4);
	\begin{scriptsize}
	\draw [fill=red] (-5.2,0.3) circle (2.5pt);
	\draw[color=blue] (-4.4,0.2) node {\small $1-s$};
	\draw [fill=red] (-3.5,-1.5) circle (2.5pt);
	\draw[color=blue] (-3.5,-2.1) node {$v_1 :1$};
	\draw [fill=red] (-1.6,0.2) circle (2.5pt);
	\draw[color=blue] (-0.8,0.3) node {\small $1-s$};
	\draw [fill=red] (4.8,0.5) circle (2.5pt);
	\draw[color=blue] (5.6,0.5) node {\small $1-s$};
	\draw [fill=red] (6.6,-1.2) circle (2.5pt);
	\draw[color=blue] (7.7,-1.8) node {$v_{s+1} :1$};
	\draw [fill=red] (8.5,0.5) circle (2.5pt);
	\draw[color=blue] (9.2,0.4) node {\small $1-s$};
	\draw [fill=red] (2.0,-4.1) circle (2.5pt);
	\draw[color=blue] (1.9,-4.6) node {$1+s$};
	\draw [fill=red] (-10.0,-0.0) circle (2.5pt);
	\draw[color=blue] (-8.9,-0.1) node {\small $u:1-s$};
	\draw [fill=red] (-10.0,-2.1) circle (2.5pt);
	\draw[color=blue] (-9.1,-2.2) node {$v:1$};
	\draw [fill=red] (-11.5,-5.2) circle (2.5pt);
	\draw[color=blue] (-11.1,-4.9) node {$u$};
	\draw [fill=red] (-9.8,-7.0) circle (2.5pt);
	\draw[color=blue] (-9.8,-7.4) node {$v$};
	\draw [fill=red] (-7.9,-5.2) circle (2.5pt);
	\draw[color=blue] (-7.3,-5.0) node {$u$};
	\end{scriptsize}
	\end{tikzpicture}
	\caption{An $s$-L-minimal tree and an $s$-eigenvector.}
\end{figure}
\end{proof}

If ${\rm Norm}(\mu)>1$, then $\mathcal{LT}_{min,\mu}^{1}=\emptyset$, since $|det(L_{\mathfrak{T}_{c}})|=1$, for every $\mathfrak{T}_{c}\in \mathcal{LT}_{min,\mu}^{1}$.

\begin{conj}
	Let $\mu$ be a Laplacian eigenvalue of some tree $T$ and $\mu\neq 1$. Then
	\begin{itemize}
		\item If ${\rm Norm}(\mu)=1$, then $|\mathcal{LT}_{min,\mu}^{0}|=|\mathcal{LT}_{min,\mu}^{1}|=+\infty$.
		\item If ${\rm Norm}(\mu)>1$, then $|\mathcal{LT}_{min,\mu}^{0}|=+\infty$.	
	\end{itemize}
	
\end{conj}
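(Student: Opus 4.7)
My plan is to mimic the constructive approach of Proposition~\ref{infmin} and of the preceding lemma (which handles integer $s > 1$): from a given $\mu$-L-minimal (cut-)tree $\mathfrak{T}_c$ with eigenvector $\mathbf{x}$, build a strictly larger element of the same family and then iterate.

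\emph{Securing a starting tree.} For $\text{Norm}(\mu) > 1$, Theorem~\ref{genintegertree} provides, on every tree $T$ with $\mu \in Spec(L_T)$, a nowhere-zero $\mu$-eigenvector $\mathbf{x}$ of multiplicity~$1$. If $\mathbf{x}_u = \mathbf{x}_v$ at some edge $uv$, then $L_T - L_{T-uv} = (\mathbf{e}_u - \mathbf{e}_v)(\mathbf{e}_u - \mathbf{e}_v)^T$ annihilates $\mathbf{x}$, so $\mathbf{x}$ remains a $\mu$-eigenvector of the disconnected $L_{T-uv}$, and its restrictions to the two components of $T - uv$ are nowhere-zero $\mu$-eigenvectors of strictly smaller tree Laplacians. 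Iterating this cut-down produces, in finitely many steps, a $\mu$-L-minimal tree. For $\text{Norm}(\mu) = 1$, a starting tree is supplied by the hypothesis that $\mu$ is a Laplacian eigenvalue of some tree.

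\emph{Enlargement.} The preceding lemma handled integer $s > 1$ by fusing $2(s+1)$ copies of $T$ in pairs at their pendants to form $s+1$ bridge vertices of degree $3$, all joined to a single new central vertex; the central equation and the bridge equations close exactly when $s$ is a positive integer. The natural parameterized generalization takes $m$ bridges, each fusing $k$ copies of $\mathfrak{T}_c$ at their pendants (so every bridge has degree $k + 1$) and all incident to one central vertex. A short calculation shows the central equation forces $(k-1)(m - \mu) = 1$, i.e.\ $m = \mu + 1/(k-1)$, which is inadmissible for non-integer $\mu$. I would therefore allow the $k$ copies meeting a bridge to carry distinct scalings $\beta_1, \ldots, \beta_k$, converting the bridge- and central-vertex equations into a small linear system whose variables are the $\beta_j$ together with the central value; for generic $\mu$ this system has a strictly positive solution whose associated extended tree is $\mu$-L-minimal. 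Iterating with different $k$ and different scaling vectors produces the desired infinite family.

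\emph{Main obstacle.} The essential difficulty is the rigid obstruction that attaching a new subtree via a single edge at a vertex $p$ forces the attachment vertex's value to equal $\mathbf{x}_p$, thereby violating adjacent-distinct: because multiplicity is $1$ on $\mathfrak{T}_c$, only a scaling of $\mathbf{x}$ is available on the preserved part, and the diagonal shift in the Laplacian exactly matches this forced value. The bridge-with-identified-pendants device of the preceding lemma circumvents this by not changing the degree at $u$, but the analogous balancing for general $\mu \in \mathtt{TPAI}$ requires showing the scaled-copies linear system has a solution with all $\beta_j \neq 0$ and no two adjacent new values coinciding — essentially a non-vanishing problem for certain rational functions of $\mu$ with algebraic coefficients, which is where the proof is most delicate. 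For the $1$-cut-tree variant when $\text{Norm}(\mu) = 1$, the identity $|\det L_{\mathfrak{T}_c}| = 1$ from \cite[Problem 3.1]{Pr} (used already in the proof of Theorem~\ref{genintegertree}) is preserved under inserting or fusing bridges, so the same construction applies \emph{mutatis mutandis} to produce $|\mathcal{LT}_{min,\mu}^{1}| = +\infty$.
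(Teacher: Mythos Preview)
This statement is labeled as a \emph{conjecture} in the paper, not a theorem: the authors do not provide a proof and present it as an open problem. There is therefore no paper's proof to compare your proposal against. The paper only establishes the special case of an integer eigenvalue $s>1$ (the lemma immediately preceding the conjecture), using exactly the $(s+1)$-bridge construction you describe; the general case is left open.

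As for your proposal on its own merits: you have essentially rediscovered why the authors stopped at a conjecture. Your enlargement scheme with $m$ bridges each fusing $k$ copies leads, as you correctly compute, to $m=\mu+1/(k-1)$, which fails for non-integer $\mu$. Your fix of allowing distinct scalings $\beta_1,\ldots,\beta_k$ on the copies at each bridge is a reasonable next step, but you then defer the heart of the matter to an unproved assertion that ``for generic $\mu$ this system has a strictly positive solution'' and that the non-vanishing problem can be handled. This is not a proof; it is a restatement of the difficulty. In particular, you give no argument that the linear system is even consistent for algebraic (as opposed to transcendental) $\mu$, nor that a solution with all $\beta_j\neq 0$ and pairwise-distinct adjacent values exists. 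Your final paragraph on the $1$-cut-tree case is likewise a sketch: the claim that the determinant identity ``is preserved under inserting or fusing bridges'' is asserted without justification, and in fact the determinant of the Laplacian of a cut-tree depends sensitively on the structure added. What you have written is a plausible outline of an attack, not a proof.
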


Similar to Theorem \ref{trai}, we state the following conjecture.
\begin{conj}\label{tpai}
	Every totally positive algebraic integer is a Laplacian eigenvalue of some tree.
\end{conj}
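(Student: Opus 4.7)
The plan is to transfer Theorem \ref{trai} from the adjacency to the Laplacian setting by means of the subdivision operation. The key identity is the following: for any tree $T$, if $B$ denotes the unsigned vertex--edge incidence matrix of $T$, then the adjacency matrix of the subdivision $S(T)$ satisfies
\[
A_{S(T)} = \begin{pmatrix} 0 & B \\ B^T & 0 \end{pmatrix}, \qquad A_{S(T)}^2 = \begin{pmatrix} BB^T & 0 \\ 0 & B^TB \end{pmatrix},
\]
and $BB^T = Q_T$, the signless Laplacian of $T$. Since $T$ is bipartite, $Q_T$ and $L_T$ share the same spectrum; hence a positive real number $\mu$ is a Laplacian eigenvalue of $T$ if and only if $\sqrt{\mu}$ is an adjacency eigenvalue of the (again a tree) subdivision $S(T)$.

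Given $\mu \in \mathtt{TPAI}$ with minimal polynomial $f(x) \in \mathbb{Z}[x]$ whose roots $\mu_1, \ldots, \mu_d$ are all positive, the polynomial $g(x) := f(x^2)$ is a monic integer polynomial vanishing at $\sqrt{\mu}$ whose every root $\pm\sqrt{\mu_i}$ is real. Hence $\sqrt{\mu} \in \mathtt{TRAI}$, and by Theorem \ref{trai} there is a tree $T^*$ whose adjacency spectrum contains $\sqrt{\mu}$. If we can arrange that $T^*$ is \emph{subdivided}, i.e.\ of the form $S(T)$ for some tree $T$, then by the key identity $\mu$ is a Laplacian eigenvalue of $T$ and Conjecture \ref{tpai} follows.

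The hard part will be producing a subdivided tree with $\sqrt{\mu}$ as an adjacency eigenvalue. A tree $T^*$ has the form $S(T)$ iff $V(T^*)$ admits a bipartition $V_O \sqcup V_M$ in which every vertex of $V_M$ has degree exactly $2$, and Salez's recursive construction in the proof of Theorem \ref{trai} does not obviously respect this structural constraint. The approach I would take is to scrutinize Salez's inductive construction and, whenever it creates an edge $uv$ between two ``branch'' vertices, insert a fresh degree-$2$ midpoint $m$ between them, using the eigenvalue equation \eqref{eigeneqn} together with Proposition \ref{adj11} to check that $\sqrt{\mu}$ survives the modification. The subtlety is that subdividing a single edge of a graph changes the adjacency spectrum via an interlacing-type identity, so a naive insertion will in general shift the target eigenvalue; the insertions must be made in a balanced way, compensated by simultaneous modifications elsewhere in the construction, in order to keep $\sqrt{\mu}$ fixed.

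Should this refinement of Salez prove too delicate, a fallback plan is to reprove the Laplacian analog of Theorem \ref{trai} from scratch, replacing the adjacency framework of $\mathfrak{Z}_\theta$, $\mathfrak{Min}_\theta$, $\mathfrak{Link}_\theta$ by a Laplacian counterpart built on the $\mu$-L-minimal cut-trees of Section 4, and proving a Laplacian analog of Theorem \ref{figen}. The chief difficulty of this fallback is that the diagonal entries of $L_T$ are rigidly prescribed by the combinatorial degrees, whereas the $\mathfrak{Link}_\theta$ pieces in the adjacency setting were arbitrary $1 \times 1$ blocks; the half-edge formalism introduced in Section 4 is precisely the device for restoring enough flexibility, and one would expect the inductive construction to go through once a Laplacian analog of the ``connection between two matrices'' operation is correctly set up.
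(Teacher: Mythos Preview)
The statement you are attempting to prove is labelled as a \emph{conjecture} in the paper and is \emph{not} proved there. What the paper does is precisely your first reduction: using Lemma~\ref{subdsign} together with the equality of the Laplacian and signless Laplacian spectra for bipartite graphs, it restates Conjecture~\ref{tpai} as the assertion that every totally real algebraic integer is an adjacency eigenvalue of the subdivision of some tree. The paper then stops, leaving this equivalent reformulation as an open problem.

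Your proposal goes further than the paper by sketching how one might actually close the gap, and you correctly identify the obstruction: Salez's construction behind Theorem~\ref{trai} produces \emph{some} tree with adjacency eigenvalue $\sqrt{\mu}$, but not one guaranteed to be of the form $S(T)$. Your suggested remedy---inserting degree-$2$ midpoints into Salez's recursive construction while compensating elsewhere---is a reasonable heuristic, but as you yourself note, subdividing a single edge perturbs the spectrum nontrivially, and you give no concrete mechanism to cancel these perturbations systematically. The fallback plan (a direct Laplacian analogue of Salez via the cut-tree formalism) is likewise only an outline. In short, your reduction coincides exactly with what the paper offers, but the completion you propose is genuinely new territory that neither you nor the paper carries through; the conjecture remains open.
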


For bipartite graphs (and therefore for trees), we know the Laplacian spectrum and the signless Laplacian spectrum are equal (see \cite[Proposition 1.3.10]{BH}). 

The subdivision of an edge $e=uv$ obtained by deleting the edge $e$ and adding a new vertex $w$ and two new edges $wu$ and $wv$. The subdivision of a graph $G$ is obtained  by the subdivision of all edges of $G$ and we denote by $S(G)$.
We have the following relation between the  signless Laplacian eigenvalues and the adjacency eigenvalues of graphs.
\begin{lem}\cite{CRS}\label{subdsign}
	For any graph $G$ with $m$ edges and $n$ vertices, we have
\[	
\lambda^{m-n}\phi_{Q_{G}}(\lambda^{2})=\phi_{S(G)}(\lambda),
\]
where $\phi_{Q_{G}}$ is the characteristic polynomial of the signless Laplacian matrix of $G$ and $\phi_{S(G)}$ is the characteristic polynomial of the adjacency matrix of $G$.
\end{lem}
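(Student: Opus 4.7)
The plan is to express both sides in terms of the vertex--edge incidence matrix of $G$ and then reduce to a block determinant identity. Let $B$ be the $n\times m$ unsigned vertex--edge incidence matrix of $G$, so $B_{v,e}=1$ iff $v$ is an endpoint of $e$. A direct computation gives $BB^{T}=D(G)+A_{G}=Q_{G}$, which is the only property of $B$ we will need.

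Next I would observe that $S(G)$ is bipartite with parts $V(G)$ (of size $n$) and the set of subdivision vertices (of size $m$, one per edge of $G$), and each subdivision vertex is adjacent in $S(G)$ precisely to the two endpoints in $G$ of its corresponding edge. Ordering the vertices of $S(G)$ so that $V(G)$ comes first, this yields the block form
\begin{equation*}
A_{S(G)}=\begin{pmatrix} 0_{n\times n} & B \\ B^{T} & 0_{m\times m}\end{pmatrix}.
\end{equation*}

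Now I would compute $\phi_{S(G)}(\lambda)=\det(\lambda I_{n+m}-A_{S(G)})$ using the Schur complement with respect to the lower-right block $\lambda I_{m}$. For $\lambda\neq 0$ this block is invertible, and
\begin{equation*}
\det\begin{pmatrix} \lambda I_{n} & -B \\ -B^{T} & \lambda I_{m}\end{pmatrix}
=\det(\lambda I_{m})\,\det\!\left(\lambda I_{n}-\tfrac{1}{\lambda}BB^{T}\right)
=\lambda^{m-n}\det(\lambda^{2}I_{n}-Q_{G})=\lambda^{m-n}\phi_{Q_{G}}(\lambda^{2}).
\end{equation*}
This gives the claimed identity whenever $\lambda\neq 0$, and since both sides are polynomials in $\lambda$, the identity extends to all $\lambda$ by continuity (equivalently, two polynomials agreeing on a cofinite set are equal).

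The main obstacle is merely bookkeeping: making sure the block identification of $A_{S(G)}$ with the incidence matrix $B$ is correct (in particular that $S(G)$ is bipartite with this exact adjacency structure) and correctly handling the $\lambda=0$ case, which is resolved by a polynomial identity argument rather than by direct substitution into the Schur complement formula. No new theory beyond standard block determinant manipulation is required.
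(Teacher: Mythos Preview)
Your argument is correct and is precisely the standard incidence-matrix/Schur-complement derivation of this identity. The paper itself does not prove this lemma but simply quotes it from \cite{CRS}, so there is no in-paper proof to compare against.
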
	 
Therefore, by Lemma \ref{subdsign}, Conjecture \ref{tpai} is equivalent to the following statement:

\begin{center}
\textbf{Every totally real algebraic integer is an eigenvalue of the subdivision of some finite tree.}
\end{center}

\subsection{Laplacian Tree Structure and Eigenvectors}

\begin{definition}
	Let $\mu\in \mathtt{TPAI}$. A cut-tree $\mathfrak{T}_{c}$ is a $\mu$-L cut-tree, if $L_{\mathfrak{T}_{c}}$ has a nowhere-zero $\mu$-eigenvector.
\end{definition}
We denote by $\mathcal{LT}_{\mu}^{k}$  the set of all $\mu$-L $k$-cut-trees and $\mathcal{LT}_{\mu}:=\displaystyle\bigcup_{k=0}^{+\infty}\mathcal{LT}_{\mu}^{k}$.
\begin{remark}\label{mincut}\textbf{Laplacian Tree Structure.}
	For every $\mu$-L cut-tree $\mathfrak{T}_{c}$ and  $\mu$-eigenvector $\boldsymbol{x}$, if we remove the edges $\{uv\in E(\mathfrak{T}_{c})\,:\,\boldsymbol{x}_{u}=\boldsymbol{x}_{v}\}$, we have some $\mu$-L-minimal cut-trees \cite[see Lemma 8]{BK}. Also, if we join by an edge a vertex of a $\mu$-L cut-tree and a vertex of another $\mu$-L cut-tree, we will obtain a new $\mu$-L cut-tree.

Suppose that $T$ is a tree, $\mu\in \mathbb{R}$, and $m_{L_{T}}(\mu)=k$. By Theorem \ref{figen}, if we remove $\partial \mathcal{N}_{\mu}(L_{T})$ from $L_{T}$ we have Laplacian matrices of some $\mu$-L cut-trees such as  $L_{\mathfrak{T}_{c}}\in \mathfrak{Min}_{\mu}$ and some cut-trees such as $\mathfrak{T'}_{c}$ where $L_{\mathfrak{T'}_{c}}\in \mathfrak{Z}_{\mu}$

\end{remark}

\begin{example}
Suppose that $k\in\mathbb{N}$, $\lambda\in\mathtt{TRAI}$, and $\mu\in\mathtt{TPAI}$. We characterize all trees $T$ such that they have the minimum vertices and
\begin{itemize}
	\item $m_{T}(\lambda)=k$;
	\item $m_{L_{T}}(\mu)=k$.
\end{itemize} 
If $k=1$, then every $\lambda$-minimal tree and every $\mu$-L-minimal tree with minimum vertices is a solution. 
If $k>1$, by Theorem \ref{figen}, a solution is a tree $T$ with minimum vertices has only one linking vertex and $k+1$ $\lambda$-minimal trees  (and  $k+1$ $\mu$-L-minimal cut-trees) with minimum vertices that connected to the linking vertex. 

\begin{figure}[H]
\centering
\begin{tikzpicture}[scale=.8,line cap=round,line join=round,>=triangle 45,x=1.0cm,y=1.0cm]

\draw [line width=1.pt,color=red] (-2.38,0.2)-- (-2.72,-0.7);
\draw [line width=1.pt,color=red] (-6.38,0.2)-- (-6.72,-0.7);
\draw [line width=1.pt,color=black] (-6.38,0.2)-- (-6.22,0.7);

\draw [line width=1.pt] (1.4,1.5)-- (0.62,0.2);
\draw [line width=1.pt,color=red] (0.62,0.2)-- (0.18,-0.7);
\draw [line width=1.pt] (1.4,1.5)-- (1.26,0.2);
\draw [line width=1.pt,color=red] (1.26,0.2)-- (1.16,-0.7);
\draw [line width=1.pt] (1.4,1.5)-- (2.7,0.2);
\draw [line width=1.pt,color=red] (2.7,0.2)-- (3.56,-0.7);
\draw (1.5,0.28) node[anchor=north west] {$\cdots$};
\begin{scriptsize}

\draw [fill=red] (-2.38,0.2) circle (2.5pt);
\draw [fill=red] (-2.72,-0.7) circle (2.5pt);
\draw[] (-2.72,-0.99) node {$1$-minimal tree};

\draw [fill=red] (-6.38,0.2) circle (2.5pt);
\draw [fill=red] (-6.72,-0.7) circle (2.5pt);
\draw[] (-6.72,-0.99) node {$\frac{3+\sqrt{5}}{2}$-L-minimal cut-tree};
\draw[] (-6.22,0.7) node {\Large\LeftScissors};

\draw [fill=red] (1.4,1.5) circle (2.5pt);
\draw [fill=red] (0.62,0.2) circle (2.5pt);
\draw [fill=red] (0.18,-0.7) circle (2.5pt);
\draw[color=red] (0.2,-1.2) node {$1$};
\draw [fill=red] (1.26,0.2) circle (2.5pt);
\draw [fill=red] (1.16,-0.7) circle (2.5pt);
\draw[color=red] (1.18,-1.2) node {$2$};
\draw [fill=red] (2.7,0.2) circle (2.5pt);
\draw [fill=red] (3.56,-0.7) circle (2.5pt);
\draw[color=red] (3.78,-1.2) node {$k+1$};
\end{scriptsize}
\end{tikzpicture}
\caption{The tree $T$ with minimum order and multiplicity $k$ ($k>1$) of $\lambda=1$ and $\mu=\frac{3+\sqrt{5}}{2}$.}
\end{figure}
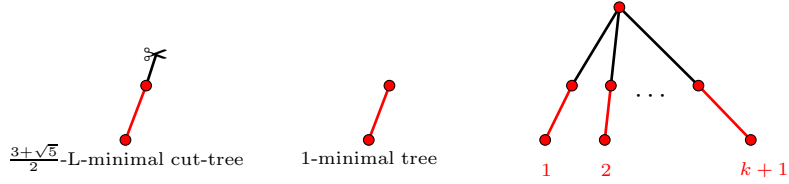

\end{example}

\begin{prob}
Suppose that $\lambda\in\mathtt{TRAI}$ and $\mu$ is a Laplacian eigenvalue of a tree. 
\begin{itemize}
	\item What is the size of  $\{T:\, T\in \mathcal{T}_{min,\lambda}\text{ with minimum order}\}?$
	\item If ${\rm Norm}(\mu)>1$, what is the size of $\{T:\, T\in\mathcal{LT}_{min,\mu}^{0}\text{ with minimum order}\}?$
\end{itemize}

\end{prob}
%theorem-------------------------------------------------------------------------------------
\vspace{.4cm}

\subsection{Totally Minimal Trees} 

	We say a tree $T$ is \textit{totally minimal} if for every $\lambda\in Spec(T)$, $T\in \mathcal{T}_{min,\lambda}$. 
By Theorem \ref{figen}, it is easy to see that $T$ is totally minimal if and only if for every $v\in V(T)$, $Spec(T)\cap Spec(T-v)=\emptyset$.

Suppose that $G$ is a graph of order $n$, $V(G)=\{u_1,\ldots,u_n\}$, and $H$ is a rooted graph with the root $v_1$.  The \textit{rooted product} $G\circ H$ is the graph that obtained from $G$ and $n$ copy $H_1,\ldots,H_n$ of $H$ by  identifying the root $v_1$ of $H_i$ with the vertex $u_i$ of $G$:
\[
V(G\circ H)=V(G)\times V(H),\, (u_i,v_j)\sim (u_k,v_l) \leftrightarrow (l=j=1\text{ and } u_i\sim u_k)\text{ or }(i=k\text{ and } v_j\sim l_l).
\]
\begin{figure}[H]
	\centering
	%\begin{tikzpicture}[scale=.9,line cap=round,line join=round,>=triangle 45,x=1.0cm,y=1.0cm]
	%\draw [line width=1.1pt] (1.22,2.74)-- (0.22,2.1);
	%\draw [line width=1.1pt] (1.22,2.74)-- (0.82,1.68);
	%\draw [line width=1.1pt,color=red] (0.82,1.68)-- (0.48,0.78);
	%\draw [line width=1.1pt] (1.22,2.74)-- (2.4,1.96);
	%\draw [line width=1.1pt,color=red] (2.4,1.96)-- (3.22,1.38);
	%\draw [line width=1.1pt,dotted,color=red] (3.22,1.38)-- (3.74,1.);
	%\draw [line width=1.1pt,color=red] (3.74,1.)-- (4.46,0.52);
	%\draw [color=red](-0.58,2.34) node[anchor=north west] {\Large $P_{p_{1}-1}$};
	%\draw [color=red](0.64,1.52) node[anchor=north west] {\Large $P_{p_{2}-1}$};
	%\draw [color=red](3.48,2.) node[anchor=north west] {\Large $P_{p_{k}-1}$};
	%\draw [color=red](1.12,2.16) node[anchor=north west] {\Large $\cdots$};
	%\begin{scriptsize}
	%\draw [fill=red] (1.22,2.74) circle (2.5pt);
	%\draw [fill=red] (0.22,2.1) circle (2.5pt);
	%\draw [fill=red] (0.82,1.68) circle (2.5pt);
	%\draw [fill=red] (0.48,0.78) circle (2.5pt);
	%\draw [fill=red] (2.4,1.96) circle (2.5pt);
	%\draw [fill=red] (3.22,1.38) circle (2.5pt);
	%\draw [fill=red] (3.74,1.) circle (2.5pt);
	%\draw [fill=red] (4.46,0.52) circle (2.5pt);
	%\end{scriptsize}
	%\end{tikzpicture}\\
	%\vspace{.5cm}
	
	\begin{tikzpicture}[scale=.7,line cap=round,line join=round,>=triangle 45,x=1.0cm,y=1.0cm]
	
	\draw [line width=1.1pt] (1.22,2.74)-- (0.22,2.1);
	\draw [line width=1.1pt] (1.22,2.74)-- (0.82,1.68);
	\draw [line width=1.1pt,color=red] (0.82,1.68)-- (0.48,0.78);
	%\draw [line width=1.1pt] (1.22,2.74)-- (2.4,1.96);
	%\draw [line width=1.1pt,color=red] (2.4,1.96)-- (3.22,1.38);
	%\draw [line width=1.1pt,dotted,color=red] (3.22,1.38)-- (3.74,1.);
	%\draw [line width=1.1pt,color=red] (3.74,1.)-- (4.46,0.52);
	%\draw [color=red](-0.78,2.45) node[anchor=north west] {$P_{p_{1}-1}$};
	%\draw [color=red](0.64,1.52) node[anchor=north west] {$P_{p_{2}-1}$};
	%\draw [color=red](3.08,2.) node[anchor=north west] {$P_{p_{k}-1}$};
	%\draw [color=red](1.02,2.16) node[anchor=north west] {\Large$\cdots$};
	\draw [line width=1.1pt] (5.74,2.74)-- (4.9,2.14);
	\draw [line width=1.1pt] (5.74,2.74)-- (5.34,1.67);
	\draw [line width=1.1pt,color=red] (5.34,1.67)-- (5.,0.77);
	%\draw [line width=1.1pt] (5.74,2.73)-- (6.92,1.95);
	%\draw [line width=1.1pt,color=red] (6.92,1.95)-- (7.74,1.37);
	%\draw [line width=1.1pt,dotted,color=red] (7.74,1.37)-- (8.26,0.99);
	%\draw [line width=1.1pt,color=red] (8.26,0.99)-- (8.98,0.51);
	%\draw [color=red](3.86,2.49) node[anchor=north west] {$P_{p_{1}-1}$};
	%\draw [color=red](5.16,1.5) node[anchor=north west] {$P_{p_{2}-1}$};
	%\draw [color=red](7.6,1.98) node[anchor=north west] {$P_{p_{k}-1}$};
	%\draw [color=red](5.54,2.14) node[anchor=north west] {\Large$\cdots$};
	\draw [line width=1.1pt] (1.22,2.74)-- (5.74,2.74);
	\draw [line width=1.1pt] (10.42,2.74)-- (9.58,2.14);
	\draw [line width=1.1pt] (10.42,2.74)-- (10.02,1.67);
	\draw [line width=1.1pt,color=red] (10.02,1.67)-- (9.68,0.77);
	
	\draw [line width=1.1pt] (-0.78,2.74)-- (-1.78,2.1);
    \draw [line width=1.1pt] (-0.78,2.74)-- (-1.18,1.68);
    \draw [line width=1.1pt,color=red] (-1.18,1.68)-- (-1.52,0.78);
    \draw [fill=red] (-0.78,2.74) circle (2.5pt);
    \draw[color=blue] (-0.61,3.12) node {\Large$v$};
    \draw [fill=red] (-1.78,2.1) circle (2.5pt);
    \draw [fill=red] (-1.18,1.68) circle (2.5pt);
    \draw [fill=red] (-1.52,0.78) circle (2.5pt);
	%\draw [line width=1.1pt] (10.42,2.73)-- (11.6,1.95);
	%\draw [line width=1.1pt,color=red] (11.6,1.95)-- (12.42,1.37);
	%\draw [line width=1.1pt,dotted,color=red] (12.42,1.37)-- (12.94,0.99);
	%\draw [line width=1.1pt,color=red] (12.94,0.99)-- (13.66,0.51);
	%\draw [color=red](9.84,1.5) node[anchor=north west] {$P_{p_{2}-1}$};
	%%\draw [color=red](12.48,1.98) node[anchor=north west] {$P_{p_{k}-1}$};
	%%\draw [color=red](10.22,2.14) node[anchor=north west] {\Large$\cdots$};
	%\draw [color=red](8.52,2.49) node[anchor=north west] {$P_{p_{1}-1}$};
	\draw [line width=1.1pt,dash pattern=on 2pt off 2pt] (5.74,2.74)-- (10.42,2.74);
	\begin{scriptsize}
	\draw [fill=red] (1.22,2.74) circle (2.5pt);
	\draw[color=blue] (1.29,3.12) node {\Large$v_1$};
	\draw [fill=red] (0.22,2.1) circle (2.5pt);
	\draw [fill=red] (0.82,1.68) circle (2.5pt);
	\draw [fill=red] (0.48,0.78) circle (2.5pt);

	%\draw [fill=red] (2.4,1.96) circle (2.5pt);
	%\draw [fill=red] (3.22,1.38) circle (2.5pt);
	%\draw [fill=red] (3.74,1.) circle (2.5pt);
	%\draw [fill=red] (4.46,0.52) circle (2.5pt);
	\draw [fill=red] (5.74,2.74) circle (2.5pt);
	\draw[color=blue] (6.05,3.14) node {\Large$v_2$};
	\draw [fill=red] (4.9,2.14) circle (2.5pt);
	\draw [fill=red] (5.34,1.67) circle (2.5pt);
	\draw [fill=red] (5.,0.77) circle (2.5pt);
	%\draw [fill=red] (6.92,1.95) circle (2.5pt);
	%\draw [fill=red] (7.74,1.37) circle (2.5pt);
	%\draw [fill=red] (8.26,0.99) circle (2.5pt);
	%\draw [fill=red] (8.98,0.51) circle (2.5pt);
	\draw [fill=red] (10.42,2.73) circle (2.5pt);
	\draw[color=blue] (10.55,3.08) node {\Large$v_{p-1}$};
	\draw [fill=red] (9.58,2.14) circle (2.5pt);
	\draw [fill=red] (10.02,1.67) circle (2.5pt);
	\draw [fill=red] (9.68,0.77) circle (2.5pt);
	%\draw [fill=red] (11.6,1.95) circle (2.5pt);
	%\draw [fill=red] (12.42,1.37) circle (2.5pt);
	%\draw [fill=red] (12.94,0.99) circle (2.5pt);
	%\draw [fill=red] (13.66,0.51) circle (2.5pt);
	\end{scriptsize}
	\end{tikzpicture}
	
	\caption{The trees $P_4$ and $P_{p-1}\circ P_4$.}
\end{figure}
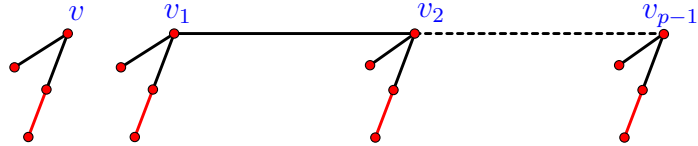
 As a corollary of \cite[Theorem 2.1]{GMc} we have the following theorem.

\begin{thm}{\cite[see Theorem 2.1]{GMc}}\label{gm}
Suppose that $G$ is a graph. If $H$ is a rooted graph, then 
\begin{equation}\label{gme}
\phi_{G\circ H}(\lambda)=|\phi_{H}(\lambda)I-\phi_{H-v}(\lambda)A_{G}|.
\end{equation}
\end{thm}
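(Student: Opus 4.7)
The plan is to exhibit $A_{G\circ H}$ as a structured Kronecker sum and then reduce an $np \times np$ determinant to an $n \times n$ determinant via a Weinstein--Aronszajn ("Sylvester") identity. Let $n = |V(G)|$ and $p = |V(H)|$, and order the vertices of $G\circ H$ so that the copy $H_i$ of $H$ glued at $u_i$ occupies a contiguous block of $p$ indices with its root (identified with $u_i$) placed first in that block. In this ordering, edges inside each $H_i$ give $I_n \otimes A_H$ and the edges of $G$ between the roots of different copies give $A_G \otimes e_1 e_1^T$ (with $e_1 \in \mathbb{R}^p$), so
\[
A_{G\circ H} = I_n \otimes A_H + A_G \otimes e_1 e_1^T.
\]

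Set $M := \lambda I_p - A_H$, so that $\phi_H(\lambda) = \det M$. Assuming first that $\phi_H(\lambda) \neq 0$, I would factor
\[
\lambda I - A_{G\circ H} = (I_n \otimes M)\bigl(I_{np} - A_G \otimes (M^{-1} e_1 e_1^T)\bigr)
\]
and obtain $\det(\lambda I - A_{G\circ H}) = \phi_H(\lambda)^n \det\bigl(I_{np} - A_G \otimes (M^{-1} e_1 e_1^T)\bigr)$. Writing the rank-reduced perturbation as $UV$ with $U = I_n \otimes (M^{-1} e_1)$ of size $np \times n$ and $V = A_G \otimes e_1^T$ of size $n \times np$, and applying $\det(I - UV) = \det(I - VU)$ together with the Kronecker mixed-product rule,
\[
\det(I_{np} - UV) = \det\bigl(I_n - A_G \otimes (e_1^T M^{-1} e_1)\bigr) = \det\bigl(I_n - (M^{-1})_{11}\, A_G\bigr).
\]

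Next, the adjugate formula gives $(M^{-1})_{11} = C_{11}(M)/\det M$, and deleting the first row and column of $\lambda I_p - A_H$ produces exactly $\lambda I_{p-1} - A_{H-v}$, hence $(M^{-1})_{11} = \phi_{H-v}(\lambda)/\phi_H(\lambda)$. Combining,
\[
\phi_{G\circ H}(\lambda) = \phi_H(\lambda)^n \det\!\Big(I_n - \tfrac{\phi_{H-v}(\lambda)}{\phi_H(\lambda)} A_G\Big) = \det\bigl(\phi_H(\lambda) I_n - \phi_{H-v}(\lambda) A_G\bigr),
\]
which is \eqref{gme}. Since both sides are polynomials in $\lambda$, the identity established on the Zariski-open set $\{\phi_H(\lambda) \neq 0\}$ extends to all $\lambda$ by continuity.

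There is no real conceptual obstacle; the only care needed is the bookkeeping. The step where one must be most careful is checking that the adjacency matrix really has the Kronecker decomposition above (in particular, that cross-copy edges concentrate on the $(1,1)$-entry of each block), and tracking dimensions correctly in the Weinstein--Aronszajn swap $UV \mapsto VU$ so that the resulting factor $(M^{-1})_{11}$ is a scalar multiplying $A_G$.
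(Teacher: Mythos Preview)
Your argument is correct. The paper, however, does not prove this theorem at all: it simply quotes it as (a corollary of) \cite[Theorem~2.1]{GMc} and uses it as a black box in the discussion of totally minimal trees. So there is no ``paper's own proof'' to compare against.

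For what it is worth, the original Godsil--McKay argument proceeds differently: it derives the formula from a block expansion of $\det(\lambda I - A_{G\circ H})$ along the $p\times p$ blocks indexed by $V(G)$, using a cofactor/recursion that directly tracks the root of each copy of $H$. Your route---writing $A_{G\circ H}=I_n\otimes A_H + A_G\otimes e_1e_1^T$, factoring out $I_n\otimes(\lambda I_p-A_H)$, and then collapsing the rank-$n$ perturbation via the Weinstein--Aronszajn identity $\det(I-UV)=\det(I-VU)$---is a genuinely distinct and arguably cleaner derivation. It localises all the graph-theoretic content into the single identification $(M^{-1})_{11}=\phi_{H-v}(\lambda)/\phi_H(\lambda)$ (the adjugate formula), whereas the original argument stays closer to combinatorial bookkeeping. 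Both approaches yield the identity on the Zariski-open set where $\phi_H(\lambda)\neq 0$ and then extend by polynomiality, exactly as you note.

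The only point I would tighten: in the line ``$\det\bigl(I_n - A_G \otimes (e_1^T M^{-1} e_1)\bigr)$'', the symbol $\otimes$ is unnecessary once $e_1^T M^{-1} e_1$ has been identified as a scalar; writing $\det\bigl(I_n - (M^{-1})_{11}\,A_G\bigr)$ from the outset avoids any ambiguity about dimensions.
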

  
%For any $n_1,\ldots,n_k\in\mathbb{N}$, the starlike tree $S(n_{1},\ldots,n_{k})$ is a tree with a vertex of degree $k\geq 3$, say $v$, such that $S(n_{1},\ldots,n_{k})-v$ is the disjoint union of $P_{n_{1}},\ldots,P_{n_{k}}$.

\begin{thm}
Let $T$ be a totally minimal tree. Then the following trees are totally minimal:
\begin{enumerate}[i.]
	\item $P_{p-1}$, for any prime number $p$,
	\item $T\circ P_2$,
	\item $T\circ P_4$, where the root of $P_4$ is of degree $2$.
\end{enumerate}

\end{thm}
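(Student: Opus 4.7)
The plan is to handle (i) with the explicit trigonometric eigenvectors of a path, and (ii), (iii) via the rooted product identity of Theorem \ref{gm}.

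For (i), the eigenvalues of $P_{p-1}$ are $\lambda_k = 2\cos(k\pi/p)$ for $k=1,\ldots,p-1$, with the $k$-th eigenvector having $j$-th coordinate $\sin(jk\pi/p)$. Since $p$ is prime and $1\le j,k \le p-1$, we have $p\nmid jk$, hence $\sin(jk\pi/p)\ne 0$. So each eigenvector is nowhere-zero, and $P_{p-1}\in\mathcal{T}_{min,\lambda_k}$ for every $k$, proving $P_{p-1}$ is totally minimal.

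For (ii) and (iii), write $H$ for the rooted tree ($P_2$ in (ii), $P_4$ rooted at a degree-$2$ vertex $v$ in (iii)). By Theorem \ref{gm},
\[
\phi_{T\circ H}(\lambda)=\det\bigl(\phi_H(\lambda)I-\phi_{H-v}(\lambda)A_T\bigr)=\prod_{\mu\in Spec(T)}\bigl(\phi_H(\lambda)-\mu\,\phi_{H-v}(\lambda)\bigr),
\]
so every eigenvalue $\lambda$ of $T\circ H$ corresponds to some $\mu\in Spec(T)$ with $\phi_H(\lambda)=\mu\,\phi_{H-v}(\lambda)$. Since $T$ is totally minimal, pick a nowhere-zero $\mu$-eigenvector $\boldsymbol{x}$ of $T$. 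I will produce a $\lambda$-eigenvector $\boldsymbol{\xi}$ of $T\circ H$ of tensor form $\boldsymbol{\xi}(u,w)=\boldsymbol{x}_u\, y_w$, where $\boldsymbol{y}$ is defined on $V(H)$ by the local equations $\lambda y_w = \sum_{w'\sim w} y_{w'}$ for $w\ne v$ together with the root-twisted equation $(\lambda-\mu)y_v=\sum_{w\sim v}y_w$. A direct check using (\ref{eigeneqn}) confirms that this $\boldsymbol{\xi}$ is a $\lambda$-eigenvector of $T\circ H$ precisely when the compatibility $\phi_H(\lambda)=\mu\,\phi_{H-v}(\lambda)$ is satisfied.

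The main task is to verify that $\boldsymbol{y}$ is nowhere-zero. For $H=P_2$ with pendant $w$, normalizing $y_v=1$ gives $y_w=1/\lambda$, and $\lambda\ne 0$ since $\lambda^2-\mu\lambda-1$ does not vanish at $0$. For $H=P_4=v_1v_2v_3v_4$ with root $v=v_2$, normalizing $y_{v_2}=1$ and solving the three-term recursion yields $y_{v_1}=1/\lambda$, $y_{v_4}=1/(\lambda^2-1)$, $y_{v_3}=\lambda/(\lambda^2-1)$, and the compatibility becomes
\[
p_\mu(\lambda):=\lambda^4-\mu\lambda^3-3\lambda^2+\mu\lambda+1=0,
\]
which evaluates to $1,-1,-1$ at $\lambda=0,1,-1$ respectively; hence none of $0,\pm 1$ is an eigenvalue of $T\circ P_4$. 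So $\boldsymbol{y}$ is nowhere-zero in both cases, and $\boldsymbol{\xi}$ is nowhere-zero on $T\circ H$.

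To finish, I should observe that every eigenvalue of $T\circ H$ is simple, so that $\boldsymbol{\xi}$ spans the whole eigenspace. For (ii) the quadratic $\lambda^2-\mu\lambda-1$ has discriminant $\mu^2+4>0$, giving two distinct real roots; for (iii) the substitution $y=\lambda-1/\lambda$ transforms $p_\mu(\lambda)=\lambda^2(y^2-\mu y-1)$, and the two distinct real values of $y$ each give two distinct real $\lambda$. Distinct $\mu$'s yield disjoint root sets because $\phi_H$ and $\phi_{H-v}$ share no common root in either case (as checked at $\lambda=0,\pm 1$). Thus $T\circ H\in\mathcal{T}_{min,\lambda}$ for every $\lambda\in Spec(T\circ H)$. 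The main obstacle—excluding the exceptional values $\lambda\in\{0,\pm 1\}$ at which $\boldsymbol{y}$ could vanish—is handled by the above evaluations of $p_\mu$.
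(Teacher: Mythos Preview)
Your proof is correct, but it follows a genuinely different route from the paper's. For part (i) the two arguments are identical. For (ii) and (iii), you build an explicit tensor eigenvector $\boldsymbol{\xi}(u,w)=\boldsymbol{x}_u\,y_w$ and then prove that every eigenvalue of $T\circ H$ is simple, so that your nowhere-zero $\boldsymbol{\xi}$ spans the eigenspace. The paper instead uses the characterisation ``$T$ is totally minimal iff $Spec(T)\cap Spec(T-v)=\emptyset$ for all $v$'': it shows directly that $\phi_{T\circ S-u}(\theta)\ne 0$ for every eigenvalue $\theta$ and every vertex $u=(u_i,v_j)$. For $j=1$ this follows from the rooted-product identity applied to $T-u_i$ together with the total minimality of $T$; for $j>1$ the paper observes that, in the specific cases $H=P_2$ or $H=P_4$ rooted at a degree-$2$ vertex, a zero at any non-root vertex of a copy of $H$ forces a zero at the root via \eqref{eigeneqn}, reducing to the $j=1$ case. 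Both arguments ultimately rest on the coprimality of $\phi_H$ and $\phi_{H-v}$ (equivalently, your exclusion of $\lambda\in\{0\}$ for $P_2$ and $\lambda\in\{0,\pm 1\}$ for $P_4$). Your approach is more constructive and makes the eigenvectors explicit, at the cost of the extra simplicity argument; the paper's approach avoids simplicity entirely but relies on the somewhat ad hoc zero-propagation step, which is precisely what restricts the statement to $P_2$ and $P_4$.
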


\begin{proof}
i. If $V(P_{p-1})=\{v_1,\ldots,v_{p-1}\}$, then ${\bf{\xi}}_{k}=(\sin(\frac{k\pi}{p}),\ldots,\sin(\frac{k(p-1)\pi}{p}))$ is a $\lambda_{k}$-eigenvector for $\lambda_{k}=2\cos(\frac{k\pi}{p})$, $k\in[p-1]$ \cite[Section 1.4]{BH}. Hence, we have $\sin(\frac{ki\pi}{p})\neq 0$, for $i,k\in[p-1]$.\\
ii., iii.: Suppose that $T$ is of order $n$, $V(T)=\{u_1,\ldots,u_n\}$, $S$ is a totally minimal tree, $V(S)=\{v_1,\ldots,v_m\}$, and  $v_1$ is the root of $S$. By Proposition \ref{mul1}, $\phi_{S}(\lambda)$ and $\phi_{S-v_1}(\lambda)$ do not have a common root, hence from equation (\ref{gme}), $\phi_{T\circ S}(\lambda)$ and $\phi_{S-v_1}(\lambda)$ do not have a common root. By Theorem \ref{gm} 
\[
\phi_{T\circ S}(\lambda)=(\phi_{S-v_1}(\lambda))^{n}|\frac{\phi_{S}(\lambda)}{\phi_{S-v_1}(\lambda)}I-A_{T}|=(\phi_{S-v_1}(\lambda))^{n} \phi_{T}(\frac{\phi_{S}(\lambda)}{\phi_{S-v_1}(\lambda)}).
\]
Therefore, the roots of $\phi_{T\circ S}(\lambda)$, are the roots of $\frac{\phi_{S}(\lambda)}{\phi_{S-v_1}(\lambda)}=\lambda_i$, where $\lambda_i$ is a root of $\phi_{T}(\lambda)$, for every $i\in [n]$.

It is sufficient to show that for any root $\theta$ of $\phi_{T\circ S}(\lambda)$ and any vertex $u=(u_i,v_j)$ of $T\circ S$, $\phi_{T\circ S -u}(\theta)\neq 0$. If $j=1$,
$$
\phi_{T\circ S -u}(\theta)=\phi_{(T-u_i)\circ S}(\theta) \cdot \phi_{S-v_1}(\theta)=(\phi_{S-v_1}(\theta))^{n} \phi_{T-u_i}(\frac{\phi_{S}(\theta)}{\phi_{S-v_1}(\theta)})\neq 0.
$$
Hence, any $\theta$-eigenvector does not vanish at $u$.

If $j>1$ and an eigenvalue vanishes at $u$, then by (\ref{eigeneqn}), it vanishes at $(u_i,v_1)$. Hence by the previous case we get a contradiction.
\end{proof}

%Construction--------------------------------------------------------------------------------------
We end this section with the following problem concerning totally minimal trees.
%\vspace{7mm} \noindent{\bf\Large II. Some Constructions} \vspace{5mm}
%\section{Some Constructions}
\begin{prob}
Which trees are totally minimal?	
\end{prob}

%\newpage

\section*{Appendix}

%\subsection*{Some tables of some $\mathcal{T}_{min,\lambda}$}

\begin{table}[H]
$\lambda=1:$\hspace*{3cm}$\lambda=\sqrt{2}:$\\
\begin{tabular}{|c|c|}
	\hline 
$n$	&  Tree  \\ 
	\hline 
$2$	&  \begin{tikzpicture}[scale=.5,line cap=round,line join=round,>=triangle 45,x=1.0cm,y=1.0cm]
\draw [line width=1.pt] (4.12,1.0)-- (7.02,1.0);
\begin{scriptsize}
\draw [fill=red] (4.12,1.0) circle (2.5pt);
\draw[color=blue] (4.26,1.45) node {$1$};
\draw [fill=red] (7.02,1.0) circle (2.5pt);
\draw[color=blue] (7.16,1.41) node {$1$};
\end{scriptsize}
\end{tikzpicture}  \\ 
	\hline 
 $6$	&  \begin{tikzpicture}[scale=.5,line cap=round,line join=round,>=triangle 45,x=1.0cm,y=1.0cm]
 \draw [line width=1.pt] (4.12,1.1)-- (6.98,1.1);
 \draw [line width=1.pt] (4.12,1.1)-- (2.68,2.16);
 \draw [line width=1.pt] (4.12,1.1)-- (2.76,0.2);
 \draw [line width=1.pt] (6.98,1.1)-- (8.08,2.24);
 \draw [line width=1.pt] (6.98,1.1)-- (8.26,0.14);
 \begin{scriptsize}
 \draw [fill=red] (4.12,1.1) circle (2.5pt);
 \draw[color=blue] (4.26,1.5) node {$1$};
 \draw [fill=red] (6.98,1.1) circle (2.5pt);
 \draw[color=blue] (6.78,1.5) node {$-1$};
 \draw [fill=red] (2.68,2.16) circle (2.5pt);
 \draw[color=blue] (2.82,2.53) node {$1$};
 \draw [fill=red] (2.76,0.2) circle (2.5pt);
 \draw[color=blue] (2.9,0.57) node {$1$};
 \draw [fill=red] (8.08,2.24) circle (2.5pt);
 \draw[color=blue] (8.28,2.61) node {$-1$};
 \draw [fill=red] (8.26,0.14) circle (2.5pt);
 \draw[color=blue] (8.46,0.5) node {$-1$};
 \end{scriptsize}
 \end{tikzpicture}  \\ 
	\hline 
$9$	&   \begin{tikzpicture}[scale=.5,line cap=round,line join=round,>=triangle 45,x=1.0cm,y=1.0cm]
\draw [line width=1.pt] (3.52,1.2)-- (2.68,2.16);
\draw [line width=1.pt] (3.52,1.2)-- (2.76,0.2);
\draw [line width=1.pt] (7.26,1.2)-- (8.08,2.2);
\draw [line width=1.pt] (7.26,1.2)-- (8.26,0.2);
\draw [line width=1.pt] (5.38,1.2)-- (3.52,1.2);
\draw [line width=1.pt] (5.38,1.2)-- (5.36,0.2);
\draw [line width=1.pt] (5.38,1.2)-- (7.26,1.2);
\draw [line width=1.pt] (7.26,1.2)-- (8.3,1.2);
\begin{scriptsize}
\draw [fill=red] (3.52,1.2) circle (2.5pt);
\draw[color=blue] (3.66,1.57) node {$1$};
\draw [fill=red] (7.26,1.2) circle (2.5pt);
\draw[color=blue] (7.12,1.59) node {$-1$};
\draw [fill=red] (2.68,2.2) circle (2.5pt);
\draw[color=blue] (2.82,2.53) node {$1$};
\draw [fill=red] (2.76,0.2) circle (2.5pt);
\draw[color=blue] (2.64,0.69) node {$-1$};
\draw [fill=red] (8.08,2.2) circle (2.5pt);
\draw[color=blue] (8.28,2.61) node {$-1$};
\draw [fill=red] (8.26,0.2) circle (2.5pt);
\draw[color=blue] (8.46,0.51) node {$-1$};
\draw [fill=red] (5.38,1.2) circle (2.5pt);
\draw[color=blue] (5.52,1.51) node {$2$};
\draw [fill=red] (5.36,0.2) circle (2.5pt);
\draw[color=blue] (4.98,0.21) node {$2$};
\draw [fill=red] (8.3,1.2) circle (2.5pt);
\draw[color=blue] (8.5,1.65) node {$-1$};
\end{scriptsize}
\end{tikzpicture} \\ 
\hline 	
$9$	&   \begin{tikzpicture}[scale=.5,line cap=round,line join=round,>=triangle 45,x=1.0cm,y=1.0cm]
\draw [line width=1.pt] (3.52,1.2)-- (2.68,2.16);
\draw [line width=1.pt] (3.52,1.2)-- (2.76,0.2);
\draw [line width=1.pt] (7.26,1.2)-- (8.08,2.24);
\draw [line width=1.pt] (7.26,1.2)-- (8.26,0.14);
\draw [line width=1.pt] (3.52,1.2)-- (4.34,1.2);
\draw [line width=1.pt] (4.34,1.2)-- (5.34,1.2);
\draw [line width=1.pt] (5.34,1.2)-- (6.3,1.2);
\draw [line width=1.pt] (6.3,1.2)-- (7.26,1.2);
\begin{scriptsize}
\draw [fill=red] (3.52,1.2) circle (2.5pt);
\draw[color=blue] (3.72,1.57) node {$-1$};
\draw [fill=red] (7.26,1.2) circle (2.5pt);
\draw[color=blue] (7.06,1.55) node {$-1$};
\draw [fill=red] (2.68,2.16) circle (2.5pt);
\draw[color=blue] (2.88,2.53) node {$-1$};
\draw [fill=red] (2.76,0.2) circle (2.5pt);
\draw[color=blue] (2.6,0.63) node {$-1$};
\draw [fill=red] (8.08,2.24) circle (2.5pt);
\draw[color=blue] (8.28,2.61) node {$-1$};
\draw [fill=red] (8.26,0.14) circle (2.5pt);
\draw[color=blue] (8.46,0.51) node {$-1$};
\draw [fill=red] (4.34,1.2) circle (2.5pt);
\draw[color=blue] (4.48,1.55) node {$1$};
\draw [fill=red] (5.34,1.2) circle (2.5pt);
\draw[color=blue] (5.48,1.51) node {$2$};
\draw [fill=red] (6.3,1.2) circle (2.5pt);
\draw[color=blue] (6.44,1.49) node {$1$};
\end{scriptsize}
\end{tikzpicture} \\ 
	\hline 
$10$	& \begin{tikzpicture}[scale=.5,line cap=round,line join=round,>=triangle 45,x=1.0cm,y=1.0cm]
\draw [line width=1.pt] (3.52,1.2)-- (2.68,2.2);
\draw [line width=1.pt] (3.52,1.2)-- (2.76,0.2);
\draw [line width=1.pt] (7.26,1.2)-- (8.08,2.2);
\draw [line width=1.pt] (7.26,1.2)-- (8.26,0.2);
\draw [line width=1.pt] (5.4,1.2)-- (3.52,1.2);
\draw [line width=1.pt] (5.4,1.2)-- (7.26,1.2);
\draw [line width=1.pt] (5.4,1.2)-- (4.48,0.2);
\draw [line width=1.pt] (5.4,1.2)-- (6.28,0.2);
\draw [line width=1.pt] (5.4,1.2)-- (5.34,0.2);
\begin{scriptsize}
\draw [fill=red] (3.52,1.2) circle (2.5pt);
\draw[color=blue] (3.72,1.57) node {$-1$};
\draw [fill=red] (7.26,1.2) circle (2.5pt);
\draw[color=blue] (7.16,1.55) node {$-1$};
\draw [fill=red] (2.68,2.2) circle (2.5pt);
\draw[color=blue] (2.88,2.53) node {$-1$};
\draw [fill=red] (2.76,0.2) circle (2.5pt);
\draw[color=blue] (2.6,0.63) node {$-1$};
\draw [fill=red] (8.08,2.2) circle (2.5pt);
\draw[color=blue] (8.28,2.61) node {$-1$};
\draw [fill=red] (8.26,0.2) circle (2.5pt);
\draw[color=blue] (8.46,0.51) node {$-1$};
\draw [fill=red] (5.4,1.2) circle (2.5pt);
\draw[color=blue] (5.54,1.55) node {$1$};
\draw [fill=red] (4.48,0.2) circle (2.5pt);
\draw[color=blue] (4.2,0.29) node {$1$};
\draw [fill=red] (6.28,0.2) circle (2.5pt);
\draw[color=blue] (6.58,0.35) node {$1$};
\draw [fill=red] (5.34,0.2) circle (2.5pt);
\draw[color=blue] (5.64,0.2) node {$1$};
\end{scriptsize}
\end{tikzpicture}   \\ 
	\hline 
$10$	&  \begin{tikzpicture}[scale=.5,line cap=round,line join=round,>=triangle 45,x=1.0cm,y=1.0cm]
\draw [line width=1.pt] (3.52,1.2)-- (2.68,2.2);
\draw [line width=1.pt] (3.52,1.2)-- (2.76,0.2);
\draw [line width=1.pt] (7.26,1.2)-- (8.08,2.2);
\draw [line width=1.pt] (7.26,1.2)-- (8.26,0.2);
\draw [line width=1.pt] (4.86,1.2)-- (3.52,1.2);
\draw [line width=1.pt] (4.86,1.2)-- (4.78,0.2);
\draw [line width=1.pt] (4.86,1.2)-- (5.98,1.2);
\draw [line width=1.pt] (5.98,1.2)-- (7.26,1.2);
\draw [line width=1.pt] (5.98,1.2)-- (5.98,0.2);
\begin{scriptsize}
\draw [fill=red] (3.52,1.2) circle (2.5pt);
\draw[color=blue] (3.72,1.57) node {$-1$};
\draw [fill=red] (7.26,1.2) circle (2.5pt);
\draw[color=blue] (7.12,1.59) node {$-1$};
\draw [fill=red] (2.68,2.2) circle (2.5pt);
\draw[color=blue] (2.88,2.53) node {$-1$};
\draw [fill=red] (2.76,0.2) circle (2.5pt);
\draw[color=blue] (2.64,0.69) node {$-1$};
\draw [fill=red] (8.08,2.2) circle (2.5pt);
\draw[color=blue] (8.28,2.61) node {$-1$};
\draw [fill=red] (8.26,0.2) circle (2.5pt);
\draw[color=blue] (8.46,0.51) node {$-1$};
\draw [fill=red] (4.86,1.2) circle (2.5pt);
\draw[color=blue] (5.,1.55) node {$1$};
\draw [fill=red] (4.78,0.2) circle (2.5pt);
\draw[color=blue] (4.4,0.27) node {$1$};
\draw [fill=red] (5.98,1.2) circle (2.5pt);
\draw[color=blue] (6.12,1.55) node {$1$};
\draw [fill=red] (5.98,0.2) circle (2.5pt);
\draw[color=blue] (6.34,0.25) node {$1$};
\end{scriptsize}
\end{tikzpicture} \\ 
	\hline 

\end{tabular} 
\begin{tabular}{|c|c|}
		\hline 
		$n$	&  Tree  \\ 
		\hline 
		$3$	&  \begin{tikzpicture}[scale=.5,line cap=round,line join=round,>=triangle 45,x=1.0cm,y=1.0cm]
		\draw [line width=1.pt] (2.48,1.2)-- (4.42,1.2);
		\draw [line width=1.pt] (4.42,1.2)-- (6.34,1.2);
		\begin{scriptsize}
		\draw [fill=red] (2.48,1.2) circle (2.5pt);
		\draw[color=blue] (2.74,1.55) node {$1$};
		\draw [fill=red] (4.42,1.2) circle (2.5pt);
		\draw[color=blue] (5.04,1.53) node {$\sqrt{2}$};
		\draw [fill=red] (6.34,1.2) circle (2.5pt);
		\draw[color=blue] (6.6,1.53) node {$1$};
		\end{scriptsize}
		\end{tikzpicture}  \\ 
		\hline 
$9$ & \begin{tikzpicture}[scale=.5,line cap=round,line join=round,>=triangle 45,x=1.0cm,y=1.0cm]
\draw [line width=1.pt] (2.94,1.2)-- (4.42,1.2);
\draw [line width=1.pt] (4.42,1.2)-- (6.34,1.2);
\draw [line width=1.pt] (6.34,1.2)-- (7.24,2.2);
\draw [line width=1.pt] (6.34,1.2)-- (5.98,0.2);
\draw [line width=1.pt] (6.34,1.2)-- (6.04,2.2);
\draw [line width=1.pt] (4.42,1.2)-- (3.86,2.2);
\draw [line width=1.pt] (4.42,1.2)-- (3.82,0.2);
\draw [line width=1.pt] (6.34,1.2)-- (7.28,0.2);
\begin{scriptsize}
\draw [fill=red] (2.94,1.2) circle (2.5pt);
\draw[color=blue] (2.22,1.17) node {$-\sqrt{2}$};
\draw [fill=red] (4.42,1.2) circle (2.5pt);
\draw[color=blue] (4.67,1.39) node {$-2$};
\draw [fill=red] (6.34,1.2) circle (2.5pt);
\draw[color=blue] (5.8,1.55) node {$\sqrt{2}$};
\draw [fill=red] (7.24,2.2) circle (2.5pt);
\draw[color=blue] (7.34,1.61) node {$1$};
\draw [fill=red] (5.98,0.2) circle (2.5pt);
\draw[color=blue] (5.72,0.21) node {$1$};
\draw [fill=red] (7.28,0.2) circle (2.5pt);
\draw[color=blue] (7.3,0.81) node {$1$};
\draw [fill=red] (6.04,2.2) circle (2.5pt);
\draw[color=blue] (5.72,2.17) node {$1$};
\draw [fill=red] (3.86,2.2) circle (2.5pt);
\draw[color=blue] (3.18,2.25) node {$-\sqrt{2}$};
\draw [fill=red] (3.82,0.2) circle (2.5pt);
\draw[color=blue] (3.14,0.29) node {$-\sqrt{2}$};
\end{scriptsize}
\end{tikzpicture} \\
				\hline 
$9$	& \begin{tikzpicture}[scale=.5,line cap=round,line join=round,>=triangle 45,x=1.0cm,y=1.0cm]
\draw [line width=1.pt] (2.94,1.2)-- (4.42,1.2);
\draw [line width=1.pt] (4.42,1.2)-- (6.34,1.2);
\draw [line width=1.pt] (6.34,1.2)-- (7.24,2.2);
\draw [line width=1.pt] (6.34,1.2)-- (5.98,0.2);
\draw [line width=1.pt] (6.34,1.2)-- (6.04,2.2);
\draw [line width=1.pt] (4.42,1.2)-- (3.82,0.2);
\draw [line width=1.pt] (6.34,1.2)-- (7.28,0.2);
\draw [line width=1.pt] (2.94,1.2)-- (1.8,1.2);
\begin{scriptsize}
\draw [fill=red] (2.94,1.2) circle (2.5pt);
\draw[color=blue] (3.16,1.61) node {$-2\sqrt{2}$};
\draw [fill=red] (4.42,1.2) circle (2.5pt);
\draw[color=blue] (4.46,1.49) node {$-2$};
\draw [fill=red] (6.34,1.2) circle (2.5pt);
\draw[color=blue] (5.8,1.5) node {$\sqrt{2}$};
\draw [fill=red] (7.24,2.2) circle (2.5pt);
\draw[color=blue] (7.34,1.57) node {$1$};
\draw [fill=red] (5.98,0.2) circle (2.5pt);
\draw[color=blue] (5.72,0.17) node {$1$};
\draw [fill=red] (7.28,0.2) circle (2.5pt);
\draw[color=blue] (7.3,0.77) node {$1$};
\draw [fill=red] (6.04,2.2) circle (2.5pt);
\draw[color=blue] (5.72,2.13) node {$1$};
\draw [fill=red] (3.82,0.2) circle (2.5pt);
\draw[color=blue] (3.14,0.25) node {$-\sqrt{2}$};
\draw [fill=red] (1.8,1.2) circle (2.5pt);
\draw[color=blue] (1.7,1.51) node {$-2$};
\end{scriptsize}
\end{tikzpicture}\\
	\hline			
$9$	& \begin{tikzpicture}[scale=.5,line cap=round,line join=round,>=triangle 45,x=1.0cm,y=1.0cm]
\draw [line width=1.pt] (2.94,1.2)-- (4.42,1.2);
\draw [line width=1.pt] (4.42,1.2)-- (6.34,1.2);
\draw [line width=1.pt] (6.34,1.2)-- (7.24,2.2);
\draw [line width=1.pt] (6.34,1.2)-- (5.98,0.2);
\draw [line width=1.pt] (4.42,1.2)-- (3.82,0.2);
\draw [line width=1.pt] (6.34,1.2)-- (7.28,0.2);
\draw [line width=1.pt] (2.94,1.2)-- (1.8,1.2);
\draw [line width=1.pt] (4.42,1.2)-- (3.9,2.2);
\begin{scriptsize}
\draw [fill=red] (2.94,1.2) circle (2.5pt);
\draw[color=blue] (2.8,1.57) node {$-2$};
\draw [fill=red] (4.42,1.2) circle (2.5pt);
\draw[color=blue] (4.94,1.57) node {$-\sqrt{2}$};
\draw [fill=red] (6.34,1.2) circle (2.5pt);
\draw[color=blue] (6.1,1.49) node {$2$};
\draw [fill=red] (7.24,2.2) circle (2.5pt);
\draw[color=blue] (7.7,1.57) node {$\sqrt{2}$};
\draw [fill=red] (5.98,0.2) circle (2.5pt);
\draw[color=blue] (5.66,0.57) node {$\sqrt{2}$};
\draw [fill=red] (7.28,0.2) circle (2.5pt);
\draw[color=blue] (7.66,0.77) node {$\sqrt{2}$};
\draw [fill=red] (3.82,0.2) circle (2.5pt);
\draw[color=blue] (3.34,0.33) node {$-1$};
\draw [fill=red] (1.8,1.2) circle (2.5pt);
\draw[color=blue] (1.7,1.56) node {$-\sqrt{2}$};
\draw [fill=red] (3.9,2.2) circle (2.5pt);
\draw[color=blue] (4.3,2.15) node {$-1$};
\end{scriptsize}
\end{tikzpicture}\\
\hline	
$9$	& \begin{tikzpicture}[scale=.5,line cap=round,line join=round,>=triangle 45,x=1.0cm,y=1.0cm]
\draw [line width=1.pt] (2.94,1.2)-- (4.42,1.2);
\draw [line width=1.pt] (4.42,1.2)-- (6.24,1.2);
\draw [line width=1.pt] (6.24,1.2)-- (7.24,2.2);
\draw [line width=1.pt] (6.24,1.2)-- (5.98,0.2);
\draw [line width=1.pt] (4.42,1.2)-- (3.82,0.2);
\draw [line width=1.pt] (6.24,1.2)-- (7.28,0.2);
\draw [line width=1.pt] (2.94,1.2)-- (1.8,1.2);
\draw [line width=1.pt] (3.82,0.2)-- (2.8,0.2);
\begin{scriptsize}
\draw [fill=red] (2.94,1.2) circle (2.5pt);
\draw[color=blue] (3.16,1.57) node {$-\sqrt{2}$};
\draw [fill=red] (4.42,1.2) circle (2.5pt);
\draw[color=blue] (4.58,1.57) node {$-1$};
\draw [fill=red] (6.24,1.2) circle (2.5pt);
\draw[color=blue] (6.0,1.63) node {$\sqrt{2}$};
\draw [fill=red] (7.24,2.2) circle (2.5pt);
\draw[color=blue] (7.34,1.57) node {$1$};
\draw [fill=red] (5.98,0.2) circle (2.5pt);
\draw[color=blue] (5.66,0.41) node {$1$};
\draw [fill=red] (7.28,0.2) circle (2.5pt);
\draw[color=blue] (7.3,0.77) node {$1$};
\draw [fill=red] (3.82,0.2) circle (2.5pt);
\draw[color=blue] (3.18,0.55) node {$-\sqrt{2}$};
\draw [fill=red] (1.8,1.2) circle (2.5pt);
\draw[color=blue] (1.7,1.51) node {$-1$};
\draw [fill=red] (2.8,0.2) circle (2.5pt);
\draw[color=blue] (2.28,0.27) node {$-1$};
\end{scriptsize}
\end{tikzpicture}\\
\hline	
$9$	& \begin{tikzpicture}[scale=.5,line cap=round,line join=round,>=triangle 45,x=1.0cm,y=1.0cm]
\draw [line width=1.pt] (2.94,1.2)-- (4.42,1.2);
\draw [line width=1.pt] (4.42,1.2)-- (6.24,1.2);
\draw [line width=1.pt] (6.24,1.2)-- (7.24,2.2);
\draw [line width=1.pt] (6.24,1.2)-- (5.98,0.2);
\draw [line width=1.pt] (4.42,1.2)-- (3.82,0.2);
\draw [line width=1.pt] (6.24,1.2)-- (7.28,0.2);
\draw [line width=1.pt] (2.94,1.2)-- (1.8,1.2);
\draw [line width=1.pt] (7.28,0.2)-- (8.66,0.2);
\begin{scriptsize}
\draw [fill=red] (2.94,1.2) circle (2.5pt);
\draw[color=blue] (3.22,1.57) node {$-2\sqrt{2}$};
\draw [fill=red] (4.42,1.2) circle (2.5pt);
\draw[color=blue] (4.48,1.57) node {$-2$};
\draw [fill=red] (6.24,1.2) circle (2.5pt);
\draw[color=blue] (6.0,1.63) node {$\sqrt{2}$};
\draw [fill=red] (7.24,2.2) circle (2.5pt);
\draw[color=blue] (7.34,1.57) node {$1$};
\draw [fill=red] (5.98,0.2) circle (2.5pt);
\draw[color=blue] (5.66,0.41) node {$1$};
\draw [fill=red] (7.28,0.2) circle (2.5pt);
\draw[color=blue] (7.36,0.69) node {$2$};
\draw [fill=red] (3.82,0.2) circle (2.5pt);
\draw[color=blue] (3.3,0.55) node {$-\sqrt{2}$};
\draw [fill=red] (1.8,1.2) circle (2.5pt);
\draw[color=blue] (1.7,1.51) node {$-2$};
\draw [fill=red] (8.66,0.2) circle (2.5pt);
\draw[color=blue] (9.28,0.67) node {$\sqrt{2}$};
\end{scriptsize}
\end{tikzpicture} \\
\hline					
$9$	& \begin{tikzpicture}[scale=.5,line cap=round,line join=round,>=triangle 45,x=1.0cm,y=1.0cm]
\draw [line width=1.pt] (2.94,1.2)-- (4.42,1.2);
\draw [line width=1.pt] (4.42,1.2)-- (6.24,1.2);
\draw [line width=1.pt] (6.24,1.2)-- (7.24,2.2);
\draw [line width=1.pt] (4.42,1.2)-- (3.82,0.2);
\draw [line width=1.pt] (6.24,1.2)-- (7.28,0.2);
\draw [line width=1.pt] (2.94,1.2)-- (1.8,1.2);
\draw [line width=1.pt] (7.28,0.2)-- (8.66,0.2);
\draw [line width=1.pt] (7.24,2.2)-- (8.7,2.2);
\begin{scriptsize}
\draw [fill=red] (2.94,1.2) circle (2.5pt);
\draw[color=blue] (2.8,1.57) node {$-2$};
\draw [fill=red] (4.42,1.2) circle (2.5pt);
\draw[color=blue] (4.84,1.57) node {$-\sqrt{2}$};
\draw [fill=red] (6.24,1.2) circle (2.5pt);
\draw[color=blue] (6.04,1.49) node {$1$};
\draw [fill=red] (7.24,2.2) circle (2.5pt);
\draw[color=blue] (6.5,2.05) node {$\sqrt{2}$};
\draw [fill=red] (7.28,0.2) circle (2.5pt);
\draw[color=blue] (6.5,0.39) node {$\sqrt{2}$};
\draw [fill=red] (3.82,0.2) circle (2.5pt);
\draw[color=blue] (3.32,0.29) node {$-1$};
\draw [fill=red] (1.8,1.2) circle (2.5pt);
\draw[color=blue] (1.7,1.51) node {$-\sqrt{2}$};
\draw [fill=red] (8.66,0.2) circle (2.5pt);
\draw[color=blue] (9.16,0.33) node {$1$};
\draw [fill=red] (8.7,2.2) circle (2.5pt);
\draw[color=blue] (9.2,2.01) node {$1$};
\end{scriptsize}
\end{tikzpicture}\\
\hline					
	\end{tabular} 
	\caption{Minimal trees of $\lambda=1$ and $\lambda=\sqrt{2}$ with at most $10$ vertices.}
\end{table}

%\subsection*{Some tables of some $\mathcal{LT}_{min,\mu}$}

\begin{table}[H]
\begin{tabular}{|c|c|c|c|}
	\hline 
Minimal Polynomial	& $\mu$  & $\mu$-L-minimal $0$-cut-tree & $\mu$-L-minimal $1$-cut-tree \\ 
	\hline 
	&&&\\
& & \begin{tikzpicture}[scale=.5,line cap=round,line join=round,>=triangle 45,x=1.0cm,y=1.0cm]
\draw [line width=1.pt] (5.14,1)-- (2.58,1);
\draw [line width=1.pt] (2.58,1)-- (2.6,0);
\draw [line width=1.pt] (5.14,1)-- (7.28,1);
\draw [line width=1.pt] (7.28,1)-- (5.66,0);
\draw [line width=1.pt] (7.28,1)-- (6.48,0);
\draw [line width=1.pt] (7.28,1)-- (7.18,0);
\draw [line width=1.pt] (7.28,1)-- (8.08,0);
\draw [line width=1.pt] (7.28,1)-- (8.9,0);
\draw [line width=1.pt] (8.9,0)-- (9.36,-1);
\draw [line width=1.pt] (8.9,0)-- (8.08,-1);
\draw [line width=1.pt] (2.6,0)-- (3.48,-1);
\draw [line width=1.pt] (2.6,0)-- (2.,-1);
\begin{scriptsize}
\draw [fill=red] (5.14,1) circle (2.5pt);
\draw [fill=red] (2.58,1) circle (2.5pt);
\draw [fill=red] (2.6,0) circle (2.5pt);
\draw [fill=red] (7.28,1) circle (2.5pt);
\draw [fill=red] (5.66,0) circle (2.5pt);
\draw [fill=red] (6.48,0) circle (2.5pt);
\draw [fill=red] (7.18,0) circle (2.5pt);
\draw [fill=red] (8.08,0) circle (2.5pt);
\draw [fill=red] (8.9,0) circle (2.5pt);
\draw [fill=red] (9.36,-1) circle (2.5pt);
\draw [fill=red] (8.08,-1) circle (2.5pt);
\draw [fill=red] (3.48,-1) circle (2.5pt);
\draw [fill=red] (2.,-1) circle (2.5pt);
\end{scriptsize}
\end{tikzpicture} & \begin{tikzpicture}[scale=.5,line cap=round,line join=round,>=triangle 45,x=1.0cm,y=1.0cm]

\draw [line width=1.pt] (1.82,0.2)-- (3.58,0.2);
\draw [line width=1.pt] (3.58,0.2)-- (4.5,0.2);
\begin{scriptsize}
\draw [fill=red] (1.82,0.2) circle (2.5pt);
\draw [fill=red] (3.58,0.2) circle (2.5pt);
\draw[] (4.5,0.2) node {\Large\RightScissors};
\end{scriptsize}
\end{tikzpicture} \\ 
$x^2 -3x+1$	& $\frac{3\pm \sqrt{5}}{2}$& \begin{tikzpicture}[scale=.5,line cap=round,line join=round,>=triangle 45,x=1.0cm,y=1.0cm]
\draw [line width=1.pt] (4.46,1.5)-- (2.58,1.5);
\draw [line width=1.pt] (2.58,1.5)-- (2.6,-0.0);
\draw [line width=1.pt] (8.94,1.5)-- (8.9,0.);
\draw [line width=1.pt] (8.9,0.)-- (9.36,-1.5);
\draw [line width=1.pt] (8.9,0.)-- (8.08,-1.5);
\draw [line width=1.pt] (2.6,-0.0)-- (3.48,-1.5);
\draw [line width=1.pt] (2.6,-0.0)-- (2.,-1.5);
\draw [line width=1.pt] (2.58,1.5)-- (3.72,0.0);
\draw [line width=1.pt] (8.94,1.5)-- (7.78,0.);
\draw [line width=1.pt] (4.46,1.5)-- (6.14,1.5);
\draw [line width=1.pt] (6.14,1.5)-- (7.36,1.5);
\draw [line width=1.pt] (7.36,1.5)-- (8.94,1.5);
\begin{scriptsize}
\draw [fill=red] (4.46,1.5) circle (2.5pt);
\draw [fill=red] (2.58,1.5) circle (2.5pt);
\draw [fill=red] (2.6,-0.0) circle (2.5pt);
\draw [fill=red] (8.94,1.5) circle (2.5pt);
\draw [fill=red] (8.9,0.) circle (2.5pt);
\draw [fill=red] (9.36,-1.5) circle (2.5pt);
\draw [fill=red] (8.08,-1.5) circle (2.5pt);
\draw [fill=red] (3.48,-1.5) circle (2.5pt);
\draw [fill=red] (2.,-1.5) circle (2.5pt);
\draw [fill=red] (3.72,0.0) circle (2.5pt);
\draw [fill=red] (7.78,0.) circle (2.5pt);
\draw [fill=red] (6.14,1.5) circle (2.5pt);
\draw [fill=red] (7.36,1.5) circle (2.5pt);
\end{scriptsize}
\end{tikzpicture}  & \begin{tikzpicture}[scale=.5,line cap=round,line join=round,>=triangle 45,x=1.0cm,y=1.0cm]
\draw [line width=1.pt] (3.52,1.2)-- (2.68,2.16);
\draw [line width=1.pt] (3.52,1.2)-- (2.76,0.2);
\draw [line width=1.pt] (7.26,1.2)-- (7.26,2.2);
\draw [line width=1.pt] (7.26,1.2)-- (8.26,1.2);
\draw [line width=1.pt] (5.38,1.2)-- (3.52,1.2);
\draw [line width=1.pt] (5.38,1.2)-- (5.36,0.2);
\draw [line width=1.pt] (5.38,1.2)-- (7.26,1.2);
\draw [line width=1.pt] (7.26,1.2)-- (7.26,0.2);
\begin{scriptsize}
\draw [fill=red] (3.52,1.2) circle (2.5pt);
\draw [fill=red] (7.26,1.2) circle (2.5pt);
\draw [fill=red] (2.68,2.2) circle (2.5pt);
\draw [fill=red] (2.76,0.2) circle (2.5pt);
\draw [fill=red] (7.26,2.2) circle (2.5pt);
\draw [fill=red] (7.26,0.2) circle (2.5pt);
\draw [fill=red] (5.38,1.2) circle (2.5pt);
\draw [fill=red] (5.36,0.2) circle (2.5pt);
%\draw [fill=red] (8.3,1.2) circle (2.5pt);
\draw[] (8.3,1.2) node {\Large\RightScissors};
\end{scriptsize}
\end{tikzpicture} \\
&& \begin{tikzpicture}[scale=.5,line cap=round,line join=round,>=triangle 45,x=1.0cm,y=1.0cm]
\draw [line width=1.pt] (2.6,1.5)-- (2.6,-0.);
\draw [line width=1.pt] (8.94,1.5)-- (8.9,0.);
\draw [line width=1.pt] (8.9,0.)-- (9.36,-1.5);
\draw [line width=1.pt] (8.9,0.)-- (8.08,-1.5);
\draw [line width=1.pt] (2.6,-0.0)-- (3.48,-1.5);
\draw [line width=1.pt] (2.6,-0.0)-- (2.,-1.5);
\draw [line width=1.pt] (2.6,1.5)-- (3.72,0.0);
\draw [line width=1.pt] (8.94,1.5)-- (7.78,0.);
\draw [line width=1.pt] (2.58,1.5)-- (6.,1.5);
\draw [line width=1.pt] (6.,1.5)-- (8.94,1.5);
\draw [line width=1.pt] (6.,1.5)-- (6,0.0);
\draw [line width=1.pt] (2.6,1.5)-- (1.68,0.);
\draw [line width=1.pt] (8.94,1.5)-- (9.98,0.);
\begin{scriptsize}
\draw [fill=red] (2.6,1.5) circle (2.5pt);
\draw [fill=red] (2.6,-0.0) circle (2.5pt);
\draw [fill=red] (8.94,1.5) circle (2.5pt);
\draw [fill=red] (8.9,0.) circle (2.5pt);
\draw [fill=red] (9.36,-1.5) circle (2.5pt);
\draw [fill=red] (8.08,-1.5) circle (2.5pt);
\draw [fill=red] (3.48,-1.5) circle (2.5pt);
\draw [fill=red] (2.,-1.5) circle (2.5pt);
\draw [fill=red] (3.72,0.0) circle (2.5pt);
\draw [fill=red] (7.78,0.) circle (2.5pt);
\draw [fill=red] (6.,1.5) circle (2.5pt);
\draw [fill=red] (6,0.0) circle (2.5pt);
\draw [fill=red] (1.68,0.) circle (2.5pt);
\draw [fill=red] (9.98,0.) circle (2.5pt);
\end{scriptsize}
\end{tikzpicture} & \begin{tikzpicture}[scale=.5,line cap=round,line join=round,>=triangle 45,x=1.0cm,y=1.0cm]
\draw [line width=1.pt] (3.52,1.2)-- (2.68,2.16);
\draw [line width=1.pt] (3.52,1.2)-- (2.76,0.2);
\draw [line width=1.pt] (7.26,1.2)-- (8.08,2.2);
\draw [line width=1.pt] (7.26,1.2)-- (8.26,0.2);
\draw [line width=1.pt] (5.38,1.2)-- (3.52,1.2);
\draw [line width=1.pt] (5.38,1.2)-- (5.36,0.2);
\draw [line width=1.pt] (5.38,1.2)-- (7.26,1.2);
\draw [line width=1.pt] (8.08,2.2)-- (9.0,2.2);
\begin{scriptsize}
\draw [fill=red] (3.52,1.2) circle (2.5pt);
\draw [fill=red] (7.26,1.2) circle (2.5pt);
\draw [fill=red] (2.68,2.2) circle (2.5pt);
\draw [fill=red] (2.76,0.2) circle (2.5pt);
\draw [fill=red] (8.08,2.2) circle (2.5pt);
\draw [fill=red] (8.26,0.2) circle (2.5pt);
\draw [fill=red] (5.38,1.2) circle (2.5pt);
\draw [fill=red] (5.36,0.2) circle (2.5pt);
%\draw [fill=red] (8.3,1.2) circle (2.5pt);
\draw[] (9.0,2.2) node {\Large\RightScissors};
\end{scriptsize}
\end{tikzpicture} \\
	\hline 
		&&&\\
& & \begin{tikzpicture}[scale=.5,line cap=round,line join=round,>=triangle 45,x=1.0cm,y=1.0cm]
\draw [line width=1.pt] (1.18,-0.4)-- (2.56,-0.4);
\draw [line width=1.pt] (2.56,-0.4)-- (3.92,-0.4);
\draw [line width=1.pt] (3.92,-0.4)-- (5.34,-0.4);
\draw [line width=1.pt] (5.34,-0.4)-- (6.8,-0.4);
\draw [line width=1.pt] (6.8,-0.4)-- (7.98,-0.4);
\begin{scriptsize}
\draw [fill=red] (1.18,-0.4) circle (2.5pt);
\draw [fill=red] (2.56,-0.4) circle (2.5pt);
\draw [fill=red] (3.92,-0.4) circle (2.5pt);
\draw [fill=red] (5.34,-0.4) circle (2.5pt);
\draw [fill=red] (6.8,-0.4) circle (2.5pt);
\draw [fill=red] (7.98,-0.4) circle (2.5pt);
\end{scriptsize}
\end{tikzpicture} & \begin{tikzpicture}[scale=.5,line cap=round,line join=round,>=triangle 45,x=1.0cm,y=1.0cm]
\draw [line width=1.pt] (2.06,0.2)-- (3.58,0.2);
\draw [line width=1.pt] (2.06,0.2)-- (0.58,0.2);
\draw [line width=1.pt] (2.06,0.2)-- (2.06,1.0);
\begin{scriptsize}
\draw [fill=red] (2.06,0.2) circle (2.5pt);
\draw [fill=red] (3.58,0.2) circle (2.5pt);
\draw [fill=red] (0.58,0.2) circle (2.5pt);
\draw[] (2.06,1.0) node {\Large\RightScissors};
\end{scriptsize}
\end{tikzpicture} \\ 
$x^2 -4x+1$	& $2\pm \sqrt{3}$& \begin{tikzpicture}[scale=.5,line cap=round,line join=round,>=triangle 45,x=1.0cm,y=1.0cm]
\draw [line width=1.pt] (2.94,1.2)-- (4.42,1.2);
\draw [line width=1.pt] (4.42,1.2)-- (6.34,1.2);
\draw [line width=1.pt] (6.34,1.2)-- (7.24,2.2);
\draw [line width=1.pt] (6.34,1.2)-- (5.98,0.2);
\draw [line width=1.pt] (6.34,1.2)-- (6.04,2.2);
\draw [line width=1.pt] (4.42,1.2)-- (3.82,0.2);
\draw [line width=1.pt] (6.34,1.2)-- (7.28,0.2);
\draw [line width=1.pt] (2.94,1.2)-- (1.8,1.2);
\begin{scriptsize}
\draw [fill=red] (2.94,1.2) circle (2.5pt);

\draw [fill=red] (4.42,1.2) circle (2.5pt);

\draw [fill=red] (6.34,1.2) circle (2.5pt);

\draw [fill=red] (7.24,2.2) circle (2.5pt);

\draw [fill=red] (5.98,0.2) circle (2.5pt);

\draw [fill=red] (7.28,0.2) circle (2.5pt);

\draw [fill=red] (6.04,2.2) circle (2.5pt);

\draw [fill=red] (3.82,0.2) circle (2.5pt);

\draw [fill=red] (1.8,1.2) circle (2.5pt);

\end{scriptsize}
\end{tikzpicture} &   \begin{tikzpicture}[scale=.5,line cap=round,line join=round,>=triangle 45,x=1.0cm,y=1.0cm]
\draw [line width=1.pt] (2.94,1.2)-- (4.42,1.2);
\draw [line width=1.pt] (4.42,1.2)-- (6.24,1.2);
\draw [line width=1.pt] (6.24,1.2)-- (7.24,2.2);
\draw [line width=1.pt] (6.24,1.2)-- (5.98,0.2);
\draw [line width=1.pt] (4.98,0.2)-- (5.98,0.2);
\draw [line width=1.pt] (4.42,1.2)-- (3.82,0.2);
\draw [line width=1.pt] (6.24,1.2)-- (7.28,0.2);
\draw [line width=1.pt] (2.94,1.2)-- (1.8,1.2);
\draw [line width=1.pt] (0.8,1.2)-- (1.8,1.2);
%\draw [line width=1.pt] (3.82,0.2)-- (2.8,0.2);
\begin{scriptsize}
\draw [fill=red] (2.94,1.2) circle (2.5pt);

\draw [fill=red] (4.42,1.2) circle (2.5pt);

\draw [fill=red] (6.24,1.2) circle (2.5pt);

\draw [fill=red] (7.24,2.2) circle (2.5pt);

\draw [fill=red] (5.98,0.2) circle (2.5pt);
%\draw [fill=red] (4.98,0.2) circle (2.5pt);

\draw [fill=red] (7.28,0.2) circle (2.5pt);

\draw [fill=red] (3.82,0.2) circle (2.5pt);

\draw [fill=red] (1.8,1.2) circle (2.5pt);
\draw [fill=red] (0.8,1.2) circle (2.5pt);
\draw[] (4.98,0.2) node {\Large\LeftScissors};
%\draw [fill=red] (2.8,0.2) circle (2.5pt);

\end{scriptsize}
\end{tikzpicture}\\
&& \begin{tikzpicture}[scale=.5,line cap=round,line join=round,>=triangle 45,x=1.0cm,y=1.0cm]
\draw [line width=1.pt] (2.94,1.2)-- (4.42,1.2);
\draw [line width=1.pt] (4.42,1.2)-- (6.24,1.2);
\draw [line width=1.pt] (6.24,1.2)-- (7.24,2.2);
\draw [line width=1.pt] (6.24,1.2)-- (5.98,0.2);
\draw [line width=1.pt] (4.42,1.2)-- (3.82,0.2);
\draw [line width=1.pt] (6.24,1.2)-- (7.28,0.2);
\draw [line width=1.pt] (2.94,1.2)-- (1.8,1.2);
\draw [line width=1.pt] (3.82,0.2)-- (2.8,0.2);
\begin{scriptsize}
\draw [fill=red] (2.94,1.2) circle (2.5pt);

\draw [fill=red] (4.42,1.2) circle (2.5pt);

\draw [fill=red] (6.24,1.2) circle (2.5pt);

\draw [fill=red] (7.24,2.2) circle (2.5pt);

\draw [fill=red] (5.98,0.2) circle (2.5pt);

\draw [fill=red] (7.28,0.2) circle (2.5pt);

\draw [fill=red] (3.82,0.2) circle (2.5pt);

\draw [fill=red] (1.8,1.2) circle (2.5pt);

\draw [fill=red] (2.8,0.2) circle (2.5pt);

\end{scriptsize}
\end{tikzpicture} & \begin{tikzpicture}[scale=.5,line cap=round,line join=round,>=triangle 45,x=1.0cm,y=1.0cm]
\draw [line width=1.pt] (2.94,1.2)-- (4.42,1.2);
\draw [line width=1.pt] (4.42,1.2)-- (6.24,1.2);
\draw [line width=1.pt] (6.24,1.2)-- (7.24,2.2);
\draw [line width=1.pt] (6.24,1.2)-- (5.98,0.2);
\draw [line width=1.pt] (4.98,0.2)-- (5.98,0.2);
\draw [line width=1.pt] (4.42,1.2)-- (3.82,0.2);
\draw [line width=1.pt] (6.24,1.2)-- (7.28,0.2);
\draw [line width=1.pt] (2.94,1.2)-- (1.8,1.2);
\draw [line width=1.pt] (3.82,0.2)-- (2.8,0.2);
\begin{scriptsize}
\draw [fill=red] (2.94,1.2) circle (2.5pt);

\draw [fill=red] (4.42,1.2) circle (2.5pt);

\draw [fill=red] (6.24,1.2) circle (2.5pt);

\draw [fill=red] (7.24,2.2) circle (2.5pt);

\draw [fill=red] (5.98,0.2) circle (2.5pt);
\draw [fill=red] (4.98,0.2) circle (2.5pt);

\draw [fill=red] (7.28,0.2) circle (2.5pt);

\draw [fill=red] (3.82,0.2) circle (2.5pt);

\draw [fill=red] (1.8,1.2) circle (2.5pt);

%\draw [fill=red] (2.8,0.2) circle (2.5pt);
\draw[] (2.8,0.2) node {\Large\LeftScissors};
\end{scriptsize}
\end{tikzpicture} \\
	\hline 
		&&&\\
&& \begin{tikzpicture}[scale=.5,line cap=round,line join=round,>=triangle 45,x=1.0cm,y=1.0cm]

\draw [line width=1.pt] (1.24,-1.4)-- (2.8,-1.4);
\draw [line width=1.pt] (2.8,-1.4)-- (3.92,-0.4);
\draw [line width=1.pt] (3.92,-0.4)-- (5.34,-0.4);
\draw [line width=1.pt] (5.34,-0.4)-- (6.28,-1.4);
\draw [line width=1.pt] (6.28,-1.4)-- (8.1,-1.4);
\draw [line width=1.pt] (1.18,0.8)-- (2.86,0.8);
\draw [line width=1.pt] (6.5,0.8)-- (8.12,0.8);
\draw [line width=1.pt] (2.86,0.8)-- (3.92,-0.4);
\draw [line width=1.pt] (6.5,0.8)-- (5.34,-0.4);
\begin{scriptsize}
\draw [fill=red] (1.24,-1.4) circle (2.5pt);
\draw [fill=red] (2.8,-1.4) circle (2.5pt);
\draw [fill=red] (3.92,-0.4) circle (2.5pt);
\draw [fill=red] (5.34,-0.4) circle (2.5pt);
\draw [fill=red] (6.28,-1.4) circle (2.5pt);
\draw [fill=red] (8.1,-1.4) circle (2.5pt);
\draw [fill=red] (1.18,0.8) circle (2.5pt);
\draw [fill=red] (2.86,0.8) circle (2.5pt);
\draw [fill=red] (6.5,0.8) circle (2.5pt);
\draw [fill=red] (8.12,0.8) circle (2.5pt);
\end{scriptsize}
\end{tikzpicture} &  \begin{tikzpicture}[scale=.5,line cap=round,line join=round,>=triangle 45,x=1.0cm,y=1.0cm]
\draw [line width=1.pt] (2.06,0.2)-- (3.58,0.2);
\draw [line width=1.pt] (2.06,0.2)-- (0.58,0.2);
\draw [line width=1.pt] (2.06,0.2)-- (2.06,1.0);
\draw [line width=1.pt] (2.06,0.2)-- (2.06,-0.8);
\begin{scriptsize}
\draw [fill=red] (2.06,0.2) circle (2.5pt);
\draw [fill=red] (3.58,0.2) circle (2.5pt);
\draw [fill=red] (0.58,0.2) circle (2.5pt);
\draw [fill=red] (2.06,-0.8) circle (2.5pt);
\draw[] (2.06,1.0) node {\Large\RightScissors};
\end{scriptsize}
\end{tikzpicture}\\ 
$x^2 -5x+1$	& $\frac{5\pm \sqrt{21}}{2}$ & \begin{tikzpicture}[scale=.5,line cap=round,line join=round,>=triangle 45,x=1.0cm,y=1.0cm]

\draw [line width=1.pt] (4.5,-0.0)-- (3.7,-0.0);
\draw [line width=1.pt] (4.48,0.8)-- (5.36,2.3);
\draw [line width=1.pt] (1.18,0.8)-- (2.86,0.8);
\draw [line width=1.pt] (6.5,0.8)-- (8.12,0.8);
\draw [line width=1.pt] (2.86,0.8)-- (3.7,-0.0);
\draw [line width=1.pt] (6.5,0.8)-- (5.38,-0.0);
\draw [line width=1.pt] (2.86,0.8)-- (2.88,2.3);
\draw [line width=1.pt] (6.5,0.8)-- (6.52,2.3);
\draw [line width=1.pt] (4.5,-0.0)-- (5.38,-0.0);
\draw [line width=1.pt] (4.48,0.8)-- (4.1,2.3);
\draw [line width=1.pt] (4.48,0.8)-- (4.5,-0.0);
\begin{scriptsize}
\draw [fill=red] (4.5,-0.0) circle (2.5pt);
\draw [fill=red] (3.7,-0.0) circle (2.5pt);
\draw [fill=red] (5.38,-0.0) circle (2.5pt);
\draw [fill=red] (4.48,0.8) circle (2.5pt);
\draw [fill=red] (5.36,2.3) circle (2.5pt);
\draw [fill=red] (1.18,0.8) circle (2.5pt);
\draw [fill=red] (2.86,0.8) circle (2.5pt);
\draw [fill=red] (6.5,0.8) circle (2.5pt);
\draw [fill=red] (8.12,0.8) circle (2.5pt);
\draw [fill=red] (2.88,2.3) circle (2.5pt);
\draw [fill=red] (6.52,2.3) circle (2.5pt);
\draw [fill=red] (4.1,2.3) circle (2.5pt);
\end{scriptsize}
\end{tikzpicture} & \begin{tikzpicture}[scale=.5,line cap=round,line join=round,>=triangle 45,x=1.0cm,y=1.0cm]
\draw [line width=1.pt] (2.94,1.2)-- (4.42,1.2);
\draw [line width=1.pt] (2.94,1.2)-- (2.94,0.2);
\draw [line width=1.pt] (4.42,1.2)-- (6.24,1.2);
\draw [line width=1.pt] (6.24,1.2)-- (7.24,2.2);
\draw [line width=1.pt] (4.42,1.2)-- (4.42,0.2);
\draw [line width=1.pt] (6.24,1.2)-- (7.28,0.2);
\draw [line width=1.pt] (2.94,1.2)-- (1.8,1.2);
\draw [line width=1.pt] (7.28,0.2)-- (8.66,0.2);
\draw [line width=1.pt] (7.24,2.2)-- (8.7,2.2);
\begin{scriptsize}
\draw [fill=red] (2.94,1.2) circle (2.5pt);
\draw [fill=red] (2.94,0.2) circle (2.5pt);
\draw [fill=red] (4.42,1.2) circle (2.5pt);
\draw [fill=red] (6.24,1.2) circle (2.5pt);
\draw [fill=red] (7.24,2.2) circle (2.5pt);
\draw [fill=red] (7.28,0.2) circle (2.5pt);
%\draw [fill=red] (3.82,0.2) circle (2.5pt);
\draw [fill=red] (1.8,1.2) circle (2.5pt);
\draw [fill=red] (8.66,0.2) circle (2.5pt);
\draw [fill=red] (8.7,2.2) circle (2.5pt);
\draw[] (4.42,0.2) node {\Large\RightScissors};
\end{scriptsize}
\end{tikzpicture}  \\
&& \begin{tikzpicture}[scale=.5,line cap=round,line join=round,>=triangle 45,x=1.0cm,y=1.0cm]

\draw [line width=1.pt] (4.86,3.2)-- (2.6,1.9);
\draw [line width=1.pt] (2.6,1.9)-- (2.6,0.6);
\draw [line width=1.pt] (4.86,3.2)-- (3.92,1.9);
\draw [line width=1.pt] (3.92,1.9)-- (3.92,0.6);
\draw [line width=1.pt] (4.9,3.2)-- (4.9,1.9);
\draw [line width=1.pt] (4.9,3.2)-- (7.28,1.9);
\draw [line width=1.pt] (7.28,1.9)-- (5.5,0.6);
\draw [line width=1.pt] (7.28,1.9)-- (6.5,0.6);
\draw [line width=1.pt] (7.28,1.9)-- (7.18,0.6);
\draw [line width=1.pt] (7.28,1.9)-- (8.08,0.6);
\draw [line width=1.pt] (7.28,1.9)-- (8.9,0.6);
\draw [line width=1.pt] (7.28,1.9)-- (9.9,0.6);
\begin{scriptsize}
\draw [fill=red] (4.9,3.2) circle (2.5pt);
\draw [fill=red] (2.6,1.9) circle (2.5pt);
\draw [fill=red] (2.6,0.6) circle (2.5pt);
\draw [fill=red] (3.92,1.9) circle (2.5pt);
\draw [fill=red] (3.92,0.6) circle (2.5pt);
\draw [fill=red] (4.9,1.9) circle (2.5pt);
\draw [fill=red] (7.28,1.9) circle (2.5pt);
\draw [fill=red] (5.5,0.6) circle (2.5pt);
\draw [fill=red] (6.5,0.6) circle (2.5pt);
\draw [fill=red] (7.18,0.6) circle (2.5pt);
\draw [fill=red] (8.08,0.6) circle (2.5pt);
\draw [fill=red] (8.9,0.6) circle (2.5pt);
\draw [fill=red] (9.9,0.6) circle (2.5pt);
\end{scriptsize}
\end{tikzpicture} & \begin{tikzpicture}[scale=.6,line cap=round,line join=round,>=triangle 45,x=1.0cm,y=1.0cm]
\draw [line width=1.pt] (7.,3.)-- (7.,2.);
\draw [line width=1.pt] (7.,3.)-- (6.,3.);
\draw [line width=1.pt] (7.,3.)-- (8.,3.);
\draw [line width=1.pt] (6.,3.)-- (6.,2.);
\draw [line width=1.pt] (7.,2.)-- (7.,1.);
\draw [line width=1.pt] (7.,1.)-- (6.,1.);
\draw [line width=1.pt] (7.,1.)-- (7.,0.);
\draw [line width=1.pt] (8.,3.)-- (9.,3.);
\draw [line width=1.pt] (8.,3.)-- (8.,2.);
\draw [line width=1.pt] (8.,2.)-- (8.,1.);
\draw [line width=1.pt] (8.,1.)-- (8.,0.);
\begin{scriptsize}
\draw [fill=red] (7.,3.) circle (2.5pt);
\draw [fill=red] (7.,2.) circle (2.5pt);
\draw [fill=red] (6.,3.) circle (2.5pt);
\draw [fill=red] (8.,3.) circle (2.5pt);
\draw [fill=red] (6.,2.) circle (2.5pt);
\draw [fill=red] (7.,1.) circle (2.5pt);
\draw [fill=red] (6.,1.) circle (2.5pt);
\draw [fill=red] (7.,0.) circle (2.5pt);
\draw [fill=red] (9.,3.) circle (2.5pt);
\draw [fill=red] (8.,2.) circle (2.5pt);
\draw [fill=red] (8.,1.) circle (2.5pt);
\draw[] (8,0) node {\Large\RightScissors};
\end{scriptsize}
\end{tikzpicture} \\
	\hline 
\end{tabular} 

	\caption{Some $\mu$-L-minimal $0,1$-cut-trees.}
\end{table}

\begin{table}[H]
	\begin{tabular}{|c|c|}
		\hline 
		$n$	& Tree \\ 
		\hline 
	$2$	& \begin{tikzpicture}[scale=.5,line cap=round,line join=round,>=triangle 45,x=1.0cm,y=1.0cm]
	\draw [line width=1.pt] (4.42,1.2)-- (6.24,1.2);
	\begin{scriptsize}
	\draw [fill=red] (4.42,1.2) circle (2.5pt);
	\draw [fill=red] (6.24,1.2) circle (2.5pt);
	\end{scriptsize}
	\end{tikzpicture} \\ 
		\hline 
	$10$	& \begin{tikzpicture}[scale=.5,line cap=round,line join=round,>=triangle 45,x=1.0cm,y=1.0cm]
	\draw [line width=1.pt] (4.28,1.4)-- (2.7,0.4);
	\draw [line width=1.pt] (4.28,1.4)-- (4.34,0.4);
	\draw [line width=1.pt] (4.28,1.4)-- (5.92,0.4);
	\draw [line width=1.pt] (2.7,0.4)-- (1.92,-0.4);
	\draw [line width=1.pt] (2.7,0.4)-- (3.22,-0.4);
	\draw [line width=1.pt] (4.34,0.4)-- (3.98,-0.4);
	\draw [line width=1.pt] (4.34,0.4)-- (4.96,-0.4);
	\draw [line width=1.pt] (5.92,0.4)-- (5.64,-0.4);
	\draw [line width=1.pt] (5.92,0.4)-- (6.6,-0.4);
	\begin{scriptsize}
	\draw [fill=red] (4.28,1.4) circle (2.5pt);
	\draw [fill=red] (2.7,0.4) circle (2.5pt);
	\draw [fill=red] (4.34,0.4) circle (2.5pt);
	\draw [fill=red] (5.92,0.4) circle (2.5pt);
	\draw [fill=red] (1.92,-0.4) circle (2.5pt);
	\draw [fill=red] (3.22,-0.4) circle (2.5pt);
	\draw [fill=red] (3.98,-0.4) circle (2.5pt);
	\draw [fill=red] (4.96,-0.4) circle (2.5pt);
	\draw [fill=red] (5.64,-0.4) circle (2.5pt);
	\draw [fill=red] (6.6,-0.4) circle (2.5pt);
	\end{scriptsize}
	\end{tikzpicture} \\ 
		\hline 
	$12$	& \begin{tikzpicture}[scale=.5,line cap=round,line join=round,>=triangle 45,x=1.0cm,y=1.0cm]
	\draw [line width=1.pt] (5.06,1.2)-- (4.34,0.2);
	\draw [line width=1.pt] (5.06,1.2)-- (5.92,0.2);
	\draw [line width=1.pt] (2.7,0.2)-- (1.92,-0.9);
	\draw [line width=1.pt] (2.7,0.2)-- (3.22,-0.9);
	\draw [line width=1.pt] (4.34,0.2)-- (3.98,-0.9);
	\draw [line width=1.pt] (4.34,0.2)-- (4.96,-0.9);
	\draw [line width=1.pt] (5.92,0.2)-- (5.64,-0.9);
	\draw [line width=1.pt] (5.92,0.2)-- (6.6,-0.9);
	\draw [line width=1.pt] (2.7,0.2)-- (3.28,1.2);
	\draw [line width=1.pt] (3.28,1.2)-- (4.22,1.9);
	\draw [line width=1.pt] (4.22,1.9)-- (5.06,1.2);
	\begin{scriptsize}
	\draw [fill=red] (5.06,1.2) circle (2.5pt);
	\draw [fill=red] (2.7,0.2) circle (2.5pt);
	\draw [fill=red] (4.34,0.2) circle (2.5pt);
	\draw [fill=red] (5.92,0.2) circle (2.5pt);
	\draw [fill=red] (1.92,-0.9) circle (2.5pt);
	\draw [fill=red] (3.22,-0.9) circle (2.5pt);
	\draw [fill=red] (3.98,-0.9) circle (2.5pt);
	\draw [fill=red] (4.96,-0.9) circle (2.5pt);
	\draw [fill=red] (5.64,-0.9) circle (2.5pt);
	\draw [fill=red] (6.6,-0.9) circle (2.5pt);
	\draw [fill=red] (3.28,1.2) circle (2.5pt);
	\draw [fill=red] (4.22,1.9) circle (2.5pt);
	\end{scriptsize}
	\end{tikzpicture} \\ 
		\hline 
		
	\end{tabular} 
	
	\caption{$2$-L-minimal $0$-cut-trees with at most $12$ vertices.}
\end{table}

%% The Appendices part is started with the command \appendix;
%% appendix sections are then done as normal sections
%% \appendix

%% \section{}
%% \label{}

%% References
%%
%% Following citation commands can be used in the body text:
%% Usage of \cite is as follows:
%%   \cite{key}          ==>>  [#]
%%   \cite[chap. 2]{key} ==>>  [#, chap. 2]
%%   \citet{key}         ==>>  Author [#]

%% References with bibTeX database:

\section*{Acknowledgements}
We would like to thank the unknown referees for their valuable comments. The first author is indebted to Iran National Science Foundation (INSF) for supporting this research under grant number 95005902. The second author is indebted to the School of Mathematics, Institute for Research in Fundamental
Sciences (IPM), Tehran, Iran for the support. The research of the second author was in part supported by grant from IPM (No. 99050211).

\bibliographystyle{model1-num-names}
\bibliography{<your-bib-database>}

%% Authors are advised to submit their bibtex database files. They are
%% requested to list a bibtex style file in the manuscript if they do
%% not want to use model1-num-names.bst.

%% References without bibTeX database:

\end{document}